\newtheorem{theorem}{Theorem}[section]
\newtheorem{corollary}[theorem]{Corollary}
\newtheorem{conjecture}[theorem]{Conjecture}
\newtheorem{lemma}[theorem]{Lemma}
\newtheorem{proposition}[theorem]{Proposition}
\newtheorem{problem}[theorem]{Problem}
\begin{document}

\title{A spectral Erd\H{o}s--Faudree--Rousseau theorem}

\author{Yongtao Li$^{1}$\quad 
 Lihua Feng$^{1,}$\footnote{Corresponding authors. 
This paper was published on  Journal of Graph Theory. \\ 
E-mail addresses:  
\url{ytli0921@hnu.edu.cn} (Y. Li), 
\url{fenglh@163.com} (L. Feng), 
\url{ypeng1@hnu.edu.cn} (Y. Peng)}
\quad
 Yuejian Peng$^{2,*}$  \\
{\small $^{1}$School of Mathematics and Statistics,  Central South University, Changsha, China} \\ 
 {\small $^{2}$School of Mathematics, Hunan University, Changsha, China }  
 }

\date{\today}

\maketitle

\vspace{-0.8cm}

\begin{abstract} 
A well-known theorem of Mantel states that every 
$n$-vertex graph with more than $\lfloor n^2/4\rfloor $ edges contains a triangle. An interesting problem in extremal graph theory 
studies the minimum number of edges contained in triangles among
graphs with a prescribed number of vertices and edges. 
 Erd\H{o}s, Faudree and Rousseau (1992) showed that 
a graph on $n$ vertices with more than $\lfloor n^2/4\rfloor $ edges 
contains at least $2\lfloor n/2\rfloor +1$ edges in triangles. 
Such edges are called triangular edges. 
In this paper, we present a spectral version of 
the result of Erd\H{o}s, Faudree and Rousseau. 
Using the supersaturation-stability and the spectral technique, 
we prove that  every $n$-vertex graph 
$G$ with  $\lambda (G) \ge \sqrt{\lfloor n^2/4\rfloor}$ contains at least $2 \lfloor {n}/{2} \rfloor -1$ triangular edges, unless 
$G$ is a balanced complete bipartite graph. 
The method in our paper has some interesting applications. 
Firstly, the supersaturation-stability can be used to revisit  a conjecture of Erd\H{o}s concerning with the booksize of a graph, which was initially proved by Edwards (unpublished), and independently by Khad\v{z}iivanov and Nikiforov (1979). 
Secondly, our method can improve the bound on the order $n$ of the spectral extremal graph when we forbid the friendship graph as a substructure. We drop the condition that requires the order $n$ to be sufficiently large, which was investigated by Cioab\u{a}, Feng, Tait and Zhang (2020) using the triangle removal lemma. 
Thirdly, this method can be utilized to deduce the classical stability 
for odd cycles and it gives more concise bounds on parameters. 
Finally, the supersaturation-stability could be applied to 
deal with the spectral graph problems on counting triangles, 
which was recently studied by Ning and Zhai (2023). 
 \end{abstract}

{{\bf Key words:} Extremal graph theory;  triangular edges; spectral  radius. }

{{\bf 2010 Mathematics Subject Classification.}  05C35; 05C50.}


\section{Introduction} 
Extremal  combinatorics is increasingly becoming 
a fascinating mathematical discipline as well as an essential
component of many mathematical areas, and it has experienced an impressive
growth in recent years. 
Extremal combinatorics concerns the problems  
of determining the maximal or the minimal size of a combinatorial object that satisfies certain properties. 
One of the most important problems is the 
so-called Tur\'{a}n-type problem, which has played an important role in the development 
of extremal combinatorics.  
More precisely,  
the Tur\'{a}n-type questions usually 
study the maximum possible number of edges in
a graph that does not contain a specific subgraph. 
Such kind of questions could be 
viewed as the cornerstone of extremal graph theory and have been studied
extensively in the literature.  

\medskip 
A graph $G$ is {\it $F$-free} if it does not contain a subgraph isomorphic to $F$. 
  For example, every bipartite graph is triangle-free. 
A classical result in extremal graph theory is Mantel's  theorem \cite{Bollobas78}, 
which  asserts that every triangle-free graph on $n$ 
vertices contains at most 
$\lfloor n^2/4\rfloor$ edges. 
This result is tight 
by considering the bipartite Tur\'{a}n graph $T_{n,2}$, where 
$T_{n,2}$ is a complete bipartite graph whose 
two vertex parts have sizes as equal as possible.  
Equivalently, each graph on $n$ vertices
with more than $\lfloor n^2/4\rfloor $ edges must contain a triangle. 

\medskip 
There are several results in the literature that guarantee something stronger than just one triangle. 
 For example, in 1941, 
 Rademacher (unpublished paper, see Erd\H{o}s \cite{Erd1955,Erdos1964}) proved that such graphs contain at least $\lfloor n/2\rfloor$ triangles. 
 After this result, 
Erd\H{o}s \cite{Erd1962a,Erd1962b}
 showed that there exists a small constant
 $c>0$ such that if $n$ is large enough and $1\le q<cn$, then
 every $n$-vertex graph with $\lfloor n^2/4\rfloor +q$
 edges   has at least $q\lfloor {n}/{2}\rfloor$ triangles.
Furthermore, Erd\H{o}s conjectured the constant $c={1}/{2}$, which was finally confirmed by
Lov\'{a}sz and Simonovits \cite{LS1975,LS1983} 
in 1975. They  proved that 
if $1\le q <{n}/{2}$ is a positive integer and $G$ is an $n$-vertex graph with
$e(G)\ge \lfloor {n^2}/{4} \rfloor + q$,  
then $G$ contains at least $q \lfloor {n}/{2}\rfloor $
triangles.  
We refer the readers to \cite{XK2021,LM2022-Erd-Rad,BC2023}
for  recent generalizations on the Erd\H{o}s--Rademacher problem. 
Moreover, Lov\'{a}sz and Simonovits \cite{LS1983} 
also studied the supersaturation problem 
for cliques in the case $q=o(n^2)$. 
For $q=\Omega (n^2)$,  
this problem turns out to be notoriously difficult.
Some recent progress was presented by 
Razborov \cite{Raz2008}, Nikiforov \cite{Niki2011},  Reiher \cite{Rei2016}, Liu, Pikhurko and Staden 
\cite{LPS2020}. In addition, the supersaturation 
problems for color-critical graphs were studied by Mubayi \cite{Mub2010}, and Pikhurko and Yilma \cite{PY2017}.

 \subsection{Minimizing the number of triangular edges}

 In this paper, we shall consider the supersaturation problem 
from a different point of view.  
 An edge is called {\it triangular} if it is contained in a triangle. 
 We shall consider the problem on counting 
 the number of triangular edges, rather than the number of triangles. 
The first result was obtained by 
 Erd\H{o}s, Faudree and Rousseau \cite{EFR1992}, who 
provided a tight bound on 
the number of triangular edges in any 
 $n$-vertex graph 
with more than $\lfloor n^2/4\rfloor$ edges.

\begin{theorem}[Erd\H{o}s--Faudree--Rousseau, 1992] 
\label{thm-EFR}
Let $G$ be a graph with $n$ vertices and 
\[ e(G)> e(T_{n,2}).\] 
Then $G$ has at least $2 \lfloor \frac{n}{2} \rfloor +1$ triangular edges. 
\end{theorem}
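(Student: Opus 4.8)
The plan is to use the following dichotomy for a graph $G$ with $e(G)>\lfloor n^2/4\rfloor$: either $G$ is dense enough that \emph{every} edge lies in a triangle, or $G$ is structurally close to the extremal graph $T_{n,2}$, in which case the extra edges it carries over $T_{n,2}$ force many triangular edges. Write $t$ for the number of triangular edges of $G$ and suppose towards a contradiction that $t\le 2\lfloor n/2\rfloor$. The first point is that the graph $F$ obtained from $G$ by deleting all of its triangular edges is triangle-free: a triangle of $F$ would be a triangle of $G$ all three of whose edges are triangular, hence none of them deleted. By Mantel's theorem $e(F)\le\lfloor n^2/4\rfloor$, so $e(F)=e(G)-t\ge\lfloor n^2/4\rfloor+1-t\ge\lfloor n^2/4\rfloor-n+1$, and thus $F$ is a near-extremal triangle-free graph.

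Now invoke the stability version of Mantel's theorem: a triangle-free graph on $n$ vertices with $\lfloor n^2/4\rfloor-k$ edges has a bipartition $(X,Y)$ with only $O(k)$ edges inside the parts, and then necessarily $\bigl||X|-|Y|\bigr|=O(\sqrt{k})$. Applying this to $F$ (with $k=O(t)$) gives a near-balanced bipartition $(X,Y)$ of $V(G)$ inside which $F$ — hence $G$, up to its at most $t$ triangular edges — has only $O(t)$ edges, and across which $F$ is missing only $O(t)$ of the at most $|X||Y|$ possible edges. Since $|X||Y|\le\lfloor n^2/4\rfloor<e(G)$, there is an edge $uv$ of $G$ with $u$ and $v$ on the same side; a little care (using that at most $O(t)$ cross-pairs are missing and at most $O(t)$ vertices touch an inside edge) lets us choose such a $uv$ with both $u$ and $v$ adjacent in $G$ to all but $O(t)$ vertices of the opposite part, so $|N_G(u)\cap N_G(v)|\ge\lfloor n/2\rfloor-O(t)$. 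The edge $uv$ and the pairwise distinct edges $\{uw,vw:w\in N_G(u)\cap N_G(v)\}$ are all triangular, yielding $t\ge 2(\lfloor n/2\rfloor-O(t))+1$; since we may assume $t$ is small (say $t<n/10$, as otherwise the desired bound is already clear), this contradicts $t\le 2\lfloor n/2\rfloor$. Tracking the $O(\cdot)$ terms exactly, together with the symmetric case $u,v\in X$, then upgrades this to the sharp bound $2\lfloor n/2\rfloor+1$.

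The step I expect to be the main obstacle is precisely this last upgrade to the exact constant. The soft version of the argument gives only $t\ge(2-o(1))\lfloor n/2\rfloor$, and squeezing out $2\lfloor n/2\rfloor+1$ forces one to control every error term and to examine the near-equality configurations by hand — the extreme case being $G=T_{n,2}$ together with a single edge inside a part, which has exactly $2\lfloor n/2\rfloor+1$ triangular edges. Parity is the subtle point: for even $n$ the bound and the target differ by only two, so one must show that already a single same-side edge forces $2\lfloor n/2\rfloor+1$ triangular edges (it does, since then $N_G(u)\cap N_G(v)$ is essentially an entire part of size $n/2$), and that any further deviation of $F$ from $T_{n,2}$ creates genuine extra slack.

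A self-contained alternative is induction on $n$. If $\delta(G)\ge\lfloor n/2\rfloor+1$ then for every edge $uv$ one has $|N(u)\cap N(v)|\ge\deg(u)+\deg(v)-n\ge 1$, so all $e(G)>\lfloor n^2/4\rfloor$ edges of $G$ are triangular, which is at least $2\lfloor n/2\rfloor+1$ for $n\ge 3$. Otherwise take $v$ with $\deg(v)\le\lfloor n/2\rfloor$; since $\lfloor n^2/4\rfloor-\lfloor(n-1)^2/4\rfloor=\lfloor n/2\rfloor$ we get $e(G-v)>\lfloor(n-1)^2/4\rfloor$, so by induction $G-v$ — hence $G$ — has at least $2\lfloor(n-1)/2\rfloor+1$ triangular edges. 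For odd $n$ this equals $2\lfloor n/2\rfloor+1$ and we are done; for even $n$ it is two short, and recovering these two edges — via a reinforced hypothesis tracking the surplus $e(G)-\lfloor n^2/4\rfloor$, a truncation of the promised count by $e(G)$ to accommodate small dense graphs such as $K_4$, and separate treatment of the case in which $v$ lies in no triangle (so that the triangular edges of $G$ and of $G-v$ coincide) — is the crux of this route too.
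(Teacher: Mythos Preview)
The paper does not give its own proof of this theorem; it is stated as a background result of Erd\H{o}s, Faudree and Rousseau \cite{EFR1992} and used only as a black box (for instance inside the proof of Lemma~\ref{Lupper}). There is therefore nothing in the paper to compare your proposal against. It is worth noting that your first, stability-based, route is close in spirit to how the paper attacks its own spectral theorems (Sections~\ref{sec4}--\ref{sec5}): bound the number of triangles, deduce a near-bipartition, and then analyse the few edges inside the parts.

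On the substance of your sketch: both routes are sensible outlines, and you are candid that neither is finished. One step, however, is wrong rather than merely incomplete. In the stability argument you write ``since we may assume $t$ is small (say $t<n/10$, as otherwise the desired bound is already clear)''. This assumption is not available: you are working under $t\le 2\lfloor n/2\rfloor$, and $t\ge n/10$ in no way yields $t\ge 2\lfloor n/2\rfloor+1$. The underlying issue is that every error term coming from stability is of order $t$ itself, so what you actually obtain is $t\ge 2\lfloor n/2\rfloor+1-Ct$ for some constant $C>0$, i.e.\ $t\ge(2\lfloor n/2\rfloor+1)/(1+C)$, which contradicts nothing. You concede this in the next paragraph (``the soft version gives only $t\ge(2-o(1))\lfloor n/2\rfloor$''), but then the preceding paragraph does not establish what it asserts. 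For the inductive route, the key missing observation is that if the deleted low-degree vertex $v$ lies in some triangle then two fresh triangular edges incident to $v$ make up the deficit of two; the genuine obstacle is the residual case $d(v)=\lfloor n/2\rfloor$ with $N(v)$ independent, and that case is left untreated.
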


 This bound is the best possible simply by adding an edge to the larger vertex part of 
 the balanced  complete bipartite graph.  
Motivated by the problem about the number of triangles, 
it is natural to ask how many triangular edges an $n$-vertex 
graph with $m$ edges must have, where 
$m$ is an integer satisfying $\lfloor n^2/4\rfloor < m \le {n \choose 2}$. 
Indeed, this problem was recently studied by 
F\"{u}redi and Maleki \cite{FM2017} 
as well as Gruslys and Letzter \cite{GL2018}. 
Given integers $a,b,c$, let 
$G(a,b,c)$ denote the graph on 
$n=a+b+c$ vertices, which consists of a clique $A$ 
of size $a$ and two independent sets $B$ and $C$ 
of sizes $b$ and $c$ respectively, such that 
all edges between $B$ and $A\cup C$ induces a complete bipartite graph $K_{b,a+c}$. 
In other words, the graph $G(a,b,c)$ can be obtained from 
$K_{b,a+c}$ by embedding a clique of order $a$ into the part of size $a+c$. 
Note that $G(a,b,c)$ has ${a \choose 2} + (a+c)b$ edges and 
it has ${a \choose 2}+ab = m-bc$ triangular edges. 
In 2017, F\"{u}redi and Maleki \cite{FM2017}  
conjectured that the minimizers of the number of triangular
edges are graphs of the form $G(a,b,c)$ or 
subgraphs of such graphs. 

\begin{conjecture}[F\"{u}redi--Maleki, 2017] 
\label{conj-FM}
Let $m>  n^2/4 $ and $G$ be an $n$-vertex graph with $m$ edges that minimizes the number of triangular edges. 
Then $G$ is isomorphic to a subgraph of $G(a,b,c)$ 
for some $a,b,c$. 
\end{conjecture}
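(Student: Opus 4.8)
\medskip
\noindent\textbf{A proof strategy.}
I would attack Conjecture~\ref{conj-FM} by a stability argument reinforced with edge-count-preserving local switchings, using Theorem~\ref{thm-EFR} (and a stability companion of it) for the low-excess range $m=\lfloor n^2/4\rfloor+o(n)$ and treating the rest directly. The starting point is a duality with Mantel's theorem: if $G$ has $m$ edges and $t$ triangular edges, the $m-t$ non-triangular edges induce a triangle-free subgraph $H\subseteq G$, since three pairwise non-triangular edges forming a triangle would each be triangular. Hence $m-t\le\lfloor n^2/4\rfloor$, so $t\ge m-\lfloor n^2/4\rfloor$; conversely, any $G(a,b,c)$ with $a+b+c=n$ and $\binom{a}{2}+b(a+c)=m$ has exactly $m-bc$ triangular edges, so the minimum $t_{\min}(n,m)$ is sandwiched by $m-\lfloor n^2/4\rfloor\le t_{\min}(n,m)\le m-\max\{bc\}$, the maximum over admissible triples. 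A first, purely arithmetic, step solves this optimization and identifies the extremal template $G(a_0,b_0,c_0)$; for instance when $m=\lfloor n^2/4\rfloor+1$ the triple $(2,\lfloor n/2\rfloor,\lceil n/2\rceil-2)$ attains $2\lfloor n/2\rfloor+1$, so there Conjecture~\ref{conj-FM} is exactly the classification of the extremal graphs of Theorem~\ref{thm-EFR}.

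\medskip
For the heart of the proof I would fix an \emph{arbitrary} minimizer $G$, let $H$ be the triangle-free graph of its non-triangular edges, and then force, one property at a time, that $G$ embeds into some $G(a,b,c)$, each property holding because its failure would exhibit an $m$-preserving local switch (delete one edge, add another) that strictly decreases the number of triangular edges. Schematically: (i) in the ranges where $H$ has $\Omega(n^2)$ edges, the quantitative Mantel stability theorem (Erd\H{o}s--Simonovits) first shows $H$ is close to a complete bipartite graph with parts $B,C$, and then clean-up switchings --- re-routing the edges at a single vertex, or swapping an internal chord for a missing $B$--$C$ edge --- upgrade ``close'' to ``exactly $K_{B,C}$'' on the relevant vertices; (ii) further switchings show that every edge of $G$ outside $H$ either lies inside $A:=V\setminus(B\cup C)$ or joins $A$ to $B$, and that no edge joins $A$ to $C$, which places $E(G)$ inside $E(G(|A|,|B|,|C|))$. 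In the low-excess range one replaces the input of (i) by a stability version of Theorem~\ref{thm-EFR}, which already confines $G$ to a balanced bipartite graph plus a bounded modification, and then runs the same switchings.

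\medskip
The main obstacle, and the reason the conjecture is genuinely hard, is step~(i) in the regime $m=\Theta(n^2)$ with $m$ bounded away from $n^2/4$: there $H$ has only $o(n^2)$ edges, Mantel stability is vacuous, and one is really facing a dense Erd\H{o}s--Rademacher-type structure problem on the ``triangular side'' --- precisely the territory of Razborov, Nikiforov, Reiher and Liu--Pikhurko--Staden cited above, where exact minimizers are notoriously delicate. I expect one must either (a) recast the optimization over clique-plus-bipartite templates as a convex problem in edge- and triangle-densities and combine it with a supersaturation count of the triangles that the $m-|E(H)|$ triangular edges are forced to span, or (b) deploy the supersaturation-stability machinery developed in this paper, applied simultaneously to $H$ and to the triangle profile of $G$, so as to obtain the approximate and the exact structure together. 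A secondary, purely bookkeeping, difficulty is that $e(G(a,b,c))$ cannot in general be made to equal a prescribed $m$: this is exactly why the statement allows $G$ to be a \emph{proper} subgraph of a template, and every switching argument must be compatible with that slack and with the attendant parity and floor case distinctions.
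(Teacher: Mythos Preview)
The paper does \emph{not} prove this statement: Conjecture~\ref{conj-FM} is quoted from F\"uredi and Maleki~\cite{FM2017} as an open conjecture, and the paper neither claims nor attempts a proof. The only progress the paper cites is Theorem~\ref{thm-GL} of Gruslys and Letzter, which establishes the conjecture for sufficiently large $n$; the paper's own contributions (Theorems~\ref{thm-main} and~\ref{thm-main3}) are spectral analogues of the much weaker Theorem~\ref{thm-EFR}, not of Conjecture~\ref{conj-FM}. So there is no ``paper's own proof'' to compare against.

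Your write-up is explicitly a strategy sketch rather than a proof, and you correctly flag the regime $m=\Theta(n^2)$ bounded away from $n^2/4$ as the genuine obstacle. That diagnosis is fair, and the outline you give --- pass to the triangle-free graph $H$ of non-triangular edges, use stability to pin down $H$ as (close to) complete bipartite, then run edge-count-preserving switchings to embed $G$ in a template $G(a,b,c)$ --- is broadly in the spirit of what F\"uredi--Maleki and Gruslys--Letzter do. Two cautions, though. First, your step~(ii) as stated is too quick: in $G(a,b,c)$ the set $B$ is complete to $A\cup C$, so edges from $A$ to $C$ are \emph{absent} but edges from $B$ to $C$ are \emph{present}; your description of which cross-edges survive needs to match this, and the switching arguments have to be chosen so that each move preserves $m$ while not increasing the triangular count --- this is where the real case analysis lives in~\cite{GL2018}. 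Second, the supersaturation-stability of this paper (Theorem~\ref{thm-far-bipartite}) only controls graphs that are far from bipartite via their triangle count; it gives you nothing directly about the structure of $H$ in the dense regime you are worried about, so option~(b) in your last paragraph is not a shortcut here.
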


Particularly, F\"{u}redi and Maleki \cite{FM2017}  
proposed a numerical conjecture, which states that 
every $n$-vertex graph with $m$ edges has at least 
$g(n,m)$ triangular edges, where 
\[ g(n,m) = \min
\left\{m -bc: a+b+c =n, {a \choose 2} + b(a+c) \ge m \right\}. \]
We remark that Conjecture \ref{conj-FM} 
characterizes the structures of the minimizers, 
while the latter conjecture gives a lower bound only. 
By using a generalization of Zykov's symmetrization 
method, 
F\"{u}redi and Maleki \cite{FM2017} 
showed a lower bound: 
if $G$ is a graph on $n$ vertices with $m > n^2/4$ edges, 
then $G$ has at least 
$ g(n,m) - 3n/2 $ 
 triangular edges.  
Soon after, Gruslys and Letzter \cite{GL2018} 
proved an exact version of the result of F\"{u}redi and Maleki. 
Let $\mathbf{NT}(G)$ be the set of non-triangular edges of $G$. 
The following result was established in \cite{GL2018}. 

\begin{theorem}[Gruslys--Letzter, 2018] 
\label{thm-GL}
There is $n_0$ such that 
for any graph $G$ on  $n\ge n_0$ vertices, there exists 
a graph $H=G(a,b,c)$ on $n$ vertices such that $e(H)\ge e(G)$  and $|\mathbf{NT}(H)| \ge |\mathbf{NT}(G)|$. 
\end{theorem}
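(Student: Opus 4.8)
The plan is to transform $G$, through a sequence of local moves each of which decreases neither the number of edges nor the number of non-triangular edges, into a graph of the form $G(a,b,c)$ on the same vertex set; the graph at which the process stops is then the required $H$. First dispose of the trivial range: if $e(G)\le\lfloor n^2/4\rfloor$, take $H=G(0,\lfloor n/2\rfloor,\lceil n/2\rceil)=K_{\lfloor n/2\rfloor,\lceil n/2\rceil}$, so that $e(H)=\lfloor n^2/4\rfloor\ge e(G)$ while every edge of $H$ is non-triangular and $|\mathbf{NT}(H)|=\lfloor n^2/4\rfloor\ge e(G)\ge|\mathbf{NT}(G)|$. Hence we may assume $e(G)>\lfloor n^2/4\rfloor$, so by Mantel's theorem $G$ contains a triangle.

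Two observations underpin the argument. First, $\mathbf{NT}(G)$, viewed as a spanning subgraph, is triangle-free, since three pairwise non-triangular edges would span a triangle and hence be triangular; in particular $|\mathbf{NT}(G)|\le\lfloor n^2/4\rfloor$, and the asymptotic estimate of F\"{u}redi and Maleki quoted above gives $|\mathbf{NT}(G)|\le\max\{bc:\ a+b+c=n,\ \binom{a}{2}+b(a+c)\ge e(G)\}+3n/2$. Second, a Zykov-type symmetrization can be made monotone in both $e(G)$ and $|\mathbf{NT}(G)|$: for non-adjacent vertices $u,v$, let the move $v\mapsto u$ replace $N(v)$ by $N(u)$, so that $u$ and $v$ become non-adjacent twins. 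A short case check shows that after the move a new edge at $v$ is triangular precisely when the corresponding edge at $u$ is; that no edge which was non-triangular becomes triangular, since any new triangle through $v$ uses two vertices of $N(u)$ whose joining edge was already triangular via $u$; and that some edges may become non-triangular once $v$'s former neighbours are dropped. Hence $e(G)$ changes by $d(u)-d(v)$ and $|\mathbf{NT}(G)|$ changes by at least $\mathrm{nt}(u)-\mathrm{nt}(v)$, where $\mathrm{nt}(x)$ denotes the number of non-triangular edges at $x$. Thus whenever some non-adjacent pair has $d(u)\ge d(v)$ and $\mathrm{nt}(u)\ge\mathrm{nt}(v)$, the move $v\mapsto u$ is harmless.

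Now iterate the strictly productive moves; since $e(G)$ and $|\mathbf{NT}(G)|$ are bounded integers the process terminates, at a graph on which every admissible move is neutral. A short argument then shows that such a terminal graph satisfies $d(u)=d(v)\Rightarrow\mathrm{nt}(u)=\mathrm{nt}(v)$ and $d(u)>d(v)\Rightarrow\mathrm{nt}(u)<\mathrm{nt}(v)$ for all non-adjacent pairs $u,v$. Feeding this into the triangle-freeness of $\mathbf{NT}(G)$ together with the near-extremality bound of the previous paragraph forces $\mathbf{NT}(G)$ to be close to --- and then, after a final round of surgeries each checked not to decrease $e(G)$ or $|\mathbf{NT}(G)|$, exactly equal to --- a complete bipartite graph $K_{b,c}$, with the triangular edges forming a complete split graph on the remaining part $A\cup B$ and no edges between $A$ and $C$; that is, the terminal graph is $G(a,b,c)$. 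Since every step preserved both inequalities, $H=G(a,b,c)$ satisfies $e(H)\ge e(G)$ and $|\mathbf{NT}(H)|\ge|\mathbf{NT}(G)|$.

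The main obstacle is this last stage. The structural constraints on a terminal graph do not by themselves identify the extremal shape; one must feed them into both the triangle-freeness of $\mathbf{NT}(G)$ and the F\"{u}redi--Maleki error term, and then run a detailed case analysis over how the vertices distribute among the prospective parts $A$, $B$, $C$, treating degenerate small values of $a,b,c$ separately and controlling the final round of surgeries. The largeness hypothesis $n\ge n_0$ enters only here, to make the stability estimate strong enough for that control; the conceptual core --- that $\mathbf{NT}(G)$ is triangle-free, and that the correctly oriented Zykov move is non-decreasing in both $e(G)$ and $|\mathbf{NT}(G)|$ --- is short.
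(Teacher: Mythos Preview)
This theorem is not proved in the present paper; it is quoted from \cite{GL2018} as background for the discussion of triangular edges, so there is no in-paper proof to compare your attempt against.

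On the proposal itself: your two preliminary observations are correct, and the monotonicity of the Zykov move $v\mapsto u$ in both $e$ and $|\mathbf{NT}|$ (for non-adjacent $u,v$ with $d(u)\ge d(v)$ and $\mathrm{nt}(u)\ge\mathrm{nt}(v)$), together with the terminal dichotomy you derive, are valid. But this much is precisely the F\"uredi--Maleki symmetrization that the paper already credits to \cite{FM2017}, and which yields only the asymptotic bound with an additive $3n/2$ error. The genuine gap is exactly the stage you yourself flag as the ``main obstacle'': passing from a symmetrization-terminal graph to an exact $G(a,b,c)$. What you write there is a list of ingredients (``feed the dichotomy and the F\"uredi--Maleki bound into triangle-freeness of $\mathbf{NT}(G)$, run a case analysis, then perform final surgeries'') rather than an argument, and a stability estimate carrying $O(n)$ slack cannot by itself pin down the exact extremal structure. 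The Gruslys--Letzter proof is long precisely because closing this approximate-to-exact gap requires a separate and substantial argument beyond refined Zykov moves. As written, your proposal is an outline of the F\"uredi--Maleki asymptotic result together with a promissory note for the exact version, not a proof of Theorem~\ref{thm-GL}.
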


Theorem \ref{thm-GL} shows that  for sufficiently large $n$, 
the minimum number of triangular edges among all 
 $n$-vertex graphs with at least 
$m$ edges is achieved by the graph $G(a,b,c)$ or its subgraph 
for some $a,b,c$. 
We refer the readers to \cite{GL2018}   
for more details  and \cite{GHV2019} 
for the study on the minimum number of edges that occur in odd cycles.

\subsection{Spectral extremal graph problems}

Spectral graph theory aims to apply the eigenvalues of matrices associated
with graphs to find the structural information of graphs. 
Let $G$ be a simple graph on the vertex set $\{v_1,v_2,\ldots ,v_n\}$. The adjacency matrix of 
$G$ is defined as $A(G)=[a_{i,j}]_{i,j=1}^n$, 
where $a_{i,j}=1$ if $v_i$ and $v_j$ are adjacent, and $a_{i,j}=0$ otherwise. 
Let $\lambda (G)$ be the spectral radius 
of $G$, which is defined as the maximum modulus 
of eigenvalues of $A(G)$. 
Note that $A(G)$ is a non-negative matrix. 
By the Perron--Frobenius theorem, $\lambda (G)$ is the largest eigenvalue of $A(G)$. 
The study in this article mainly concentrates on the adjacency spectral radius.  

\medskip 
As mentioned before, 
the Tur\'{a}n type problem studies the maximum size of 
a graph that forbids certain subgraphs. 
In particular, one could wish to investigate the maximum possible spectral radius of the associated adjacency matrix of a graph that does not contain certain subgraphs.  
The interplay between these two areas above
is called the spectral Tur\'{a}n-type problem.   
One of the famous results of this type was obtained 
in 1986 by Wilf \cite{Wil1986} who showed that 
every graph $G$ on $n$ vertices with 
$\lambda (G) > (1- {1}/{r})n$ contains  
a clique $K_{r+1}$. 
This spectral version generalized the classical Tur\'{a}n theorem  
by invoking the fact  $\lambda (G) \ge 2m/n$. 
It is worth emphasizing that spectral Tur\'{a}n problems have been receiving considerable 
attention in the last two decades 
and it is still an attractive topic; see, e.g., 
\cite{Wil1986,Niki2002cpc,Niki2007laa2,LP2022second} for graphs with no cliques, \cite{BN2007jctb,LNW2021,ZhangST2024,ELW2024} for a conjecture of Bollob\'{a}s and Nikiforov, 
\cite{LNW2021,ZS2022dm,LFP2023-solution} for non-bipartite triangle-free graphs, 
\cite{TT2017,LN2021outplanar} for planar graphs and outerplanar graphs, 
\cite{Niki2009cpc} for a spectral Erd\H{o}s--Stone--Bollob\'{a}s theorem,  
\cite{Niki2009jgt} for the spectral stability theorem, 
\cite{CDT2023-even-cycle,LZS2024} 
for spectral problems on cycles, 
\cite{CDT2023} for a spectral Erd\H{o}s--S\'{o}s theorem, 
\cite{FLSZ2024} for some specific trees, 
\cite{ZL2022} for a spectral Erd\H{o}s--P\'{o}sa theorem, 
\cite{ZL2022jgt,Niki2021} for books and theta graphs, 
\cite{LN2023,Zhang2024} for cycles of consecutive lengths, 
\cite{Wang2022} for a spectral result 
on a class of graphs, and 
 \cite{Tait2019,ZL2022jctb} for graphs without 
 $K_t$-minors or $K_{s,t}$-minors. 

\medskip 
Although there has been a wealth of research results 
on the spectral extremal graph problems in recent years, 
there are \textbf{very few} conclusions on the problems 
of counting substructures in terms of spectral radius. 
The first result on this topic 
can even be traced back to a 
 result of Bollob\'{a}s 
and Nikiforov \cite{BN2007jctb} who 
showed that for every $n$-vertex graph $G$ and $r\ge 2$, 
the number of cliques of order $r+1$ satisfies 
\[  k_{r+1}(G) \ge 
\left( \frac{\lambda (G)}{n} - 1 + \frac{1}{r} \right) 
\frac{r(r-1)}{r+1} \left( \frac{n}{r}\right)^{r+1}.  \]

In 2023, Ning and Zhai \cite{NZ2021} 
studied the spectral saturation on triangles. 
A result of Erd\H{o}s and Rademacher states that 
every $n$-vertex graph $G$ with $e(G) > e(T_{n,2})$ contains at least 
$\lfloor \frac{n}{2} \rfloor$ triangles.  Correspondingly, 
it is natural to consider the  spectral version:  
if $G$ is a graph  with 
$ \lambda (G)> \lambda (T_{n,2})$, 
does $G$ have at least  $\lfloor \frac{n}{2}\rfloor$ triangles?    
Unfortunately, this result is not true. 
Let $K_{a,b}^+$ be the graph obtained from the complete bipartite graph $K_{a,b}$ by adding an edge to 
the vertex set of size $a$. 
For even $n$, we take $a=\frac{n}{2} +1$ and $b=\frac{n}{2}-1$. 
One can  verify that $\lambda (K_{\frac{n}{2}+1,\frac{n}{2}-1}^+) > \lambda (T_{n,2})$,   while  $K_{\frac{n}{2}+1,\frac{n}{2}-1}^+$ has exactly $\frac{n}{2}-1$ triangles.   
Recently, Ning and Zhai \cite{NZ2021} provided 
the following tight bound.

\begin{theorem}[Ning--Zhai, 2023] \label{thmNZ2021}
If $G$ is an $n$-vertex graph  with 
\[   \lambda (G) \ge \lambda (T_{n,2}), \]  
then  $G$ has at least 
$ \left\lfloor \frac{n}{2}\right\rfloor -1 $ triangles, 
unless $G$ is the bipartite Tur\'{a}n graph $T_{n,2}$.  
\end{theorem}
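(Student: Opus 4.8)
The plan is to argue by contradiction. Suppose $G$ is an $n$-vertex graph with $\lambda(G)\ge\lambda(T_{n,2})$, with $G\neq T_{n,2}$, and with at most $\lfloor n/2\rfloor-2$ triangles; I will derive a contradiction. Write $t(G)$ for the number of triangles and, for vertex sets $S,T$, let $e(S)$ and $e(S,T)$ denote the numbers of edges of $G$ inside $S$ and between $S$ and $T$; recall that $\lambda(T_{n,2})=\sqrt{\lfloor n^2/4\rfloor}$. First dispose of the triangle-free case: the spectral version of Mantel's theorem asserts that a triangle-free graph $G$ satisfies $\lambda(G)\le\lambda(T_{n,2})$, with equality if and only if $G=T_{n,2}$, so the triangle-free case forces $G=T_{n,2}$, which is excluded. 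Hence $G$ contains a triangle; in particular it is not bipartite.

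Next I localise $e(G)$. From $\tr(A(G)^2)=2e(G)$, $\tr(A(G)^3)=6t(G)$ and $|\lambda_i(G)|\le\lambda(G)$ for all $i$ one obtains the walk inequality $\lambda(G)\bigl(\lambda(G)^2-e(G)\bigr)\le 3t(G)$. Since $\lambda(G)\ge\sqrt{\lfloor n^2/4\rfloor}>\tfrac{n-1}{2}$ and $t(G)\le\lfloor n/2\rfloor-2$, this gives $\lambda(G)^2-e(G)<3$, hence $e(G)\ge\lfloor n^2/4\rfloor-2$. On the other hand, if $e(G)\ge\lfloor n^2/4\rfloor+1$ then Rademacher's theorem yields $t(G)\ge\lfloor n/2\rfloor$, a contradiction; hence $e(G)\in\{\lfloor n^2/4\rfloor-2,\,\lfloor n^2/4\rfloor-1,\,\lfloor n^2/4\rfloor\}$. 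So $G$ is non-bipartite, has edge count within $2$ of the Tur\'an bound, and has only $O(n)$ triangles.

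The third step is the supersaturation--stability input, and this is where the bulk of the work lies. Normalise a Perron eigenvector $\mathbf{x}$ of $G$ so that $\|\mathbf{x}\|_\infty=1$, choose $u$ with $x_u=1$, and put $X=N(u)$, $\overline{X}=V(G)\setminus X$ (so $u\in\overline{X}$). From $\lambda(G)=\lambda(G)x_u=\sum_{w\in X}x_w\le|X|$ we see that $u$ has degree $|X|=d(u)\ge\lambda(G)>\tfrac{n-1}{2}$. Expanding $\lambda(G)^2=\lambda(G)^2x_u=(A(G)^2\mathbf{x})_u$, bounding eigenvector entries by $1$, and using that $e(X)\le t(G)$ (each edge inside $N(u)$ gives a triangle through $u$) together with $e(G)\le\lfloor n^2/4\rfloor$, one derives both $e(\overline{X})\le e(X)\le t(G)$ and $\lfloor n^2/4\rfloor-|X|\cdot|\overline{X}|\le 2t(G)$; as the left side of the latter grows quadratically in $|X|-\tfrac n2$, this forces $\bigl||X|-\tfrac n2\bigr|=O(\sqrt n)$. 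Thus $G$ differs from an almost balanced complete bipartite graph with parts $X,\overline{X}$ by only $O(n)$ edges; that is, $G$ is stable. (This near-bipartiteness can alternatively be extracted from the book theorem of Edwards and of Khadžiivanov--Nikiforov, which avoids the triangle removal lemma and thus any hypothesis that $n$ be large --- precisely the point emphasised in the abstract.)

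Finally I would pin down the exact extremal structure. Combining $e(G)\ge\lfloor n^2/4\rfloor-2$ with $e(\overline X)\le e(X)\le t(G)$ gives $e(X,\overline{X})=e(G)-e(X)-e(\overline{X})\ge\lfloor n^2/4\rfloor-2-2t(G)$, so the number of cross-edges missing from $K_{|X|,|\overline X|}$ is at most $2+e(X)+e(\overline X)$, and a careful analysis --- using that an inside edge both of whose endpoints are adjacent to all but a few vertices of the other part already lies in nearly $n/2$ triangles --- forces $G$ to be one of a bounded number of explicit near-extremal graphs. Here the spectral hypothesis $\lambda(G)\ge\lambda(T_{n,2})$ is used crucially: among such configurations, the ones with fewer than $\lfloor n/2\rfloor-1$ triangles (for example $K^{+}_{a,b}$ with $|a-b|$ not small, or a near-balanced complete bipartite graph with several cross-edges deleted) have spectral radius strictly below $\lambda(T_{n,2})$ --- as one checks directly, e.g.\ from the characteristic equation of $K^{+}_{a,b}$ --- and are therefore excluded. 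What survives are graphs with at least $\lfloor n/2\rfloor-1$ triangles, such as the extremal one obtained from $T_{n,2}$ by deleting a cross-edge and adding an edge inside a part; this contradicts $t(G)\le\lfloor n/2\rfloor-2$ and proves the theorem. I expect the heart of the difficulty to be exactly this last step: simultaneously controlling $e(G)$, the ``defect'' structure, and $\lambda(G)$ for all three admissible edge counts and for both parities of $n$, and promoting the $O(n)$-defect stability to the $O(1)$-defect statement needed for the sharp constant --- all without ever assuming $n$ is large.
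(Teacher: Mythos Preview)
Your outline is sound and would succeed, but it follows a genuinely different route from the paper's alternative proof. After the common first step --- using the walk inequality $\lambda(\lambda^2-e(G))\le 3t(G)$ to pin $e(G)\ge\lfloor n^2/4\rfloor-2$ --- the two arguments diverge at the stability step. You obtain the near-bipartition by taking $X=N(u)$ for a Perron-maximising vertex $u$ and expanding $\lambda^2 x_u=(A^2\mathbf{x})_u$ to get $e(\overline X)\le e(X)\le t(G)$ and $\lfloor n^2/4\rfloor-|X||\overline X|\le 2t(G)$; this is precisely the ``structural analysis on the entries of the Perron vector and on counting the walks of length two'' that the paper identifies as Ning and Zhai's original method and deliberately avoids. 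The paper instead invokes Theorem~\ref{thm-far-bipartite} (the supersaturation--stability lemma: a graph that is $t$-far from bipartite has at least $\tfrac{n}{6}(m+t-n^2/4)$ triangles) to conclude directly that $G$ is not $6$-far from bipartite, yielding a max-cut partition $S\cup T$ with $e(S)+e(T)<6$. A triangle-count then forces $e(S)+e(T)=1$, after which $G\subseteq K_{s,t}^+$ with $|s-n/2|\le 1$. Your route needs the auxiliary appeal to Rademacher's theorem to cap $e(G)\le\lfloor n^2/4\rfloor$ (otherwise $\lambda^2-e(G)$ could be negative and $e(\overline X)\le e(X)$ would not follow), whereas the paper's route does not require any upper bound on $e(G)$. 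On the other hand, your approach is self-contained in the spectral/walk framework and does not depend on the external combinatorial lemma Theorem~\ref{thm-far-bipartite}. One small overstatement: neither argument is truly free of a lower bound on $n$ --- the paper's proof explicitly uses $n\ge 36$ when showing there is only one class-edge.
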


\section{Main results}

\subsection{Spectral radius vs triangular edges}

In the sequel, we shall put our attention on the extremal graph 
 problems 
concerning the spectral supersaturation. 
Specifically, 
we shall present a tight bound  on 
the number of triangular edges 
in a graph with spectral radius larger than that of $T_{n,2}$. Hence, we prove a spectral version of 
the result of Erd\H{o}s, Faudree and Rousseau.  

\begin{theorem}  \label{thm-main}
Let $G$ be a graph with $n\ge 5432$ vertices and 
\[ \lambda (G) \ge \lambda (T_{n,2}).\]  
Then $G$ has at least $2 \lfloor \frac{n}{2} \rfloor -1$ 
triangular edges, unless  $G=T_{n,2}$. 
\end{theorem}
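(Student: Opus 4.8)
The plan is to combine a spectral-stability argument with the supersaturation version of the Erd\H{o}s--Faudree--Rousseau theorem. First I would establish that a graph $G$ on $n$ vertices with $\lambda(G)\ge\lambda(T_{n,2})$ is \emph{close} to $T_{n,2}$ in edit distance: there is a partition $V(G)=V_1\cup V_2$ such that the number of edges inside the parts plus the number of non-edges across the parts is at most $o(n^2)$. This follows from the spectral stability machinery (e.g. Nikiforov's spectral stability theorem quoted in the excerpt), since $\lambda(G)\ge\lambda(T_{n,2})\ge \sqrt{\lfloor n^2/4\rfloor}$ forces $e(G)\ge\lfloor n^2/4\rfloor$ up to lower-order terms, and any triangle-free or near-triangle-free graph with this many edges must be nearly bipartite. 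In fact, because $\lambda(G)$ is \emph{at least} that of $T_{n,2}$ and $T_{n,2}$ is the unique spectral extremal triangle-free graph, $G$ must contain a triangle unless $G=T_{n,2}$; so from now on assume $G\neq T_{n,2}$ and $G$ has at least one triangular edge.

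Next, the core step: I would show that if $G$ had fewer than $2\lfloor n/2\rfloor-1$ triangular edges, then deleting all triangular edges yields a triangle-free graph $G'$ with $e(G')\ge e(G)-(2\lfloor n/2\rfloor-2)$. Using the eigenvector $\mathbf{x}$ of $G$ normalized so that $x_u=1$ at a vertex $u$ of maximum entry, and the Rayleigh quotient $\lambda(G)^2 = \mathbf{x}^\top A(G)^2\mathbf{x}/\mathbf{x}^\top\mathbf{x}$, I would try to bound $\lambda(G)^2$ by a sum over walks of length two, separating the contribution of triangular edges from the rest. The point is that $\lambda(G')\le\lambda(T_{n,2})$ (Mantel/Wilf in spectral form), so the triangular edges must ``carry'' the excess $\lambda(G)^2-\lambda(G')^2\ge 0$; but a set of $t$ edges can increase the spectral radius squared by at most roughly $O(t)$ relative to the closest bipartite graph, and having $t<2\lfloor n/2\rfloor-1$ triangular edges would be insufficient to push $\lambda(G)$ up to $\lambda(T_{n,2})$ unless equality holds throughout, which pins down $G=T_{n,2}$. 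To make this quantitative I would invoke the closeness to $T_{n,2}$ from the first step: write $G$ as $T_{n,2}$ with a small symmetric difference $D$ of size $|D|=o(n^2)$, expand $\lambda(G)^2$ around $\lambda(T_{n,2})^2=\lfloor n^2/4\rfloor$, and use that every triangle in $G$ must use at least one edge of $D$ inside a part, so the triangular edges are confined to a bounded neighborhood of the ``defect'' edges.

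The alternative and probably cleaner route, which I would actually pursue, is a direct reduction to Theorem~\ref{thm-EFR}: I claim $\lambda(G)\ge\lambda(T_{n,2})=\sqrt{\lfloor n^2/4\rfloor}$ together with $G\neq T_{n,2}$ forces $e(G)>e(T_{n,2})=\lfloor n^2/4\rfloor$, \emph{or} $G$ has $\lfloor n^2/4\rfloor$ edges but is not bipartite. In the first case Theorem~\ref{thm-EFR} gives $2\lfloor n/2\rfloor+1$ triangular edges, more than enough. The delicate case is when $e(G)=\lfloor n^2/4\rfloor=e(T_{n,2})$ but $G$ is not the Tur\'an graph; here I would argue via a compression/local-switching argument or via the supersaturation-stability described in the abstract: such a $G$ with $\lambda(G)\ge\lambda(T_{n,2})$ and the ``wrong'' number of edges is structurally forced to be $T_{n,2}$ with one within-part edge added and one cross edge removed, for which one counts the triangular edges explicitly and checks the bound $2\lfloor n/2\rfloor-1$ is met (indeed roughly $n$ triangular edges appear). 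The hypothesis $n\ge 5432$ is exactly what is needed so that the error terms in the spectral-stability and supersaturation estimates are dominated by the linear gap $2\lfloor n/2\rfloor-1$.

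\medskip\par\noindent\textbf{Main obstacle.} The hard part will be the boundary case $e(G)$ equal to (or barely exceeding) $e(T_{n,2})$: here Theorem~\ref{thm-EFR} does not directly apply or gives nothing, yet the spectral hypothesis is active and must be converted into usable structural information. Controlling how the few within-part edges of the near-bipartite structure contribute to $\lambda(G)$ while simultaneously contributing only $O(n)$ triangular edges—and showing these two demands are compatible only for $G=T_{n,2}$—requires a careful eigenvector-perturbation estimate, and getting the explicit threshold $n\ge 5432$ is where the routine but lengthy calculation lives.
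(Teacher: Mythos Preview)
Your proposal has a genuine gap at its foundation: you repeatedly assume that $\lambda(G)\ge\lambda(T_{n,2})$ forces $e(G)\ge\lfloor n^2/4\rfloor$ (exactly, or ``up to lower-order terms''). This is false. The paper itself points out that the split graph $S_{n,k}$ with $k=n/5$ satisfies $\lambda(S_{n,k})>\lambda(T_{n,2})$ while $e(S_{n,k})<e(T_{n,2})$; more extremely, a star has spectral radius $\sqrt{n-1}$ with only $n-1$ edges. So neither your ``cleaner route'' (reduce to Theorem~\ref{thm-EFR}) nor your spectral-stability paragraph gets off the ground, because both need a lower bound on $e(G)$ that you have not established.

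The correct move, which the paper makes, is to use the \emph{contradiction hypothesis} (few triangular edges) to force $e(G)$ large. Fewer than $n$ triangular edges implies, via Kruskal--Katona, fewer than $n^{3/2}/2$ triangles; combining this with Lemma~\ref{thm-BN-CFTZ-NZ} in the form $m\ge\lambda^2-3t/\lambda$ gives $e(G)>n^2/4-3\sqrt{n}$. Only now can one invoke a stability result---and the paper uses the explicit supersaturation-stability Theorem~\ref{thm-far-bipartite} (not Nikiforov's spectral stability) to conclude $G$ is not $6\sqrt{n}$-far from bipartite, giving a max-cut partition $S\cup T$ with $e(S)+e(T)<6\sqrt{n}$. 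From here the argument is a rather delicate structural analysis you do not anticipate: one defines low-degree vertices $L$ and wrong-side-heavy vertices $W$, shows $W\subseteq L$ and $|L|<10$, proves $e(S\setminus L),\,e(T\setminus L)\le 1$ by counting common neighbors, and then uses a Perron-eigenvector switching argument (on a vertex of $L$) together with the extremality of $G$ to force $L=\varnothing$ and hence $e(S)+e(T)\le 1$. The endgame is a finite case analysis over $|S|\in[\lceil n/2\rceil-2,\lfloor n/2\rfloor+2]$, where Theorem~\ref{thmNZ2021} (at least $\lfloor n/2\rfloor-1$ triangles) and explicit characteristic-polynomial computations rule out every possibility. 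Your proposed edge-deletion/Rayleigh-quotient perturbation idea never enters, and the ``boundary case $e(G)=\lfloor n^2/4\rfloor$'' you flag as the obstacle is not the real difficulty.
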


The spectral condition in Theorem \ref{thm-main}
 is easier to satisfy than the edge-condition in Theorem \ref{thm-EFR}. 
Namely, if a graph $G$ satisfies $e(G)> e(T_{n,2})$, 
then $\lambda (G) > \lambda (T_{n,2})$.  
This observation can be guaranteed by 
$\lambda (G) \ge 2e(G) /n$. Nevertheless, 
there are many graphs with $\lambda (G)> \lambda (T_{n,2})$ 
but $e(G)< e(T_{n,2})$. Let $S_{n,k}$ be the split graph, which is the join of a clique of size $k$ with an independent set of size $n-k$. Taking $k=n/5$, we can verify that 
$S_{n,k}$ is a required example. 
A few words regarding the tightness of Theorem \ref{thm-main}  
are due. 
We show in next section that 
there exist three graphs $G$ such that $\lambda (G)>
\lambda (T_{n,2})$ and $G$ has exactly 
$2 \lfloor \frac{n}{2} \rfloor -1$ triangular edges, 
which implies the bound in Theorem \ref{thm-main}  is tight.   
 
  \medskip 
It is reasonable to reach 
such a difference between the results in 
Theorems \ref{thm-EFR} and \ref{thm-main}. 
Note that if $e(G)> 
e(T_{n,2})$, then $e(G) \ge e(T_{n,2}) +1$ holds  immediately. 
While, if  $\lambda (G) > \lambda (T_{n,2})$ holds, then there are many graphs with 
 $\lambda (G)$ very close to $\lambda (T_{n,2})$ and $e(G) = e(T_{n,2})$; see, e.g., the graphs in Figure \ref{fig-S2T1}. 
Roughly speaking, the spectral radii of graphs 
are distributed more compactly.  
Motivated by this observation, 
Li, Lu and Peng \cite{LLP2024-AAM} proposed a spectral conjecture on Mubayi's result \cite{Mub2010} and showed a spectral version of the Erd\H{o}s--Rademacher theorem.  Next, we are going to  provide a variant of Theorem \ref{thm-main}. We shall establish a spectral condition 
corresponding to the edge condition 
$e(G) \ge e(T_{n,2}) +1$. 
Recall that $K_{\lceil \frac{n}{2}\rceil , \lfloor \frac{n}{2}\rfloor}^+$ is the graph obtained from the complete bipartite graph 
$K_{\lceil \frac{n}{2}\rceil , \lfloor \frac{n}{2}\rfloor}$ by adding an edge to the vertex part of size $\lceil \frac{n}{2}\rceil$. 

\begin{theorem}   \label{thm-main3}
Let $G$ be a graph on $n\ge 5432$ vertices with  
\[ \lambda (G) \ge \lambda (K_{\lceil \frac{n}{2}\rceil , \lfloor \frac{n}{2}\rfloor}^+).\]  
Then $G$ has at least $2 \lfloor \frac{n}{2} \rfloor +1$ 
triangular edges, with equality if and only if 
$G=K_{\lceil \frac{n}{2}\rceil , \lfloor \frac{n}{2}\rfloor}^+$. 
\end{theorem}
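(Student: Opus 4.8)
The plan is to derive Theorem~\ref{thm-main3} from Theorem~\ref{thm-main} together with a sharpened analysis of the near-extremal graphs. First note that $K_{\lceil n/2\rceil,\lfloor n/2\rfloor}^+$ is connected and properly contains $T_{n,2}$, so the Perron--Frobenius theorem gives $\lambda(K_{\lceil n/2\rceil,\lfloor n/2\rfloor}^+)>\lambda(T_{n,2})$; hence any $G$ with $\lambda(G)\ge\lambda(K_{\lceil n/2\rceil,\lfloor n/2\rfloor}^+)$ satisfies $\lambda(G)>\lambda(T_{n,2})$, so $G\ne T_{n,2}$ and Theorem~\ref{thm-main} already shows that $G$ has at least $2\lfloor n/2\rfloor-1$ triangular edges. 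Since the two endpoints of the added edge of $K_{\lceil n/2\rceil,\lfloor n/2\rfloor}^+$ are joined to all $\lfloor n/2\rfloor$ vertices of the opposite part, a direct count shows that $K_{\lceil n/2\rceil,\lfloor n/2\rfloor}^+$ has exactly $2\lfloor n/2\rfloor+1$ triangular edges. It therefore remains to prove that every $n$-vertex graph $H\ne K_{\lceil n/2\rceil,\lfloor n/2\rfloor}^+$ with $\lambda(H)\ge\lambda(T_{n,2})$ and at most $2\lfloor n/2\rfloor+1$ triangular edges satisfies $\lambda(H)<\lambda(K_{\lceil n/2\rceil,\lfloor n/2\rfloor}^+)$.

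The second step is a stability statement for such an $H$. The set $\mathbf{NT}(H)$ of non-triangular edges spans a triangle-free subgraph of $H$, so Mantel's theorem gives $e(H)=|\mathbf{NT}(H)|+t(H)\le e(T_{n,2})+2\lfloor n/2\rfloor+1$, where $t(H)$ denotes the number of triangular edges. Feeding $e(H)\le e(T_{n,2})+O(n)$ together with $\lambda(H)\ge\lambda(T_{n,2})$ into the supersaturation--stability argument behind Theorem~\ref{thm-main}, one obtains that $H$ arises from a complete bipartite graph $K_{a,n-a}$ with $|a-\lceil n/2\rceil|=O(1)$ by adding and deleting only a bounded number of edges. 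The constraint $t(H)\le 2\lfloor n/2\rfloor+1$ then forbids two or more edges inside a single part and confines the edited bipartite edges to the neighbourhood of the endpoints of the at most one within-part edge, so a short case analysis reduces the problem to a finite explicit list of candidates: the graphs $K_{a,n-a}^+$ with $a$ in a bounded range around $\lceil n/2\rceil$, together with their bounded perturbations obtained by deleting a few bipartite edges incident to an endpoint of the added edge.

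For the final step one compares $\lambda(H)$ with $\lambda(K_{\lceil n/2\rceil,\lfloor n/2\rfloor}^+)$ across this finite list. For $H=K_{a,n-a}^+$, the partition into the two endpoints of the added edge, the other $a-2$ vertices of that part, and the opposite part of size $n-a$ is equitable, so $\lambda(H)$ is the largest root of $\mu^3-\mu^2-a(n-a)\mu+(a-2)(n-a)=0$; a first-order expansion yields $\lambda(K_{a,n-a}^+)=\sqrt{a(n-a)}+\tfrac{1}{a}+O(n^{-2})$, in which the added edge contributes the positive term $\tfrac{1}{a}=\Theta(n^{-1})$, the quantity $\sqrt{a(n-a)}$ is maximized at $a=\lceil n/2\rceil$ with a $\Theta(n^{-1})$ loss for each unit change of $a$, and deleting a bipartite edge further lowers $\lambda$ by a positive $\Theta(n^{-1})$ amount. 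Pushing these estimates through gives $\lambda(K_{\lceil n/2\rceil,\lfloor n/2\rfloor}^+)>\lambda(H)$ for every other candidate $H$, which simultaneously establishes the bound $t(G)\ge 2\lfloor n/2\rfloor+1$ and identifies $K_{\lceil n/2\rceil,\lfloor n/2\rfloor}^+$ as the unique extremizer.

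I expect the main obstacle to be the stability step: one must rule out that $H$ is a complete bipartite graph from which $\Theta(n)$ edges have been removed, since such a graph can still have very few triangular edges, so the bound $t(H)\le 2\lfloor n/2\rfloor+1$ alone does not exclude it; instead one has to show that removing that many edges drives $\lambda(H)$ strictly below $\lambda(K_{\lceil n/2\rceil,\lfloor n/2\rfloor}^+)$. The subsequent eigenvalue comparisons are also delicate, since all the surviving candidates agree with $\lambda(T_{n,2})$ up to an additive $\Theta(n^{-1})$, and it is precisely the requirement that this $\Theta(n^{-1})$ gap dominate all error terms that forces the explicit threshold $n\ge 5432$.
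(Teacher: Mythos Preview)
Your overall architecture---invoke the stability machinery underlying Theorem~\ref{thm-main} to reduce to a bounded perturbation of a nearly balanced complete bipartite graph, then finish with a finite spectral comparison---is precisely the paper's approach. The paper even remarks, at the start of Section~\ref{sec5}, that the bound $2\lfloor n/2\rfloor-1$ in the running hypothesis can be replaced by $2\lfloor n/2\rfloor+1$ to accommodate Theorem~\ref{thm-main3}, so Lemmas~\ref{lem-partition}--\ref{Lempty} and Theorem~\ref{STlambdarefine} apply verbatim and give the exact structural output: $e(S)+e(T)\le 1$, $\lceil n/2\rceil-2\le |S|,|T|\le\lfloor n/2\rfloor+2$, and $e(S,T)\ge\lfloor n^2/4\rfloor-4$. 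That is the precise form your ``$|a-\lceil n/2\rceil|=O(1)$ with bounded edits'' claim should take.

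Two points where your write-up diverges from what actually works. First, the assertion that the triangular-edge constraint ``confines the edited bipartite edges to the neighbourhood of the endpoints'' of the added edge is not literally true: non-triangular cross edges can be deleted anywhere without affecting the count. The paper sidesteps this by monotonicity---once $e(S)+e(T)=1$ and $|S|,|T|$ are pinned down, $G$ is a \emph{subgraph} of some $K_{a,n-a}^+$, so $\lambda(G)\le\lambda(K_{a,n-a}^+)$, and one only needs to compare these supergraphs to the target (in one odd-$n$ subcase the count forces $G\subseteq K_{\frac{n-1}{2},\frac{n+1}{2}}^{+|}$, and that single-edge-deleted graph is compared instead). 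There is no ``finite list of candidate $H$'s'' in the literal sense; rather there is a finite list of \emph{overgraphs}.

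Second, the paper does not use your first-order expansion $\lambda(K_{a,n-a}^+)=\sqrt{a(n-a)}+\tfrac1a+O(n^{-2})$; it compares the cubic characteristic polynomials directly, showing for each relevant $a$ that the polynomial of $K_{a,n-a}^+$ exceeds that of $K_{\lceil n/2\rceil,\lfloor n/2\rfloor}^+$ on the interval where the latter is nonnegative. Your asymptotic route is valid in principle, but because all candidates agree with the target to within $\Theta(n^{-1})$, the $O(n^{-2})$ error must be made explicit to extract any concrete threshold on $n$; the exact polynomial comparison avoids this entirely and is what makes the final case analysis clean.
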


\subsection{Our approach and applications}

\noindent 
{\bf Our approach.} 
Our proofs of Theorems \ref{thm-main} 
and \ref{thm-main3} are quite different from 
that of Theorem \ref{thmNZ2021}. 
It is a classical spectral method 
to use the Perron eigenvector together with the walks of length two  
to deduce the structural properties of spectral extremal graphs; see, e.g., \cite{TT2017,Tait2019,CDT2023,LN2021outplanar,NZ2021,ZL2022jctb}. 
However, applying this spectral method turns out to be difficult for graphs with much more triangles or triangular edges. 
The key ingredient in our proof attributes to 
a supersaturation-stability result  (Theorem \ref{thm-far-bipartite}), which 
roughly says that if a graph is far from being bipartite, 
then it contains a large number of triangles. 
This result may be of independent interest. 
Although we used the stability method, 
we only need a weak bound $n\ge 5432$ exactly\footnote{It seems possible to obtain a slightly better bound. To avoid unnecessary and tedious calculations, we did not attempt to get the best bound on the order of the graph in our proof. }, 
instead of the strong condition that $n$ is sufficiently large. 
Apart from the supersaturation-stability, 
another technique used in this paper is 
a spectral technique developed by  
Cioab\u{a},  Feng, Tait and  Zhang \cite{CFTZ20}; 
see, e.g., \cite{LP2021,DKLNTW2021,Wang2022} for recent results. Furthermore, we will obtain some approximately 
structural results that describe the almost-extremal graphs with 
large spectral radius and few triangular edges.

\medskip 
\noindent 
{\bf Applications.}  
With additional efforts, 
 the method used in the proof of Theorems \ref{thm-main} 
 and \ref{thm-main3} could possibly be applied to treat some spectral extremal  problems in which the desired extremal graph contains a small number of triangles. 
Incidentally, an upper bound on the number of triangular edges eventually leads to a restriction on the number of triangles. 
In particular, 
we shall present four quick applications of our method.  
(i) The first application gives a short proof of a conjecture of Erd\H{o}s, which asserts that every $n$-vertex graph with more than $n^2/4$ edges contains more than $n/6$ triangles sharing a common edge;   
(ii) The second application allows us to simplify the proof of the main result 
of \cite{CFTZ20}, and it can also improve the bound on the order of graphs, 
which was previously obtained from the celebrated triangle removal lemma;   
(iii) The third application is to deduce the classical stability result on odd cycles. 
Our approach can get rid of the use of the Erd\H{o}s--Stone--Simonovits theorem, and it yields more explicit parameters;    
(iv) The last application  provides an alternative proof of Theorem \ref{thmNZ2021} and
gives the complete characterization of the spectral extremal graphs of Theorem \ref{thmNZ2021}. 
We postpone the detailed discussions to Subsection \ref{sec4-3}.

\medskip

\noindent 
{\bf Organization.}  
 In Section \ref{sec3}, we shall present 
 some computations on the spectral radius of the expected extremal graphs. As mentioned above, these graphs 
 reveal that the bound in Theorem \ref{thm-main} cannot be improved. 
 Moreover, we will show the spectral version of 
 the supersaturation for triangular edges (Propositions \ref{super-1} and \ref{thm-m}). 
 In Section \ref{sec4}, 
one of the key ideas in this paper, i.e., 
the supersaturation-stability (Theorems \ref{thm-CFS-2020} and \ref{thm-far-bipartite}),
 will be introduced. 
 As indicated above,  some applications 
of the supersaturation-stability method will be presented in this section.  
In Sections \ref{sec5} and \ref{sec6}, 
we will present the detailed proofs of Theorems \ref{thm-main} and 
 \ref{thm-main3}, respectively.  
After proving our results, we propose some related spectral extremal problems  involving 
the edges that occur in cliques or odd cycles.

\medskip 
\noindent 
{\bf Notation.} 
We usually write $G=(V,E)$ for a simple graph with vertex set 
$V=\{v_1,\ldots ,v_n\}$ and edge set $E=\{e_1,\ldots ,e_m\}$, where we admit $n=|V|$ and $m=|E|$. 
If $S\subseteq V$ is a subset of the vertex set, then 
$G[S]$ denotes the subgraph of $G$ induced by $S$, 
i.e., the graph on $S$ whose edges are those edges of $G$ 
with both endpoints in $S$. By convention, we denote $e(S)=e(G[S])$.  
We will write $G[S,T]$ for the induced 
subgraph of $G$
whose edges have one endpoint in $S$ and the other in $T$,  and similarly, we write $e(S,T)$ for the number of edges 
of $G[S,T]$.  
Let $N(v)$ be the set of vertices adjacent to a vertex $v$ and 
let $d(v)=|N(v)|$. 
Moreover, we denote $N_S(v)= N(v) \cap S$ 
and $d_S(v)=|N_S(v)|$ for simplicity. 
 We will write $t(G)$ for the number of triangles of $G$. 
For an integer $p\ge 3$, we write $k_p(G)$ for the number of cliques of order $p$  in $G$.

\section{Preliminaries}
\label{sec3}

\subsection{Computations for extremal graphs}

We will show that  Theorem \ref{thm-main} is the best possible. 
Recall that $K_{a,b}^+$ denotes the graph obtained from 
a complete bipartite graph $K_{a,b}$ by adding an edge to 
the vertex part of size $a$. 
The following three graphs have spectral radii larger than 
$\lambda (T_{n,2})$ and contain exactly 
$2 \lfloor n/2 \rfloor -1$ triangular edges. 
Moreover, these graphs have exactly $\lfloor n^2/4\rfloor$ 
edges.

 \begin{figure}[H]
\centering
\includegraphics[scale=0.85]{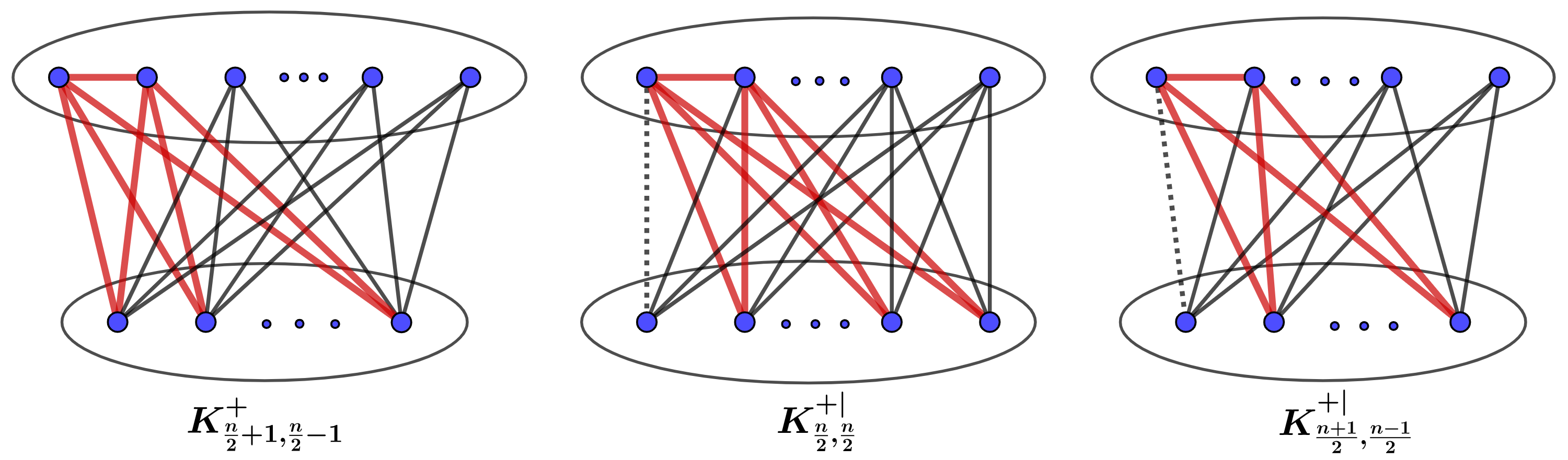} 
\caption{The graphs $K_{\frac{n}{2}+1,\frac{n}{2}-1}^+,
K_{\frac{n}{2} ,  \frac{n}{2} }^{+|}$ and 
$K_{ \frac{n+1}{2},\frac{n-1}{2}}^{+|}$.}
\label{fig-S2T1}
\end{figure}

\begin{lemma} \label{lem-pm-1} 
If $n\ge 4$ is even, then 
\[  \lambda (K_{\frac{n}{2}+1,\frac{n}{2}-1}^+) > \lambda (T_{n,2}). \] 
\end{lemma}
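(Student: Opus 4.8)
The plan is to use the equitable partition of $K^{+}_{\frac n2+1,\frac n2-1}$ determined by the extra edge. Write $a=\frac n2+1$ and $b=\frac n2-1$, so that $a+b=n$ and $ab=\frac{n^{2}}{4}-1$. Partition the vertices into three classes: the two endpoints $X$ of the added edge (both lying in the part of size $a$), the remaining $a-2$ vertices $Y$ of that part, and the $b$ vertices $Z$ of the other part. Directly from the definition of $K^{+}_{a,b}$, every vertex of $X$ has exactly one neighbour in $X$, none in $Y$, and $b$ in $Z$; every vertex of $Y$ has none in $X$, none in $Y$, and $b$ in $Z$; every vertex of $Z$ has $2$ in $X$, $a-2$ in $Y$, and none in $Z$. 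Hence the partition is equitable with quotient matrix
\[
B=\begin{pmatrix} 1 & 0 & b\\ 0 & 0 & b\\ 2 & a-2 & 0\end{pmatrix}.
\]
Since the graph is connected, $\lambda(K^{+}_{a,b})$ equals the Perron eigenvalue of $B$, i.e.\ the largest root of the monic characteristic polynomial $p(x)=\det(xI-B)=x^{3}-x^{2}-abx+b(a-2)$.

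The rest is an evaluation at a single point. Because $p$ is monic of degree $3$, we have $p(x)>0$ for every $x$ exceeding its largest real root; so it suffices to check $p\bigl(\tfrac n2\bigr)<0$, where $\tfrac n2=\lambda(T_{n,2})$ for even $n$. Substituting $x=\tfrac n2$, $ab=\tfrac{n^{2}}{4}-1$, and $a-2=\tfrac n2-1$ gives
\[
p\!\left(\tfrac n2\right)=\tfrac{n^{3}}{8}-\tfrac{n^{2}}{4}-\Bigl(\tfrac{n^{2}}{4}-1\Bigr)\tfrac n2+\Bigl(\tfrac n2-1\Bigr)^{2}=1-\tfrac n2,
\]
which is strictly negative for every $n\ge 4$. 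Consequently $\tfrac n2$ lies strictly below the largest root of $p$, that is, $\lambda(T_{n,2})<\lambda(K^{+}_{\frac n2+1,\frac n2-1})$, as claimed.

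There is no real obstacle here: the argument reduces to a $3\times 3$ determinant and a one-line substitution. The only points that deserve a sentence are that the three-class partition is genuinely equitable (immediate from the structure of $K^{+}_{a,b}$) and that $p\bigl(\tfrac n2\bigr)<0$ forces $\tfrac n2<\lambda$ (which uses that $\lambda$ is the largest root of $p$, a consequence of connectedness and Perron--Frobenius). Note that merely invoking ``adding an edge increases the spectral radius'' does not suffice, since $K_{\frac n2+1,\frac n2-1}$ is itself not $T_{n,2}$ and has spectral radius $\sqrt{n^{2}/4-1}<\tfrac n2$; one genuinely needs the quantitative computation to see that the gain from the edge outweighs the loss from unbalancing the parts. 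An equivalent route is a Rayleigh-quotient estimate using the test vector that is constant on each of $X$, $Y$, $Z$, which produces the same cubic; I would nonetheless present the quotient-matrix version, as it records the full characteristic polynomial and thereby streamlines the analogous computations for $K^{+|}_{\frac n2,\frac n2}$ and $K^{+|}_{\frac{n+1}{2},\frac{n-1}{2}}$.
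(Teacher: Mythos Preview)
Your proof is correct and follows essentially the same approach as the paper: both obtain the characteristic polynomial of the quotient of the obvious equitable partition and conclude by checking that its value at $\tfrac{n}{2}=\lambda(T_{n,2})$ equals $1-\tfrac{n}{2}<0$. You are simply more explicit about how the polynomial arises, whereas the paper states it ``upon computation''; your polynomial $p(x)=x^{3}-x^{2}-abx+b(a-2)$ coincides with the paper's $f_{1}(x)$ after substituting $a=\tfrac{n}{2}+1$, $b=\tfrac{n}{2}-1$.
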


\begin{proof}
Let $\mathbf{x}=(x_1,x_2,\ldots ,x_n)^T$ be a Perron eigenvector corresponding to $\lambda(K_{\frac{n}{2} +1, \frac{n}{2}-1}^+)$.
We partition the vertex set of $ K_{\frac{n}{2}+1, \frac{n}{2}-1}^+$ as $\Pi$:
\[   V(K_{\frac{n}{2}+1, \frac{n}{2}-1}^+ )=
X_1 \cup X_2 \cup Y, \]
 where $X_1=\{u_1,u_2\}$ forms an edge,
$X_1\cup X_2$ and $Y$ are vertex sets of $K_{\frac{n}{2}+1, \frac{n}{2}-1}$ with $|X_1|+ |X_2|= \frac{n}{2}+1$ and $|Y|=\frac{n}{2} -1$.
By comparing the neighborhoods, we can see that $x_{u_1}=x_{u_2}$,
all coordinates of the vector $\mathbf{x}$
corresponding to vertices of $X_2$  are equal
(the coordinates of vertices of $Y$ are equal).
Without loss of generality, we may assume that
$x_{u_1}=x_{u_2}=x$, $x_u=y$ for each $u\in X_2$, and $x_v=z$
for each $v\in Y$. Then
\[  \begin{cases}
\lambda x= x + (\frac{n}{2}-1) z, \\
\lambda y = (\frac{n}{2} -1)z, \\
\lambda z = 2x + (\frac{n}{2}-1)y.
\end{cases} \]
Thus, $\lambda(K_{\frac{n}{2}+1, \frac{n}{2}-1}^+)$ is the largest eigenvalue of
\[  B_{\Pi} =  \begin{bmatrix}
1 & 0 & \frac{n}{2}-1 \\
0 & 0 & \frac{n}{2}-1 \\
2 & \frac{n}{2}-1 & 0
\end{bmatrix}.  \] 
Upon computation, it follows that $\lambda (K_{\frac{n}{2}+1,\frac{n}{2}-1}^+) $ is
the largest root of 
\[ f_1(x)=  \mathrm{det}(xI_3- B_{\Pi}) = x^3 - x^2 + x - {(n^2x)}/{4}  + {n^2}/{4} - n + 1 .\]
Since $f_1(\frac{n}{2})= 1-\frac{n}{2} <0$ for every $n\ge 4$, 
we have $\lambda (K_{\frac{n}{2}+1,\frac{n}{2}-1}^+) > \lambda (T_{n,2})=\frac{n}{2}$. 
\end{proof}

We point out that the partition $\Pi$ is an equitable partition\footnote{Given a graph $G$, the vertex partition $\Pi: V(G)=V_1\cup V_2 \cup \cdots \cup V_k$ is called an {\it equitable partition} if, for each $u\in V_i$, $|N(u)\cap V_j | = b_{i,j}$ is a constant depending only on $i,j \,(1\le i, j\le k)$.} and $B_{\Pi}$ is called the {\it quotient matrix} of $\Pi$. 
It is well-known \cite{CRS2010} that 
 the spectral radius of a graph $G$ 
 is equal to the the largest eigenvalue of the quotient matrix $B_{\Pi}$ corresponding to the equitable partition $\Pi$.

\begin{lemma} \label{lem-equal-pm}
Let $G=K_{\frac{n}{2} ,  \frac{n}{2} }^{+|}$ be the graph obtained from
$K_{ \frac{n}{2},  \frac{n}{2}}$ by adding an edge $e_1$ to the part of size $ \frac{n}{2} $ and deleting an edge $e_2$ between two parts such that $e_2$ is incident to $e_1$. Then 
 \[  \lambda (K_{\frac{n}{2} ,  \frac{n}{2} }^{+|} )> 
\lambda (T_{n,2}). \] 
\end{lemma}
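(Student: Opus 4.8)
The key observation is that $G:=K_{\frac{n}{2},\frac{n}{2}}^{+|}$ has exactly the same number of edges as $T_{n,2}$: we add one edge $e_1$ inside a part and delete one cross-edge $e_2$, so $e(G)=e(T_{n,2})=n^2/4$. Consequently the all-ones vector $\mathbf{1}$ only gives the Rayleigh bound $\lambda(G)\ge 2e(G)/n=n/2=\lambda(T_{n,2})$ with equality in the quotient, so it cannot by itself prove a strict inequality. The plan is therefore to perturb $\mathbf{1}$ in the right direction. Label the parts $A=\{a_1,\dots,a_{n/2}\}$ and $B=\{b_1,\dots,b_{n/2}\}$, let $e_1=a_1a_2$ be the added edge and (without loss of generality, by the $a_1\leftrightarrow a_2$ symmetry of $K_{\frac n2,\frac n2}$) let $e_2=a_1b_1$ be the deleted cross-edge. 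Then $a_2$ is the unique vertex of degree $\frac n2+1$ and $b_1$ the unique vertex of degree $\frac n2-1$, while all other vertices have degree $\frac n2$. This degree pattern pins down the perturbation: I would take the test vector $z$ that equals $1$ on every vertex except $z_{a_2}=1+t$ and $z_{b_1}=1-t$, for a small $t>0$, moving a little weight onto the high-degree vertex and off the low-degree vertex.

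The computation is then short and mechanical. The cross-edges of $G$ form $K_{\frac n2,\frac n2}$ minus the single edge $a_1b_1$, so with $\sum_{a\in A}z_a=\frac n2+t$ and $\sum_{b\in B}z_b=\frac n2-t$ one gets
\[ z^{\top}A(G)z = 2\Bigl(\bigl(\tfrac n2+t\bigr)\bigl(\tfrac n2-t\bigr)-z_{a_1}z_{b_1}+z_{a_1}z_{a_2}\Bigr)=\tfrac{n^2}{2}-2t^2+4t, \]
while $\norm{z}^2=(n-2)+(1+t)^2+(1-t)^2=n+2t^2$. Therefore
\[ z^{\top}A(G)z-\tfrac n2\norm{z}^2 = 4t-(n+2)t^2 = t\bigl(4-(n+2)t\bigr)>0\qquad\text{for }0<t<\tfrac{4}{n+2}. \]
By the Rayleigh principle, $\lambda(G)\ge z^{\top}A(G)z/\norm{z}^2>n/2=\lambda(T_{n,2})$, which is what we want. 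Note this argument works for every even $n\ge 4$ and does not even use the hypothesis $n\ge 5432$.

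I do not expect a real obstacle here; the only point requiring care is to make sure the first-order term in $t$ is genuinely positive, and the degree sequence tells us exactly which vertices to up-weight and down-weight to achieve that. As an alternative — matching the style of the proof of Lemma \ref{lem-pm-1} — one can instead use that the partition of $V(G)$ into the five cells $\{a_1\},\{a_2\},A\setminus\{a_1,a_2\},\{b_1\},B\setminus\{b_1\}$ is equitable, so $\lambda(G)$ is the largest root of the characteristic polynomial of the corresponding $5\times 5$ quotient matrix; a direct expansion shows this polynomial equals $x\bigl(x^4-\tfrac{n^2}{4}x^2-(n-2)x+\tfrac{n^2}{2}-2n+1\bigr)$, and the quartic factor takes the value $1-n<0$ at $x=n/2$, hence has a root exceeding $n/2$, which must be $\lambda(G)$.
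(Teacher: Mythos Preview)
Your proof is correct. The alternative you sketch at the end---computing the characteristic polynomial of the $5$-cell equitable partition and evaluating the quartic factor at $x=n/2$---is exactly the paper's route: the paper states that $\lambda(K_{n/2,n/2}^{+|})$ is the largest root of $f_2(x)=x^4-(n^2/4)x^2-(n-2)x+n^2/2-2n+1$ and checks $f_2(n/2)=1-n<0$.

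Your primary argument, however, is genuinely different and worth highlighting. Rather than computing any polynomial, you perturb the all-ones vector along the axis of the degree imbalance (upweighting the degree-$(\tfrac n2+1)$ vertex $a_2$ and downweighting the degree-$(\tfrac n2-1)$ vertex $b_1$) and obtain a Rayleigh quotient strictly exceeding $n/2$ via a one-line calculation. This is more elementary and arguably more explanatory, since it makes visible \emph{why} the strict inequality holds---the first-order gain comes precisely from the degree asymmetry created by swapping a cross-edge for an in-part edge---whereas the characteristic-polynomial approach verifies the inequality after the fact. The trade-off is that the paper's method also pins down $\lambda(G)$ exactly, which could be useful if a quantitative lower bound were needed later.
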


\begin{proof}
By a similar method
as used in the proof of Lemma \ref{lem-pm-1}, we obtain that $\lambda (K_{\frac{n}{2} ,  \frac{n}{2} }^{+|} )$
is the largest root of
\[  f_2(x)=
x^4 - {(n^2x^2)}/{4}  - (n-2)x +1+ {n^2}/{2} -2n. \]
One can check that $f_2(\frac{n}{2})=1-n<0$ and hence $\lambda (K_{\frac{n}{2} ,  \frac{n}{2} }^{+|} )>\frac{n}{2} =
\lambda (T_{n,2})$.
\end{proof}

\begin{lemma} \label{odd-n-pm-one}
If $n\ge 5$ is odd and 
$G=K_{\frac{n+1}{2} , \frac{n-1}{2}}^{+|}$ is the graph obtained from
$K_{ \frac{n+1}{2},  \frac{n-1}{2}}$ by adding an edge $e_1$ to the part of size $ \frac{n+1}{2} $ and deleting an edge $e_2$ between two parts such that $e_2$ is incident to $e_1$, then 
\[  \lambda (K_{ \frac{n+1}{2},\frac{n-1}{2}}^{+|})
>   \lambda (T_{n,2}). \]
\end{lemma}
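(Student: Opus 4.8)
The plan is to mimic the proofs of Lemmas~\ref{lem-pm-1} and~\ref{lem-equal-pm}: exhibit an equitable partition of $G$, extract the polynomial whose largest root is $\lambda(G)$, and compare that root against $\lambda(T_{n,2})=\sqrt{\lfloor n^2/4\rfloor}=\frac12\sqrt{n^2-1}$. First I would set $a:=\frac{n+1}{2}$ and $b:=\frac{n-1}{2}$, write $(X,Y)$ for the bipartition of the underlying $K_{a,b}$ with $|X|=a$, and take $e_1=uv$ with $u,v\in X$ and $e_2=uw$ with $w\in Y$; by symmetry the particular choices of $u,v,w$ do not matter. Then $\pi=\bigl(\{u\},\{v\},X\setminus\{u,v\},\{w\},Y\setminus\{w\}\bigr)$ is an equitable partition of $G$ with quotient matrix
\[
B_\pi=\begin{pmatrix} 0&1&0&0&b-1\\ 1&0&0&1&b-1\\ 0&0&0&1&b-1\\ 0&1&a-2&0&0\\ 1&1&a-2&0&0\end{pmatrix},
\]
and, since $G$ is connected, $\lambda(G)$ is the Perron root of $B_\pi$. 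Because the rows $R_1,\dots,R_5$ of $B_\pi$ satisfy $R_2+R_4=R_3+R_5$, we have $\det B_\pi=0$, so $x$ divides $\det(xI-B_\pi)$; writing $\det(xI-B_\pi)=x\,f_3(x)$, the monic quartic $f_3$ has vanishing $x^3$-coefficient (as $\tr(B_\pi)=0$), and computing $\tr(B_\pi^2)=2ab$, $\tr(B_\pi^3)=6(b-1)$ together with the sum of the $4\times4$ principal minors of $B_\pi$ yields
\[
f_3(x)=x^4-ab\,x^2-2(b-1)\,x+\bigl(2ab-a-3b+1\bigr)=x^4-\frac{n^2-1}{4}x^2+(3-n)\,x+\frac{(n-1)(n-3)}{2}.
\]
Since $\lambda(G)>\frac{2e(G)}{n}>0$ is the largest eigenvalue of the irreducible nonnegative matrix $B_\pi$, it is the largest real root of $f_3$.

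To finish, I would evaluate $f_3$ at $x_0:=\lambda(T_{n,2})=\frac12\sqrt{n^2-1}$. As $x_0^2=\frac{n^2-1}{4}$, the quartic and quadratic terms cancel and $f_3(x_0)=(3-n)\,x_0+\frac{(n-1)(n-3)}{2}=(n-3)\bigl(\frac{n-1}{2}-x_0\bigr)$. For $n\ge5$ we have $n-3>0$, while $\frac{n-1}{2}<x_0$ because $(n-1)^2<n^2-1$ for $n>1$; hence $f_3(x_0)<0$. Since $f_3$ is monic of degree $4$ it is eventually positive, so by the intermediate value theorem it has a root in $(x_0,\infty)$, and therefore its largest root $\lambda(G)$ exceeds $x_0=\lambda(T_{n,2})$, which is exactly the claim.

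The argument turns out shorter than one might fear, since the even-degree part of $f_3$ vanishes at $x_0^2=\frac{n^2-1}{4}$, so no squaring step is needed and the inequality $\frac{n-1}{2}<x_0$ is immediate. The one point that demands care is pinning down $f_3$, especially its linear and constant coefficients: the linear coefficient depends only on $b$, and the constant coefficient $2ab-a-3b+1$ is genuinely asymmetric in $a$ and $b$ — this reflects that deleting $e_2$ destroys the symmetry between $u$ and $v$. I would guard against an arithmetic slip by specializing to $a=b=n/2$, which must recover the quartic $f_2$ of Lemma~\ref{lem-equal-pm}, and by checking the case $n=5$ directly. Note that one cannot shortcut by substituting the rational bound $n/2\ge\lambda(T_{n,2})$ into $f_3$: in fact $f_3(n/2)=\frac{1}{16}(n^2-8n+24)>0$ for every $n$, so only the substitution $x=x_0$ does the job.
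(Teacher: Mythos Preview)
Your proof is correct and follows the same approach as the paper: both derive the quartic
\[
f_3(x)=x^4-\tfrac{n^2-1}{4}x^2-(n-3)x+\tfrac{(n-1)(n-3)}{2}
\]
whose largest root is $\lambda(G)$, evaluate it at $x_0=\tfrac12\sqrt{n^2-1}$, and obtain $f_3(x_0)=\tfrac12(n-3)\bigl(n-1-\sqrt{n^2-1}\bigr)<0$. The paper simply asserts the form of $f_3$ ``by direct calculation,'' whereas you supply the equitable partition, the quotient matrix, and the trace computations that produce it; your row identity $R_2+R_4=R_3+R_5$ explaining why the quintic drops to a quartic is a nice touch the paper omits.
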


\begin{proof}
By a similar calculation, we know that $\lambda
(K_{ \frac{n+1}{2}, \frac{n-1}{2}}^{+|} )$
is the largest root of
\[  f_3(x)=x^4 - {(n^2x^2)}/{4} +x^2/4 
- (n-3)x + {n^2}/{2} - 2n+ {3}/{2}. \]
We can verify that 
\[ f_3\Bigl(\frac{1}{2}\sqrt{n^2-1} \Bigr) 
= \frac{1}{2}(n-3)\left( n-1-\sqrt{n^2-1} \right)<0, \] 
which implies $\lambda (K_{ \frac{n+1}{2},\frac{n-1}{2}}^{+|})
> \frac{1}{2}\sqrt{n^2-1} = \lambda (T_{n,2})$, as desired.
\end{proof}

The following lemma will be used in the proof of Theorem \ref{thm-main3}, and it 
provides 
a characterization of the spectral radius of the graph
$K_{ \lceil \frac{n}{2} \rceil, \lfloor \frac{n}{2} \rfloor}^+$.

\begin{lemma} \label{lem-21}
(a) If $n$ is even, then $\lambda(K_{\frac{n}{2}, \frac{n}{2}}^+)$
is the largest root of
\[  f(x)=x^3-x^2 - (n^2x)/4 + n^2/4 -n. \]
(b) If $n$ is odd, then $\lambda(K_{\frac{n+1}{2}, \frac{n-1}{2}}^+)$
is the largest root of
\[ g(x)=x^3 - x^2 + x /4 - (n^2x) /4 + n^2/4 - n + 3/4. \]
Consequently, for $n\ge 4$, we have
\begin{equation*} 
  \lambda^2 (K_{ \lceil \frac{n}{2} \rceil, \lfloor \frac{n}{2} \rfloor}^+) >\left\lfloor {n^2}/{4} \right\rfloor +2.
  \end{equation*}
\end{lemma}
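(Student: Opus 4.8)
The plan is to read off the characteristic polynomials in parts~(a) and~(b) from a three-cell equitable partition of $K^+:=K_{\lceil n/2\rceil,\lfloor n/2\rfloor}^+$, and then to bound $\lambda:=\lambda(K^+)$ from above by the elementary quantity $\lceil n/2\rceil+1$, which is enough to force $\lambda^2>\lfloor n^2/4\rfloor+2$.

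\textbf{The cubics.} Write $K^+$ with bipartition classes $A$ of size $a=\lceil n/2\rceil$ and $B$ of size $b=\lfloor n/2\rfloor$, where the extra edge $e_1=uv$ lies inside $A$, and take the partition $V_1=\{u,v\}$, $V_2=A\setminus\{u,v\}$, $V_3=B$. A direct check shows it is equitable: every vertex of $V_1$ has $1,0,b$ neighbours in $V_1,V_2,V_3$, every vertex of $V_2$ has $0,0,b$, and every vertex of $V_3$ has $2,a-2,0$. Hence the quotient matrix is
\[ \begin{pmatrix} 1 & 0 & b\\ 0 & 0 & b\\ 2 & a-2 & 0\end{pmatrix}, \]
whose characteristic polynomial expands to $x^3-x^2-ab\,x+b(a-2)$. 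Substituting $(a,b)=(n/2,n/2)$ for even $n$ yields $f(x)$, and $(a,b)=\bigl((n+1)/2,(n-1)/2\bigr)$ for odd $n$ yields $g(x)$. Since $K^+$ is connected, its quotient matrix is nonnegative and irreducible, so the spectral radius $\lambda(K^+)$ equals the largest eigenvalue of the quotient matrix, that is, the largest root of the cubic; this proves~(a) and~(b).

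\textbf{The consequence.} From $f(\lambda)=0$, after dividing through by $\lambda-1$ (which is positive because $K^+$ strictly contains $T_{n,2}=K_{n/2,n/2}$, so $\lambda>n/2>1$), one obtains the identity $\lambda^2=n^2/4+n/(\lambda-1)$; similarly $g(\lambda)=0$ gives $\lambda^2=(n^2-1)/4+(n-1)/(\lambda-1)$. In either case it therefore suffices to prove the coarse bound $\lambda<\lceil n/2\rceil+1$: then $\lambda-1<n/2$ in the even case and $\lambda-1<(n-1)/2$ in the odd case, so the last term exceeds $2$ and $\lambda^2>\lfloor n^2/4\rfloor+2$. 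To obtain $\lambda<\lceil n/2\rceil+1$, I would evaluate the cubic at that point; short computations give $f(n/2+1)=n(n-1)/2>0$ and $g\bigl((n+1)/2\bigr)=(n-1)(n-3)/4>0$ for $n\ge 4$. Then I would check that $f'$ (respectively $g'$) is positive on the whole ray $[\lceil n/2\rceil+1,\infty)$, the key input being that $x^2>n^2/4$ there. A cubic that is positive and strictly increasing beyond a point has no root beyond that point, so its largest root $\lambda$ lies strictly below $\lceil n/2\rceil+1$, as required.

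The only genuine subtlety, and hence the main point to handle carefully, is this localisation of $\lambda$: a cubic may have several roots in a short interval, so positivity of $f$ or $g$ at a single point does not by itself identify the largest root. The monotonicity argument above settles it; alternatively one may invoke the trace identity (the three roots sum to $1$) to rule out all three roots being as large as $\lceil n/2\rceil+1$. Everything else, namely the equitable-partition bookkeeping, the $3\times 3$ determinant, and the algebra turning $f(\lambda)=0$ into the displayed identity, is routine.
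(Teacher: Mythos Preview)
Your argument is correct, but note one slip in the write-up: the ``coarse bound'' you actually need and actually prove is $\lambda<\lfloor n/2\rfloor+1$, not $\lceil n/2\rceil+1$. For odd $n$ the latter would only give $\lambda-1<(n+1)/2$, which is not enough; what you in fact establish (by evaluating $g$ at $(n+1)/2$ and checking monotonicity there) is the sharper $\lambda<(n+1)/2$, i.e.\ $\lambda-1<(n-1)/2$, which is precisely what the identity $\lambda^2=(n^2-1)/4+(n-1)/(\lambda-1)$ requires. With that correction the proof goes through cleanly; the equitable-partition derivation of the cubics is routine and accurate.

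The paper takes a more direct route for the consequence. Rather than rewriting $f(\lambda)=0$ as $\lambda^2=n^2/4+n/(\lambda-1)$ and then bounding $\lambda$ from \emph{above}, it simply evaluates the cubic at the target value $x_0=\sqrt{\lfloor n^2/4\rfloor+2}$ and checks that $f(x_0)<0$ (respectively $g(x_0)<0$): one finds $f(\sqrt{n^2/4+2})=\sqrt{n^2+8}-n-2<0$ and $g(\sqrt{(n^2-1)/4+2})=\sqrt{n^2+7}-n-1<0$. Since both cubics are monic they tend to $+\infty$, so negativity at $x_0$ forces a root beyond $x_0$, and hence $\lambda>x_0$ with no need for the localisation/monotonicity step you flagged as the ``only genuine subtlety''. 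Your approach has the virtue of giving the exact formula $\lambda^2=\lfloor n^2/4\rfloor+c/(\lambda-1)$, which is conceptually pleasant and could yield sharper asymptotics; the paper's approach is quicker and sidesteps the upper bound on $\lambda$ entirely.
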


\begin{proof}
By calculation, we can verify that for even $n$,
\[  f (\sqrt{{n^2}/{4 }+2} ) = \sqrt{n^2+8} -n -2<0, \]
and for every odd $n$,
\[  g (\sqrt{ {(n^2-1)}/{4} +2} ) = \sqrt{n^2+7} -n-1 <0. \]
So we get $\sqrt{\left\lfloor {n^2}/{4} \right\rfloor +2} < 
 \lambda (K_{ \lceil \frac{n}{2} \rceil, \lfloor \frac{n}{2} \rfloor}^+) $. This completes the proof.
\end{proof}

\subsection{Spectral supersaturation for triangular edges}

Recall that $t(G)$ denotes the number of triangles in a graph $G$. 
A special case of an aforementioned 
result of Bollob\'{a}s and Nikiforov \cite{BN2007jctb} 
states that 
\[  t(G) \ge \frac{n^2}{12}\left( \lambda -\frac{n}{2}\right). \]
From this inequality, we can obtain a spectral supersaturation for triangular edges. We denote by $\lambda (G)/n$ the spectral density of a graph $G$. 
Informally, once the spectral density of a graph exceeds that 
of the bipartite Tur\'{a}n graph, we can not only find $2\lfloor \frac{n}{2}\rfloor -1$ triangular edges, but in fact a large number of triangular edges with positive density, i.e., there are $\Omega(n^2)$ triangular edges.  
This gives a phase transition type result. 

\begin{proposition} \label{super-1}
If $\varepsilon >0$ and $G$ is a graph on $n$ 
vertices with 
\[  \lambda (G)\ge \frac{n}{2} + \varepsilon n, \]
then $G$ contains at least $32^{-1/3}\varepsilon^{2/3}n^2$ 
triangular edges. 
\end{proposition}

\begin{proof}
First of all, 
it follows from $\lambda \ge \frac{n}{2} + \varepsilon n$ 
that  
 \[  t(G) \ge \frac{n^2}{12}\left( \lambda -\frac{n}{2}\right) 
 \ge \frac{\varepsilon}{12}n^3. \]  
  Let $m'$ be the number of triangular edges of $G$,  
and  let $G'$ be the subgraph of $G$ whose edges consist of all the triangular edges of $G$.  Clearly, we have 
$t(G)=t(G')$. 
Applying the Kruskal--Katona theorem (see, e.g., \cite[page 305]{Bollobas78}), we get  
$ t(G') \le  \frac{\sqrt{2}}{3}(m')^{3/2}$, 
which implies  $m' \ge 32^{-1/3}\varepsilon^{2/3}n^2$. So 
$G$ has at least $32^{-1/3}\varepsilon^{2/3}n^2$ 
triangular edges. 
\end{proof}

In our proofs of Theorems \ref{thm-main} 
and \ref{thm-main3}, 
we need to use the following lemma, 
which counts the number of triangles in terms of the 
spectral radius and the size of a graph. 

\begin{lemma}[See \cite{BN2007jctb,CFTZ20,NZ2021}] 
\label{thm-BN-CFTZ-NZ}
Let $G$ be a graph with $m$ edges. Then 
\begin{equation*} \label{eq-spectral-super-triangle}
t(G) \ge \frac{\lambda \bigl(\lambda^2 - m\bigr)}{3}. 
  \end{equation*}
  The equality holds if and only if $G$ is a complete bipartite graph. 
\end{lemma}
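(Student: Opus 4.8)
The plan is to argue directly from the spectrum of the adjacency matrix $A=A(G)$, rather than through the Perron eigenvector and walks of length two. Let $\lambda=\lambda_1\ge\lambda_2\ge\cdots\ge\lambda_n$ denote the eigenvalues of $A$, so that $\lambda_1=\lambda(G)$ is the spectral radius; in particular $\lambda_1\ge|\lambda_i|$, and hence $\lambda_1+\lambda_i\ge0$, for every $i$. First I would record the three standard moment identities obtained by counting closed walks of length $1,2,3$, namely
\[ \sum_{i=1}^{n}\lambda_i=\tr(A)=0,\qquad \sum_{i=1}^{n}\lambda_i^{2}=\tr(A^{2})=2m,\qquad \sum_{i=1}^{n}\lambda_i^{3}=\tr(A^{3})=6\,t(G). \]

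The heart of the argument is then a short algebraic reduction. Substituting $m=\tfrac12\sum_{i}\lambda_i^{2}$ and $t(G)=\tfrac16\sum_{i}\lambda_i^{3}$ and separating off the index $i=1$, one obtains the identity
\[ 6\,t(G)-2\lambda(\lambda^{2}-m)=\sum_{i=1}^{n}\lambda_i^{3}-2\lambda_1^{3}+\lambda_1\sum_{i=1}^{n}\lambda_i^{2}=\sum_{i\ge2}\lambda_i^{2}(\lambda_1+\lambda_i). \]
Each summand on the right is the product of the nonnegative numbers $\lambda_i^{2}$ and $\lambda_1+\lambda_i$, so the sum is nonnegative; this is exactly the desired inequality $t(G)\ge\lambda(\lambda^{2}-m)/3$.

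Finally, for the equality case I would analyse when $\sum_{i\ge2}\lambda_i^{2}(\lambda_1+\lambda_i)=0$: this forces $\lambda_i\in\{0,-\lambda_1\}$ for every $i\ge2$. If $\lambda_1=0$ then $G$ is edgeless, a degenerate complete bipartite graph for which both sides vanish; if $\lambda_1>0$ then, combined with $\tr(A)=0$, the spectrum must be $\{\lambda_1,0,\dots,0,-\lambda_1\}$, so $A$ has rank $2$. It then remains to identify such graphs, and this is the only step that needs genuine care. Writing $A=\lambda_1(xx^{\top}-zz^{\top})$ with $x$ the nonnegative unit Perron vector and $z$ the unit eigenvector for $-\lambda_1$, and using that on each connected component $x$ is strictly positive while $z$ exhibits the $\pm$ sign pattern of a bipartition, one reads off $A_{uv}=2\lambda_1x_ux_v>0$ when $u,v$ lie in opposite parts and $A_{uv}=0$ when they lie in the same part; hence every nontrivial component is complete bipartite and $G$ is a complete bipartite graph together with isolated vertices. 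Conversely $K_{a,b}$ has $\lambda^{2}=ab=m$ and $t(K_{a,b})=0$, so equality indeed holds for these graphs. I expect the inequality itself to be immediate once the moment identities are written down; the mild obstacle is the equality analysis, and specifically the structural fact that a graph whose adjacency matrix has rank $2$ is complete bipartite (up to isolated vertices).
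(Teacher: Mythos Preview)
Your approach is essentially identical to the paper's: the paper also writes $t(G)=\tfrac{1}{6}\sum_i\lambda_i^3$ and observes $t(G)-\tfrac{1}{3}\lambda_1(\lambda_1^2-m)=\tfrac{1}{6}\sum_{i\ge2}(\lambda_1+\lambda_i)\lambda_i^2\ge0$. Your equality analysis is actually more detailed than what the paper supplies (it simply attributes the characterization to Ning--Zhai), and it is correct; the cleanest way to justify the ``$\pm$ sign pattern'' of $z$ is to note that the diagonal condition $A_{vv}=0$ in $A=\lambda_1(xx^\top-zz^\top)$ forces $z_v^2=x_v^2$, which then gives the bipartition directly.
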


The inequality can be written as the following versions: 
\begin{equation*} 
   \lambda^3 \le 3t + m \lambda 
\quad \Leftrightarrow \quad {t\ge \frac{1}{3}\lambda (\lambda^2 - m)  } 
\quad \Leftrightarrow \quad m \ge \lambda^2- \frac{3t}{\lambda}. 
\end{equation*}
This inequality was firstly published by 
Bollob\'{a}s and Nikiforov  as a special case 
of their result \cite[Theorem 1]{BN2007jctb}, 
and it was independently proved by Cioab\u{a},  Feng, 
Tait and Zhang \cite{CFTZ20}.  
The case of equality was characterized by Ning and Zhai \cite{NZ2021}.

\medskip 
From Lemma \ref{thm-BN-CFTZ-NZ}, we can see that 
every graph with  $\lambda (G) > \sqrt{m}$ contains a triangle. 
Next, we show a spectral supersaturation 
on the number of triangular edges.  

\begin{proposition} \label{thm-m}
If  $\varepsilon >0$ and $G$ is a graph with $m$ edges and 
\[ \lambda (G) \ge (1+ \varepsilon ) \sqrt{m}, \]    
then $G$ contains more than $2^{1/3}\varepsilon^{2/3}m$ triangular edges. 
\end{proposition}

\begin{proof} 
Since $\lambda \ge (1+ \varepsilon ) \sqrt{m}$, 
Lemma \ref{thm-BN-CFTZ-NZ}
 implies 
 \[  t(G) \ge \frac{\lambda (\lambda^2-m)}{3} >  
  \frac{2\varepsilon }{3} m^{3/2}. \]  
  Let $G'$ be the subgraph of $G$ whose edges consist of all the triangular edges of $G$.  We denote $m'=e(G')$.   
By the Kruskal--Katona theorem (see \cite[page 305]{Bollobas78}), we have   
$ t(G') \le \frac{\sqrt{2}}{3} (m')^{3/2}$. Then we get $m' > 2^{1/3}\varepsilon^{2/3}m$, and 
$G$ has more than $2^{1/3}\varepsilon^{2/3}m$ 
triangular edges. 
\end{proof}

\section{The supersaturation-stability method}

\label{sec4}

\subsection{The Lov\'{a}sz--Simonovits stability}

To prove and generalize the Erd\H{o}s conjecture on 
triangle-supersaturated graphs, Lov\'{a}sz and Simonovits \cite{LS1975} 
proved a stability result, 
and a much more general theorem in \cite{LS1983}, 
 the simplest form of which is the following: 

\begin{theorem}[Lov\'{a}sz--Simonovits, 1975]
For any constant $C>0$, there exists an $\varepsilon >0$ 
such that if $|k|< \varepsilon n^2$ and $G$ is an $n$-vertex graph with 
$\lfloor n^2/4 \rfloor +k$ edges and fewer than 
$C|k| n$ triangles, then one can remove $O(|k|)$ edges from 
$G$ to get a bipartite graph. 
\end{theorem}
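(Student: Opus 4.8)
The plan is to run the classical \emph{max-cut stability} argument. Fix a bipartition $V(G)=A\cup B$ maximizing $e(A,B)$; then $d_B(u)\ge d_A(u)$ for every $u\in A$ and $d_A(w)\ge d_B(w)$ for every $w\in B$. Write $t:=e(A)+e(B)$ for the number of \emph{wrong} edges, so that deleting these $t$ edges makes $G$ bipartite; it therefore suffices to prove $t\le C'|k|$ for some $C'=C'(C)$. We may assume $k\ge1$: the case $k\le-1$ is analogous, and $k=0$ is vacuous since then the hypothesis would require fewer than $0$ triangles. From $m=e(A,B)+t$, $e(A,B)\ge2e(A)$ and $e(A,B)\ge2e(B)$ one gets $e(A,B)\ge m/2$, so $|A|\,|B|\ge m/2$ and both parts have order $\Theta(n)$; moreover the number of \emph{missing} crossing edges satisfies $D:=|A|\,|B|-e(A,B)\le\lfloor n^2/4\rfloor-(m-t)=t-k\le t$.

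The engine is the triangle count
\begin{equation*}
t(G)\ \ge\ \sum_{uv\in E(A)}|N_B(u)\cap N_B(v)|\ +\ \sum_{uv\in E(B)}|N_A(u)\cap N_A(v)|,
\end{equation*}
where the right side counts, each exactly once and without overlap, the triangles having exactly two vertices in one part. The bound $|N_B(u)\cap N_B(v)|\ge d_B(u)+d_B(v)-|B|$ is too lossy alone, so one refines it in two ways. First, one bounds the maximum internal degree $\Delta_A:=\max_{u\in A}d_A(u)$: a vertex of internal degree $\Delta_A$ lies in $\Omega(\Delta_A^{2})$ triangles once $\Delta_A\gtrsim\sqrt{D}$ (again using $d_B\ge d_A$), whence $\Delta_A=O(\sqrt{t(G)}+\sqrt{D})$, and symmetrically for $B$. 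Second, one sets aside the \emph{exceptional} vertices $u\in A$ with $d_B(u)<\tfrac{3}{4}|B|$; there are at most $4D/|B|$ of these, so at most $(4D/|B|)\,\Delta_A$ wrong edges of $A$ meet one, while every other wrong edge of $A$ has $|N_B(u)\cap N_B(v)|\ge|B|/2=\Omega(n)$ and hence yields $\Omega(n)$ distinct triangles. Putting the pieces together gives $t(G)\gtrsim n\,t-(\text{error})$ with the error controlled by $D$ and $\Delta_A$, which forces $t=O(|k|)$ in the regime where $|k|$ and $t$ are not too large.

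The main obstacle is to make this argument yield the \emph{linear} bound $O(|k|)$, not merely an $o(n^{2})$ one. A priori the error term is of order $D\le t$, comparable to the main term, and it can only be beaten down by exploiting the max-cut inequality: a vertex $u$ with large $\bar d_B(u):=|B|-d_B(u)$ is forced by $d_A(u)\le d_B(u)$ to have \emph{small} internal degree, so the exceptional vertices carry only $O(|k|)$ wrong edges rather than $\Theta(t)$. Squeezing out this linear estimate is precisely the delicate part of the Lov\'asz--Simonovits proof.

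Finally, two regimes must be dealt with separately. If $t=\Omega(n^{2})$ --- i.e.\ $G$ is far from bipartite --- then $t(G)<C|k|n<C\varepsilon n^{3}$, so the triangle removal lemma turns $G$ triangle-free after deleting at most $\eta(\varepsilon)n^{2}$ edges with $\eta(\varepsilon)\to0$, and Erd\H{o}s--Simonovits triangle-free stability then keeps $G$ within $\eta'(\varepsilon)n^{2}$ of bipartite; choosing $\varepsilon$ small enough rules out $t=\Omega(n^{2})$ with the constant implicit above. For the remaining range one runs a progressive induction on $n$: delete a minimum-degree vertex $v$, apply the statement to $G-v$ (which has $\lfloor(n-1)^{2}/4\rfloor+k'$ edges with $k'=k-d(v)+\lfloor n/2\rfloor$), and re-insert $v$; since $G-v$ is then nearly complete bipartite between its two parts, all but $O((t_v+|k'|)/n)$ of $v$'s neighbours fall on one side (here $t_v$ is the number of triangles through $v$), so $v$ can be replaced at cost $O((t_v+|k|)/n)$, and these costs sum to $O(|k|)$ because $\sum_v t_v=3\,t(G)$. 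Keeping the excess parameter $k$ under control through these deletions, so that the induction hypothesis keeps applying and the bound stays linear, is the bookkeeping that makes the whole scheme go through.
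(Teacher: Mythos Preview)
The paper does not prove this theorem. It is quoted in Section~\ref{sec4} as a known result of Lov\'asz and Simonovits \cite{LS1975,LS1983}, with no argument supplied; the paper's own contribution in that section is the elementary supersaturation-stability lemma (Theorem~\ref{thm-far-bipartite}), which is what is actually used downstream. So there is no ``paper's own proof'' to compare your proposal against.

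On the proposal itself: the max-cut framework and the lower bound on $t(G)$ via common neighbours across the cut are the right starting moves and are in the spirit of the original Lov\'asz--Simonovits argument. But what you have written is an outline, not a proof. You explicitly concede that ``squeezing out this linear estimate is precisely the delicate part'' without doing it, and the progressive-induction paragraph at the end is pure bookkeeping-by-assertion: you never verify that the parameter $k'$ stays in the allowed range $|k'|<\varepsilon(n-1)^2$ after deleting a minimum-degree vertex, nor that the accumulated reinsertion costs really telescope to $O(|k|)$. Invoking the triangle removal lemma to kill the $t=\Omega(n^2)$ regime is also off-target: the Lov\'asz--Simonovits result predates the removal lemma and is fully quantitative, whereas your route would give no effective dependence of $\varepsilon$ on $C$. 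If you want an actual proof, you need to carry out the linear error estimate directly rather than deferring it.
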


It was shown in \cite{LS1983}  that if $G$ is an $n$-vertex graph with 
$e(G)=(1-\frac{1}{x})\frac{n^2}{2}$ edges, where $x > 1$ is a real number, 
then for any integer $p\le x+1$, 
the number of $p$-cliques satisfies $k_p(G)\ge {x \choose p} (\frac{n}{x})^p$; 
see, e.g., \cite[p. 449]{Lov1979} for a detailed proof.  
In the following, we introduce a more general theorem on stability.  
Let $T_{n,p}$ denote the $p$-partite Tur\'{a}n graph on $n$ vertices, that is,  $T_{n,p}$ is a complete $p$-partite graph 
 whose parts have sizes as equal as possible.  

\begin{theorem}[Lov\'{a}sz--Simonovits, 1983]
Let $C>0$ be an arbitrary constant. 
There exist constants $\delta >0$ and $C' >0$ such that 
if $1\le k < \delta n^2$ and $G$ is an $n$-vertex graph with 
$e(G)=(1-\frac{1}{x})\frac{n^2}{2}$, and $p\le x+1$ is an integer 
satisfying $e(G)=e(T_{n,p}) +k$  and 
\[ k_p(G) < {x \choose p} \left( \frac{n}{x}\right)^p + Ckn^{p-2}, \]
then $G$ can be made $\lfloor x \rfloor$-partite 
by removing at most $C'k$ edges. 
\end{theorem}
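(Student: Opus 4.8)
The statement above is the clique–stability theorem of Lov\'{a}sz and Simonovits, and the plan I would follow mirrors their method of \emph{progressive induction} from \cite{LS1975,LS1983}, run simultaneously on $n$ and on $p$, with the simple stability statement quoted just above (triangles versus bipartiteness) serving as the base case $p=3$. Fix $C$; choose $\delta$ small and $C'$ large at the end. Argue by contradiction: let $G$ be a counterexample minimizing $n$, and among those minimizing $e(G)$. First, the hypotheses $1\le k<\delta n^2$ and $e(G)=e(T_{n,p})+k=(1-\tfrac1x)\tfrac{n^2}{2}$ force $x$ to lie within $O(\delta)$ of the integer $p$, so $\lfloor x\rfloor\in\{p-1,p\}$; moreover the Lov\'{a}sz--Simonovits lower bound $k_p(H)\ge\binom{y}{p}(n'/y)^p$ for every $H$ on $n'$ vertices with $e(H)=(1-\tfrac1y)\tfrac{n'^2}{2}$ (quoted above) is always available, so the hypothesis on $k_p(G)$ says precisely that $G$ exceeds this extremal count by at most $Ckn^{p-2}$.

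The first reduction is a cleaning step. If $G$ has a vertex $v$ with $d(v)$ much smaller than $(1-\tfrac1{\lfloor x\rfloor})n$, I would pass to $G-v$: recomputing the triple $(n-1,x',k')$ one checks that $k'\le k$ up to an additive absolute constant and that the $k_p$-hypothesis survives (deleting $v$ removes at least as large a fraction of the excess as of the extremal count), so $G-v$ is a smaller counterexample, contradicting minimality. Symmetrically, a vertex $v$ of degree much \emph{larger} than $(1-\tfrac1{\lfloor x\rfloor})n$ is impossible: by edge counting the link $G_v:=G[N(v)]$ would then carry more than $e(T_{d(v),\lfloor x\rfloor-1})$ edges, hence by the $(p-1)$-case of the Lov\'{a}sz--Simonovits lower bound more than the extremal number of copies of $K_{p-1}$, and since every such $K_{p-1}$ together with $v$ is a copy of $K_p$, this would push $k_p(G)$ far past its allowed value. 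So we may assume every vertex of $G$ has degree $(1-\tfrac1{\lfloor x\rfloor})n\pm o(n)$.

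With all degrees controlled, the heart of the argument is localization to links. Each $G_v$ has $d(v)=(1-\tfrac1{\lfloor x\rfloor})n\pm o(n)$ vertices and, using the min-degree bound once more, $e(G_v)\ge(1-\tfrac1{\lfloor x\rfloor-1})\tfrac{d(v)^2}{2}-o(n^2)$; combining the identity $\sum_v k_{p-1}(G_v)=p\,k_p(G)$ with the $(p-1)$-clique lower bound applied to each $G_v$, an averaging argument shows that all but $o(n)$ of the links exceed the extremal $K_{p-1}$-count by only $O(kn^{p-3})$, i.e.\ they satisfy the theorem with $p$ replaced by $p-1$ and ``excess'' $O(k/n)$. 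By the inductive hypothesis each such $G_v$ becomes $(p-2)$-partite after deleting $O(k/n)$ edges; fix such a partition of $N(v)$. Then one stitches: start from one good vertex $v_0$ (its non-neighbours forming one class, the $p-2$ link-classes the others); for every other good vertex, comparing $K_{p-2}$-counts inside the common neighbourhood forces its link-partition to agree with that of $v_0$ up to $O(k/n)$ misplaced vertices, which lets one transport the labels coherently; placing the $o(n)$ exceptional vertices greedily and bounding monochromatic edges one at a time yields a $(p-1)$-colouring of $V(G)$ with $O(k)$ monochromatic edges, as required (the subcase $\lfloor x\rfloor=p$ handled by the same scheme with one extra class).

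I expect the stitching step to be the main obstacle: the per-link error of order $k/n$ must not compound over the $\Theta(n)$ links into something of order $kn$ rather than $k$, which requires a genuine rigidity argument showing the ``cluster structure'' of $G$ is essentially \emph{unique}, not just approximately determined on each neighbourhood, together with a coherent labelling of the $p-1$ classes across overlapping links. A secondary nuisance is the boundary behaviour of $x$ near $p$ and the two possibilities $\lfloor x\rfloor\in\{p-1,p\}$, which must be absorbed uniformly into $\delta$ and $C'$; this is where one uses $k\ge 1$, so that the allowed excess $Ckn^{p-2}$ dominates the $O(n^{p-2})$ rounding errors from the non-divisibility of $n$ by $p$. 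One could instead first invoke the Erd\H{o}s--Simonovits stability theorem to get that $G$ is $o(n^2)$-close to being $\lfloor x\rfloor$-partite and then bootstrap via the ``a misplaced edge spans $\Omega(n^{p-2})$ extra copies of $K_p$'' counting, but the bootstrap meets exactly the same compounding-of-errors difficulty.
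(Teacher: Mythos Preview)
The paper does not prove this theorem: it is quoted without proof as background from \cite{LS1983}, so there is no ``paper's own proof'' to compare against. Your sketch is an outline of the original Lov\'{a}sz--Simonovits progressive-induction scheme, and the overall architecture (degree cleaning, passing to links $G_v=G[N(v)]$, applying the $(p-1)$-case inductively, then reassembling a global $(p-1)$-partition) is indeed the shape of their argument.

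That said, what you have written is a plan rather than a proof, and you correctly identify the point at which it is not yet one. The stitching step is the genuine content: you need that the link-partitions of overlapping neighbourhoods agree on all but $O(k/n)$ vertices \emph{in a way that is transitive}, so that a single global colouring can be read off with only $O(k)$ monochromatic edges in total. Your averaging argument gives only that \emph{most} links are well-behaved, and you have not explained the rigidity mechanism that prevents the per-link errors from accumulating to order $kn$; the phrase ``comparing $K_{p-2}$-counts inside the common neighbourhood forces its link-partition to agree'' hides exactly the lemma that does the work. A second soft spot is the degree-cleaning: when you delete a low-degree vertex you assert ``$k'\le k$ up to an additive absolute constant and the $k_p$-hypothesis survives'', but the parameter $x$ shifts as well, and checking that the inequality $k_p(G-v)<\binom{x'}{p}(\tfrac{n-1}{x'})^{p}+Ck'(n-1)^{p-2}$ actually holds with the \emph{same} constant $C$ requires a careful computation you have not indicated. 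None of this is wrong in spirit, but as it stands the proposal is a faithful roadmap of \cite{LS1983} with the two hardest turns left as exercises.
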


The application of the Lov\'{a}sz--Simonovits stability can be replaced here by an easy application 
of the graph removal lemma \cite{CF2013} 
and the Erd\H{o}s--Simonovits stability \cite{Sim1966}. 
The former result states that for every $\varepsilon >0$ and graph $H$ on $h$ vertices, there exists $\delta =\delta (H,\varepsilon)>0$ such that every $n$-vertex graph with at most $\delta n^h$ copies of $H$ can be made $H$-free by removing at most $\varepsilon n^2$ edges. This result was initially proved using the Szemerédi Regularity Lemma, i.e., the graph regularity method. The latter result says that for every $\varepsilon >0$ and graph $H$ with $\chi (H)=r+1\ge 3$, there exist $n_0$ and $\delta >0$ such that if $G$ is an $H$-free graph on $n\ge n_0$ vertices with $e(G)\ge (1- \frac{1}{r}- \delta)\frac{n^2}{2}$, then $G$ can be made $r$-partite by removing at most $\varepsilon n^2$ edges; see \cite{Fur2015} for an alternative proof. 
 For completeness, we present the following supersaturation-stability theorem. 
 In addition, we refer the readers to \cite{ADGS2015,CFS2020, FHW2021} 
for some similar applications 
on extremal set theory and Ramsey theory.  

\begin{theorem} \label{thm-CFS-2020}
For any $\varepsilon >0$ and  $r\ge 2$, 
there exist $\eta >0, \delta >0$ and $n_0\in \mathbb{N}$ such that if $G$ 
is a graph on $n\ge n_0$ vertices with at most 
$\eta n^{r+1}$ copies of $K_{r+1}$ and 
\[  e(G) \ge \left(1-\frac{1}{r} -\delta \right)\frac{n^2}{2}, \]  
then $G$ can be made $r$-partite by removing at most 
$\varepsilon n^2$ edges. 
\end{theorem}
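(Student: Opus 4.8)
The plan is to derive Theorem \ref{thm-CFS-2020} by chaining together two classical black boxes: the removal lemma for $K_{r+1}$ and the Erd\H{o}s--Simonovits stability theorem. Fix $\varepsilon>0$ and $r\ge 2$. First I would invoke stability in the following quantitative form: there exist $\delta'>0$ and $n_1\in\mathbb{N}$ so that every $K_{r+1}$-free graph $H$ on $n\ge n_1$ vertices with $e(H)\ge(1-\tfrac1r)\tfrac{n^2}{2}-\delta' n^2$ has edge-edit distance at most $\tfrac{\varepsilon}{2}n^2$ from a suitably aligned copy of the Tur\'an graph $T_{n,r}$ on its vertex set; in particular, deleting those at most $\tfrac{\varepsilon}{2}n^2$ edges of $H$ that fall outside the aligned $T_{n,r}$ turns $H$ into a subgraph of $T_{n,r}$, hence into an $r$-partite graph. (The constant $\delta'$ is chosen small enough that the $o(n^2)$ edit distance supplied by the stability theorem is below $\tfrac{\varepsilon}{2}n^2$.) Having fixed $\delta'$, I set $\gamma:=\min\{\varepsilon/2,\ \delta'/2\}$ and feed it to the $K_{r+1}$-removal lemma: there are $\eta>0$ and $n_2\in\mathbb{N}$ such that any $n$-vertex graph with $n\ge n_2$ containing at most $\eta n^{r+1}$ copies of $K_{r+1}$ can be made $K_{r+1}$-free by deleting at most $\gamma n^2$ edges.

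Now I would take $\delta:=\delta'$ and $n_0:=\max\{n_1,n_2\}$, and let $G$ be any graph satisfying the hypotheses. Applying the removal lemma produces a $K_{r+1}$-free subgraph $G'\subseteq G$ obtained by deleting at most $\gamma n^2$ edges, so $e(G')\ge e(G)-\gamma n^2\ge(1-\tfrac1r)\tfrac{n^2}{2}-(\tfrac{\delta}{2}+\gamma)n^2\ge(1-\tfrac1r)\tfrac{n^2}{2}-\delta' n^2$, where the last step uses $\tfrac{\delta}{2}+\gamma=\tfrac{\delta'}{2}+\gamma\le\delta'$. Since $G'$ is $K_{r+1}$-free and $n\ge n_1$, the stability step applies to $G'$ and shows that at most a further $\tfrac{\varepsilon}{2}n^2$ edges need to be deleted to make $G'$ $r$-partite. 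Combining the two deletion steps, $G$ becomes $r$-partite after removing at most $\gamma n^2+\tfrac{\varepsilon}{2}n^2\le\tfrac{\varepsilon}{2}n^2+\tfrac{\varepsilon}{2}n^2=\varepsilon n^2$ edges, using $\gamma\le\varepsilon/2$. This is exactly the conclusion.

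There is no genuine obstacle beyond arranging the quantifiers in the right order: $\delta'$ (from stability) must be chosen before $\gamma$ and $\eta$ (from the removal lemma), because the number of edges the stability step is allowed to remove is pinned down in advance, and only afterwards can one force the removal lemma to introduce a sufficiently small error. I would also point out that the $n_0$ emerging from this argument is astronomically large, since the removal lemma rests on the Szemer\'edi regularity lemma; that is precisely why Theorem \ref{thm-CFS-2020} is used here only as a qualitative tool, and why in the triangle case $r=2$ the later sections bypass it entirely in favour of the hands-on supersaturation-stability estimate of Theorem \ref{thm-far-bipartite}, which is what makes the explicit threshold $n\ge 5432$ in Theorems \ref{thm-main} and \ref{thm-main3} possible.
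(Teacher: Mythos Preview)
Your proposal is correct and follows essentially the same approach as the paper: apply the $K_{r+1}$-removal lemma to pass to a $K_{r+1}$-free subgraph with nearly the same edge count, then invoke Erd\H{o}s--Simonovits stability to make that subgraph $r$-partite with few further deletions. The paper's own proof is just a three-sentence sketch of exactly this two-step argument, and your version supplies the quantifier bookkeeping (choosing $\delta'$ from stability first, then $\gamma$ and $\eta$ from removal) that the paper leaves implicit.
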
 

\begin{proof}
The graph removal lemma 
 allows us to pass to a $K_{r+1}$-free subgraph 
$G'$ of $G$ which still has very many edges. At this point, 
we can apply the standard stability theorem 
to deduce that $G'$ is nearly $r$-partite. 
Since we deleted few edges to go from $G$ to $G'$, 
we must also have that $G$ is nearly $r$-partite. 
\end{proof}

Although such an analogue can easily be obtained via the graph removal lemma, this gives 
bounds which are far from sufficient for our purposes. 
In the next subsection, we shall give a more efficient stability result 
so that we can calculate some explicit constants.

\subsection{A generalized Moon--Moser inequality}

First of all, 
we shall present a result of Moon and Moser \cite{MM1962}, 
which counts the minimum number of triangles in a graph with given order and size. 
Alternative proofs can also be found in \cite[p. 297]{Bollobas78} and \cite[p. 443]{Lov1979}. 

\begin{theorem}[Moon--Moser, 1962] \label{thm-MM-k3}
Let $G$ be a graph on $n$ vertices with $m$ edges. 
Then  
\[ t(G) \ge \frac{4m}{3n}\left(m- \frac{n^2}{4} \right), \]
where the equality holds if and only if $G=T_{n,r}$ with $r$ dividing $n$.  
\end{theorem}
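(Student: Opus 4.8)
The plan is to run the classical double-counting argument of Moon and Moser, made quantitative via inclusion--exclusion on neighbourhoods together with a single application of the Cauchy--Schwarz inequality, and then to read off the extremal graphs from the two equalities that must hold simultaneously.

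First I would count triangles through each edge. For an edge $uv\in E(G)$ the number of triangles containing $uv$ is $|N(u)\cap N(v)|$, and since $N(u)\cup N(v)\subseteq V(G)$ we have $|N(u)\cap N(v)|\ge d(u)+d(v)-n$. Summing over all edges, and using that each triangle is counted once for each of its three edges,
\[
 3t(G)=\sum_{uv\in E(G)}|N(u)\cap N(v)|\ \ge\ \sum_{uv\in E(G)}\bigl(d(u)+d(v)-n\bigr)=\sum_{v\in V(G)}d(v)^2-mn .
\]
Then I would bound $\sum_v d(v)^2$ from below by Cauchy--Schwarz (equivalently the power-mean inequality): $\sum_v d(v)^2\ge \frac{1}{n}\bigl(\sum_v d(v)\bigr)^2=\frac{4m^2}{n}$. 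Combining the two estimates gives $3t(G)\ge \frac{4m^2}{n}-mn=\frac{4m}{n}\bigl(m-\frac{n^2}{4}\bigr)$, which is the asserted inequality after dividing by $3$.

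For the equality characterisation I would prove the (more precise) statement that equality holds if and only if $G=T_{n,r}$ for some $r$ with $r\mid n$, which in particular pins down $m=e(T_{n,r})$. I would examine when both estimates above are tight at once. Equality in Cauchy--Schwarz forces $G$ to be $d$-regular; and since $3t(G)=\sum_{uv}|N(u)\cap N(v)|$ is an identity, equality in the chain forces $n-|N(u)\cup N(v)|=0$, i.e., $N(u)\cup N(v)=V(G)$ for every edge $uv$ (so in particular $d(u)+d(v)\ge n$ and, by regularity, $d\ge n/2$). I would then argue that under this last condition the relation ``$x=y$ or $xy\notin E(G)$'' on $V(G)$ is an equivalence relation: reflexivity and symmetry are immediate, and transitivity is exactly the contrapositive of the edge condition, since if $xy\notin E(G)$, $yz\notin E(G)$ but $xz\in E(G)$, then $y\notin N(x)\cup N(z)$, contradicting $N(x)\cup N(z)=V(G)$. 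Hence the complement of $G$ is a vertex-disjoint union of cliques, i.e., $G$ is complete multipartite; $d$-regularity then forces every part to have size $n/r$, so $r\mid n$ and $G=T_{n,r}$. Conversely, a direct computation of $e(T_{n,r})=\frac{(r-1)n^2}{2r}$ and of $t(T_{n,r})$ for $r\mid n$ verifies that equality is attained (the degenerate case $r=1$, $m=0$, being trivial).

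The main obstacle is not any single step but keeping the equality discussion watertight: one must notice that tightness throughout the chain actually forces $d(u)+d(v)\ge n$ on every edge, so that replacing $|N(u)\cap N(v)|$ by $d(u)+d(v)-n$ loses nothing there; and one must dispose of the small-$m$ regime, where the right-hand side is non-positive and $t(G)\ge 0$ shows equality can hold only for the empty graph (the case $r=1$). Everything else is routine.
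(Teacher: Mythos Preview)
Your argument is correct and is exactly the classical Moon--Moser double count: bound each $|N(u)\cap N(v)|$ from below by $d(u)+d(v)-n$, sum to $\sum_v d(v)^2-mn$, and apply Cauchy--Schwarz. Your equality analysis is also sound, and in fact you prove the sharper characterisation $G=T_{n,r}$ with $r\mid n$ (the paper's phrasing ``$m=e(T_{n,r})$'' should be read in this stronger sense).

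There is nothing to compare against, however: the paper does not supply its own proof of this theorem. It is quoted as a classical result, with the remark that proofs can be found in Bollob\'as~\cite{Bollobas78} and Lov\'asz~\cite{Lov1979}; the paper then uses it only as motivation before proving the more general supersaturation--stability lemma (Theorem~\ref{thm-far-bipartite}), whose proof follows a related but distinct route via the identity $\sum_{w\in N_v^c} d(w)=e(N_v^c)+m-e(N_v)$ summed over all vertices. Your proof is precisely the standard one from those references.
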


We illustrate that the Moon--Moser theorem 
implies a supersaturation on triangles for graph with more than 
$n^2/4$ edges. 
For example,  if $G$ has at least ${n^2}/{4} +1$ edges, 
then it contains at least ${n}/{3}$ triangles.  
This result is slightly weaker than 
the Erd\H{o}s--Rademacher theorem. 
Moreover, the Moon--Moser theorem yields that 
if $\varepsilon >0$ and $G$ has  at least 
$ {n^2}/{4} + \varepsilon n^2$ edges,  
then $G$ contains more than ${\varepsilon} n^3/3$  triangles.  
In what follows, we shall show a generalization 
for graphs with less than $n^2/4$ edges. 

\medskip 
We say that a graph $G$ is  {\it $t$-far from being bipartite} 
if  $G'$ is not bipartite 
for every subgraph $G'$ of $G$ with 
$e(G')>  e(G) -t$, where $t$ is a positive real number.  
In other words, if $G$ is $t$-far from being bipartite, 
then no matter how we delete less than $t$ edges from $G$, 
the resulting graph is not bipartite. 
Equivalently, we must remove at least $t$ edges from 
$G$ to make it bipartite.  
It is well-known that every graph $G$ contains a bipartite subgraph $H$ 
with $e(H)\ge  e(G)/2$.  
From this  observation, we know that if  $G$ is said to be $t$-far from being bipartite, 
then we always admit the natural condition $t\le e(G)/2$.  

\medskip

Next, we present a counting result, 
which comes from the work of  Balogh, Bushaw, Collares, Liu, Morris 
and  Sharifzadeh \cite{BBCLMS2017}  during the study on the typical structure of  graphs with no large cliques.  
This result allows us to avoid the use of the triangle removal lemma 
or the Erd\H{o}s--Stone--Simonovits theorem, 
so that we could obtain a better bound on the order of the extremal graphs.  
This will be explained in the forthcoming Subsection \ref{sec4-3}.

\begin{theorem}[See \cite{BBCLMS2017}] \label{thm-far-bipartite}  
Let $G$ be a graph on $n$ vertices with $m$ edges. 
 If $t> 0$ and $G$ is $t$-far from being bipartite, then 
\[ t(G) \ge \frac{n}{6}\left( m +t-\frac{n^2}{4}\right).  \] 
\end{theorem}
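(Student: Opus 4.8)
The plan is to fix a \emph{maximum} cut $(A,B)$ of $G$, with $|A|=a$, $|B|=b$ and $a+b=n$, and to reformulate the hypothesis in terms of this cut. Since the least number of edges whose removal makes $G$ bipartite equals $m$ minus the size of a maximum cut, and for the chosen partition this quantity is exactly $e(A)+e(B)$, the assumption that $G$ is $t$-far from being bipartite is equivalent to $t\le e(A)+e(B)$. As the asserted inequality is monotone in $t$, it suffices to prove
\[ t(G)\ \ge\ \frac{n}{6}\Bigl(e(A,B)+2e(A)+2e(B)-\tfrac{n^2}{4}\Bigr). \]
Throughout I would exploit the defining property of a maximum cut: moving a single vertex to the other side cannot increase the cut, so every vertex sends at least half of its edges across the cut, i.e. $d_B(w)\ge d_A(w)$ for $w\in A$ and $d_A(w)\ge d_B(w)$ for $w\in B$.

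Next I would classify the triangles of $G$ by how they meet the cut. A triangle has an odd number of edges lying inside the parts, hence exactly one or exactly three; writing $t_{\mathrm I}$ for the number of the former type and noting that the latter are precisely the triangles of $G[A]$ and $G[B]$, we obtain $t(G)=t_{\mathrm I}+t(G[A])+t(G[B])$, where a type-I triangle with its unique within-part edge $uv$ in $A$ has its third vertex in $B$. Counting each type-I triangle once, through that edge, yields
\[ t_{\mathrm I}\ =\ \sum_{uv\in E(G[A])}|N_B(u)\cap N_B(v)|\ +\ \sum_{uv\in E(G[B])}|N_A(u)\cap N_A(v)|. \]
Applying inclusion--exclusion on the side opposite to each within-part edge, $|N_B(u)\cap N_B(v)|\ge d_B(u)+d_B(v)-b$, and summing gives
\[ \sum_{uv\in E(G[A])}|N_B(u)\cap N_B(v)|\ \ge\ \sum_{w\in A} d_A(w)\,d_B(w)\ -\ b\,e(A), \]
and symmetrically for $B$. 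One then feeds in the identities $\sum_{w\in A} d_A(w)=2e(A)$ and $\sum_{w\in A} d_B(w)=e(A,B)$, the cut inequality $d_B(w)\ge d_A(w)$, the elementary bounds $e(A,B)\le ab\le n^2/4$, and $t(G[A]),t(G[B])\ge 0$, to reach the displayed target inequality.

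The main obstacle is the final estimation: bounding $\sum_{w\in A}d_A(w)\,d_B(w)$ by the product of averages (Cauchy--Schwarz / convexity) is too wasteful when the parts are sparse, so one must retain the weights $d_B(w)$ and use that the vertices carrying within-part edges also carry many cross-edges. The clean way to organize this is a dichotomy: if $e(A)+e(B)$ is small, then the right-hand side $\tfrac{n}{6}\bigl(e(A,B)+2e(A)+2e(B)-n^2/4\bigr)$ is itself nonpositive — it is $\le 0$ unless $2\bigl(e(A)+e(B)\bigr)>n^2/4-e(A,B)$ — so the inequality is vacuous; if $e(A)+e(B)$ is large, then $t_{\mathrm I}$ is correspondingly large and the bound has room to spare. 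Making the constants line up exactly along this dichotomy, so that both the leading coefficient $n/6$ and the $-n^2/4$ correction come out sharp with equality approached by configurations such as a balanced complete bipartite graph with a clique planted in one part, is the delicate point; the remainder is routine. (Alternatively, one can run an induction on $n$: delete a vertex $v$, observe that $G-v$ is $\bigl(t-\lceil d(v)/2\rceil\bigr)$-far from being bipartite, apply the inductive hypothesis, and add back the $e(N(v))$ triangles through $v$; but then the vertex must be chosen with some care for the constants to close.)
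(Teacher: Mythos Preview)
Your approach is genuinely different from the paper's, and as written it has a real gap at exactly the point you flag as ``delicate.''

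The paper does \emph{not} fix a single maximum cut. Instead it applies the $t$-far hypothesis to the bipartition $(N(v),\,V\setminus N(v))$ for \emph{every} vertex $v$, obtaining $e(N(v))+e(V\setminus N(v))\ge t$ for each $v$. A short manipulation gives $\sum_{w\notin N(v)} d(w)\ge m+t-2e(N(v))$, and summing this over all $v$ (using $\sum_v e(N(v))=3t(G)$ and $\sum_v\sum_{w\notin N(v)}d(w)=2mn-\sum_w d(w)^2$) yields
\[
6t(G)\ \ge\ nt-mn+\sum_w d(w)^2\ \ge\ nt-mn+\frac{4m^2}{n}\ \ge\ nt+mn-\frac{n^3}{4},
\]
the last step being $(2m-n^2/2)^2\ge 0$. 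The factor $n/6$ arises precisely because the hypothesis is invoked $n$ times and then averaged.

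Your route invokes the hypothesis only \emph{once}, at the max cut, and then relies on inclusion--exclusion plus the cut inequality $d_B(w)\ge d_A(w)$. This is too lossy for the sharp constant. Concretely, take $a=b=n/2$, let $G[A]$ be $(2n/7)$-regular (so $e(A)=n^2/14$), $G[B]$ empty, and give every vertex of $A$ exactly $2n/7$ neighbours in $B$ (so $e(A,B)=n^2/7$; the cut is maximum since $d_B(w)=d_A(w)$ on $A$). Your target inequality asks for $t(G)\ge n^3/168$, but your bound gives only
\[
t_{\mathrm I}\ \ge\ \sum_{w\in A} d_A(w)d_B(w)-b\,e(A)\ =\ \frac{n}{2}\cdot\Bigl(\frac{2n}{7}\Bigr)^2-\frac{n}{2}\cdot\frac{n^2}{14}\ =\ \frac{n^3}{196},
\]
which falls short --- and you have explicitly discarded $t(G[A])$. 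The dichotomy you sketch does not help here: $e(A)+e(B)=n^2/14$ is large enough that the right-hand side is positive, yet the inclusion--exclusion estimate on $t_{\mathrm I}$ alone is insufficient. One can rescue this particular example by adding a Moon--Moser lower bound on $t(G[A])$, but that is an extra ingredient you did not supply, and carrying it through for arbitrary $a,b,e(A),e(B),e(A,B)$ with the exact constant $n/6$ is not the ``routine'' matter you suggest.

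The induction alternative has the same defect: ``choosing $v$ with some care'' is where all the work lies, and the natural averaging over $v$ is circular because $\sum_v e(N(v))=3t(G)$ reintroduces the very quantity you are bounding. (Incidentally, the correct statement is that $G-v$ is $(t-\lfloor d(v)/2\rfloor)$-far from bipartite, not $\lceil\cdot\rceil$.)
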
  

We provide a detailed proof for completeness. 
This result can be proved by applying a similar argument 
due to  Sudakov \cite{Sud2007}, F\"{u}redi \cite{Fur2015} and Conlon, Fox and Sudakov \cite{CFS2020-Siam}.

\begin{proof} 
 For each $v \in V(G)$, we denote $N_v=N(v)$ 
 and $N_v^c=V(G)\setminus N(v)$.  
Since $G$ is $t$-far from being bipartite, it follows  that 
for every $v\in V(G)$, 
\[  e(N_v) + e(N_v^c) \ge t . \] 
On the one hand, we have 
\[  \sum_{w\in N_v^c} d(w) = 2e(N_v^c) + e(N_v^c,N_v) 
= e(N_v^c) + m - e(N_v) 
\ge m+t-2e(N_v).  
  \]
Summing over all vertices $v\in V(G)$ yields 
  \[  \sum_{v\in V(G)} \sum_{w\in N_v^c} d(w) 
  \ge mn +nt - 2 \sum_{v\in V(G)} e(N_v) = 
  mn +nt - 6t(G), \]
  where we used the fact $\sum_{v\in V(G)}e(N_v) = 3t(G)$. 
 On the other hand, we get  
\[ \sum_{v\in V(G)} \sum_{w\in N_v^c} d(w)= 
\sum_{v\in V(G)} \left( 2m - \sum_{w\in N_v}d(w) \right) 
= 2mn - \sum_{w\in V(G)} d^2(w). \]
Combining these two inequalities, 
we obtain 
\[  6t(G) 
\ge nt- mn + \sum_{w\in V(G)} d^2(w) 
\ge nt-mn + \frac{4m^2}{n}. \]
Observe that  
${4m^2}/{n} \ge 2mn - {n^3}/{4}$. 
The required bound holds immediately. 
\end{proof}

The following result is a direct consequence of Theorem \ref{thm-far-bipartite}. 

\begin{corollary}  \label{coro-bip}
If $G$ is an $n$-vertex graph with $m = {n^2}/{4} -q$ edges, where $q\in \mathbb{Z}$, 
and $G$ has at most $t$ triangles, then we can remove at most  
${6t}/{n} +q$ edges to make it bipartite, so $G$ has a bipartite 
subgraph with size at least ${n^2}/{4}  - {6t}/{n} -2q$. 
\end{corollary}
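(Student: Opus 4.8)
The plan is to read the corollary straight off Theorem~\ref{thm-far-bipartite} by applying it with the best possible value of the ``farness'' parameter. First I would set $s$ to be the minimum number of edges whose removal from $G$ makes it bipartite; as observed just after the definition of being $t$-far from bipartite, this $s$ is exactly the largest integer for which $G$ is $s$-far from being bipartite. In particular $G$ is $s$-far from being bipartite, so Theorem~\ref{thm-far-bipartite} applied with parameter $s$ gives
\[ t(G) \ge \frac{n}{6}\left( m + s - \frac{n^2}{4}\right). \]

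Next I would substitute the hypothesis $m = n^2/4 - q$, which collapses the right-hand side to $\frac{n}{6}(s-q)$, and then feed in the assumption $t(G) \le t$ to obtain $\frac{n}{6}(s-q) \le t$, that is, $s \le 6t/n + q$. This is precisely the claim that $G$ can be made bipartite by deleting at most $6t/n + q$ edges. Removing an optimal set of $s$ such edges leaves a bipartite subgraph with
\[ m - s \ \ge\ \left(\frac{n^2}{4} - q\right) - \left(\frac{6t}{n} + q\right) \ =\ \frac{n^2}{4} - \frac{6t}{n} - 2q \]
edges, which is the second assertion.

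Beyond this bookkeeping there is essentially no obstacle; the only point needing a moment's care is the notational overlap, since the letter $t$ denotes both the farness parameter in Theorem~\ref{thm-far-bipartite} and the triangle upper bound in the corollary, so I would keep the edit-distance quantity named $s$ throughout and only reconcile it with $t$ at the very end. It is also worth a quick check of the degenerate case where $G$ is already bipartite: then $s = 0$, and since $m \le n^2/4$ forces $q \ge 0$ we get $6t/n + q \ge 0 = s$, so the bound holds trivially and no separate argument is required.
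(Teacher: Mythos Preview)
Your argument is correct and is precisely the intended ``direct consequence'' derivation: the paper does not spell out a proof of this corollary, merely noting that it follows immediately from Theorem~\ref{thm-far-bipartite}, and your contrapositive application with the optimal farness parameter $s$ is exactly how one reads it off. The separate treatment of the bipartite case $s=0$ is a nice sanity check but, as you essentially note, it is already subsumed by the main inequality.
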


This corollary can also be deduced from 
the result of Sudakov \cite[Lemma 2.3]{Sud2007}.

\subsection{Applications of the supersaturation-stability}

\label{sec4-3}

There are several advantages in the supersaturation-stability method. 
As promised, we now present four quick applications of this method. 
In our framework, 
we will take advantage of the results in Theorem \ref{thm-far-bipartite} or Corollary \ref{coro-bip} with some appropriate structural analysis.

\subsubsection{The Erd\H{o}s conjecture involving the booksize}

Recall that a book of size $t$ consists of $t$ triangles 
that share a common edge. 
The study of bounding the largest size of a book in a graph was initially investigated by 
Erd\H{o}s \cite{Erd1962a} who proved that every $n$-vertex graph  with at least 
$\lfloor n^2/4\rfloor +1$ edges contains a book of size 
$n/6 - O(1)$,  and conjectured that the term $O(1)$ can be removed. This conjecture was later 
proved by Edwards (unpublished, see \cite[Lemma 4]{EFR1992}) 
and independently by Khad\v{z}iivanov and Nikiforov \cite{KN1979} (unavailable, see \cite{BN2005}). 
Unfortunately, neither of the two original references can be found.  
Here, we show that  Theorem \ref{thm-far-bipartite} 
can easily confirm the Erd\H{o}s conjecture.  
More precisely, we can use Theorem \ref{thm-far-bipartite}  
to prove that every graph $G$ 
on $n$ vertices with more than $n^2/4$ edges contains a book of size greater than $n/6$. 
Indeed, assume that $G$ has exactly $t$ triangles, then 
Theorem \ref{thm-far-bipartite}  yields that 
$G$ is not ${6t}/{n}$-far from being bipartite. 
Specifically, one can remove less than ${6t}/{n}$ edges from $G$ to destroy all $t$ triangles. So one of these edges must be contained in more than $n/6$ triangles, as needed. 
For more related results, we refer the readers to 
\cite{BN2005,Niki2021,ZL2022jgt} and the references therein.

\subsubsection{Eliminating the use of triangle removal lemma}

In 2020, Cioab\u{a},  Feng, Tait and  Zhang \cite{CFTZ20} studied the spectral extremal graphs 
of order $n$ for the friendship graph $F_k$ and 
 sufficiently large $n$, where $F_k$ is the graph that consists of 
 $k$ triangles sharing a vertex. 
Their proof uses the Ruzsa--Szemer\'{e}di triangle removal lemma, 
which settles the problem in the case where $k$ is fixed,  and
the result is meaningless when $k$ is large and growth with $n$ 
(say, when $k\ge  \log n$).   
Using the supersaturation-stability method, 
instead of the triangle removal lemma, 
we can show that the main result in \cite{CFTZ20} is valid for every $k\le \frac{1}{21}n^{1/4}$. 
This considerably extends the range of $k$. 
The main ingredient is to prove the following lemma, 
which can substantially simplify the original proof. 

\begin{lemma} \label{lem-Fk}
If $G$ is an $F_k$-free graph on $n$ vertices
 and $\lambda (G)\ge {n}/{2}$, 
then 
\[  e(G)> \frac{n^2}{4} - 54k^2 \] 
 and there exists a vertex partition of $G$ as  
$V(G)=S\cup T$ such that 
\[  e(S) + e(T)< 108k^2. \]  
Moreover, we have 
\[  \frac{n}{2} -13k < |S|, |T| < \frac{n}{2} +13k \] 
 and 
\[  \frac{n}{2} - 56 k^2 < \delta (G) 
\le \lambda (G) \le \Delta (G) < \frac{n}{2} + 14k. \] 
\end{lemma}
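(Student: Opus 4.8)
The plan is to combine the local structure of $F_k$-free graphs with the spectral counting inequality of Lemma~\ref{thm-BN-CFTZ-NZ} and the supersaturation-stability of Theorem~\ref{thm-far-bipartite}, and then to bootstrap the rough structure so obtained into the precise estimates of the lemma. The local input I would use is that for every vertex $v$ the graph $G[N(v)]$ contains no matching of size $k$, since such a matching would supply the $k$ pages of an $F_k$ with spine-vertex $v$. By the Erd\H{o}s--Gallai theorem this gives $e(G[N(v)])\le (k-1)d(v)+\binom{2k-1}{2}$, and moreover $N(v)$ admits a vertex cover of size at most $2(k-1)$ (take both endpoints of a maximum matching), i.e.\ all but at most $2(k-1)$ vertices of $N(v)$ are pairwise non-adjacent. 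Summing the first estimate over $v$ and using $\sum_v e(G[N(v)])=3t(G)$ yields
\[
  t(G)\ \le\ \tfrac23(k-1)m+\tfrac n3\binom{2k-1}{2}.
\]
I would then compare this with Lemma~\ref{thm-BN-CFTZ-NZ}: if $m\ge\lambda^2$ then $m\ge (n/2)^2$ and the first conclusion is immediate; otherwise $t(G)\ge\frac13\lambda(\lambda^2-m)\ge\frac n6\bigl(\frac{n^2}4-m\bigr)$, using $\lambda\ge n/2$, $m<n^2/4$, and the monotonicity of $x\mapsto x(x^2-m)$ on $[n/2,\infty)$. Combining with the triangle upper bound above, a routine computation using $n\ge 5432$ gives the weak estimate $m>\frac{n^2}4-O(kn)$.

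Next I would feed $t(G)=O(km)$ into Theorem~\ref{thm-far-bipartite}: it shows that $G$ cannot be $t_*$-far from being bipartite for $t_*$ of order $kn$ (note $t_*\le m/2$ automatically). Taking a bipartition $V(G)=S\cup T$ realising a maximum cut, we obtain $e(S)+e(T)=O(kn)$ and, by maximality, $d_S(v)\le d_T(v)$ for $v\in S$ and $d_T(v)\le d_S(v)$ for $v\in T$. Together with the weak bound on $m$ this already forces $|S|,|T|=\frac n2+O(\sqrt{kn})$ and the defect $c:=|S||T|-e(S,T)$ to satisfy $c=O(kn)$.

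The technical heart, and what I expect to be the main obstacle, is upgrading all of these $O(kn)$-scale estimates to the $O(k^2)$-scale claimed, while controlling the explicit constants. The intended mechanism is: the vertex of $S$ with fewest non-neighbours in $T$ misses only $O(k)$ of them, so combining the ``$N(v)$ is independent up to $2(k-1)$ vertices'' structure with this near-completeness forces $\nu(G[T])=O(k)$, and symmetrically $\nu(G[S])=O(k)$; next, any $v\in S$ with $d_S(v)\ge k$ whose $T$-neighbourhood is almost all of $T$ lets one greedily build a matching of size $k$ inside $N(v)$ unless some $a\in N_S(v)$ has very small cross-degree, and the latter would contradict a lower bound on $\delta(G)$ obtained from a spectral-perturbation argument; hence every internal degree is $O(k)$. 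Since $G[S]$ and $G[T]$ then have bounded matching number \emph{and} bounded maximum degree, all of their edges lie on a set of $O(k)$ vertices of degree $O(k)$, so $e(S)+e(T)=O(k^2)$. Substituting this back, $m=|S||T|-c+(e(S)+e(T))$ gives $m\le\frac{n^2}4+O(k^2)$; and the Perron-eigenvector identity $\lambda(G)=\mathbf{x}^{T}A_G\mathbf{x}$ for the unit Perron vector $\mathbf{x}$, compared with the complete bipartite graph $K_{|S|,|T|}$ and using $\lambda(G)\ge n/2$, forces $c=O(k^2)$, whence $m>\frac{n^2}4-O(k^2)$ and $|S|,|T|=\frac n2+O(k)$. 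Finally, a vertex of degree exceeding $\frac n2+O(k)$ would contain an $F_k$ in its neighbourhood against this near-complete bipartite backbone, while a vertex of degree below $\frac n2-O(k^2)$ would miss more edges across the cut than $c$ permits; carrying the absolute constants through all of these steps — which, as the paper notes, are deliberately not optimised — yields exactly $e(G)>\frac{n^2}4-54k^2$, $e(S)+e(T)<108k^2$, $\frac n2-13k<|S|,|T|<\frac n2+13k$, and $\frac n2-56k^2<\delta(G)\le\lambda(G)\le\Delta(G)<\frac n2+14k$.
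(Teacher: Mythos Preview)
Your route diverges from the paper's in a crucial way. The paper does \emph{not} bootstrap: it invokes the Alon--Shikhelman bound $t(G)<9k^2n$ for $F_k$-free graphs as a black box, which, plugged into Lemma~\ref{thm-BN-CFTZ-NZ}, immediately yields $e(G)>n^2/4-54k^2$ (note $54=6\cdot 9$). Theorem~\ref{thm-far-bipartite} then gives $e(S)+e(T)<108k^2$ at once, and the remaining estimates follow: the $|S|,|T|$ bounds from $e(S,T)>n^2/4-162k^2$; the bound $\delta(G)>n/2-56k^2$ by deleting a low-degree vertex and invoking the extremal number $\mathrm{ex}(n-1,F_k)\le\lfloor (n-1)^2/4\rfloor+k^2$; and finally, using $\delta(G)$ large, the observation that $G[S]$ and $G[T]$ are $K_{1,k}$-free, whence $\Delta(G)<|T|+k<n/2+14k$. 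All the displayed constants are direct arithmetic consequences of the single constant $9$ in Alon--Shikhelman.

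Your Erd\H{o}s--Gallai bound on $e(G[N(v)])$ gives only $t(G)=O(km)=O(kn^2)$, hence $e(G)>n^2/4-O(kn)$ and $e(S)+e(T)=O(kn)$, and you then try to bootstrap down to $O(k^2)$. There is a genuine gap here. Your proposed mechanism for bounding internal degrees requires a lower bound on $\delta(G)$, which you say comes from ``a spectral-perturbation argument''. But spectral perturbation (modify $G$ and compare Rayleigh quotients) only yields a contradiction when $G$ is assumed to be \emph{spectrally extremal} in some class --- as in Lemma~\ref{Lempty} --- whereas the present lemma is stated for an \emph{arbitrary} $F_k$-free graph with $\lambda(G)\ge n/2$. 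Without that extremality, nothing forbids $G$ from having isolated or low-degree vertices. The paper's own $\delta$-argument instead uses $e(G)>n^2/4-54k^2$ together with $\mathrm{ex}(n-1,F_k)$, but that edge bound is exactly what you have not yet established at the $O(k^2)$ scale; so your chain is circular (need $\delta$ to bound internal degrees, need internal degrees to get $e(S)+e(T)=O(k^2)$, need that to get $t=O(k^2n)$, need that to get $e(G)>n^2/4-O(k^2)$, need that for $\delta$). The clean way out --- and the way the paper takes --- is simply to cite Alon--Shikhelman up front; your approach avoids it only to rediscover the need for it.
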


\begin{proof}
A result due to Alon and Shikhelman \cite[Lemma 3.1]{AS2016} states that 
if $G$ is $F_k$-free, then $G$ has less than 
$(9k-15)(k+1)n < 9k^2n$ triangles. 
Using Lemma \ref{thm-BN-CFTZ-NZ}, 
we have $e(G) \ge \lambda^2- {(3t)}/{\lambda} \ge  
\lambda^2 - {(6t)}/{n} > n^2/4 - 54k^2$.  
Then it follows from Theorem \ref{thm-far-bipartite}  
that $G$ is not $108k^2$-far from being bipartite.  
Thus, we can remove less than $108k^2$ edges from $G$ to obtain 
a bipartite subgraph. Equivalently, there exists a vertex partition $V(G)=S\cup T$ such that 
$e(S) + e(T)< 108k^2$. Therefore, we get $e(S,T)\ge e(G)- 108k^2 
> n^2/4 - 162k^2$, which implies 
$n/2 - 13k < |S|,|T| < n/2 + 13k$. 
Furthermore, we have  $\delta (G)>  n/2 - 56k^2$.  
Otherwise, if $d(v)\le n/2 - 56k^2$ for some $v\in V(G)$, 
then $e(G\setminus \{v\})\ge n^2/4 - 54k^2 - (n/2 - 56k^2) 
> (n-1)^2/4 + k^2$, which leads to a copy of $F_k$ in $G\setminus \{v\}$, 
a contradiction. 
Since $\delta(G) > n/2 - 56k^2$, using the inclusion-exclusion principle, 
we can show that both $G[S]$ and $G[T]$ 
are $K_{1,k}$-free and $M_k$-free. Then 
 $\Delta (G)< (n/2 + 13k) + k \le n/2 + 14k$. 
\end{proof}

The key innovation in our argument 
is to exploit the supersaturation-stability. 
 Lemma \ref{lem-Fk} can have the same role as that from \cite[Lemma 15]{CFTZ20}. 
Consequently, we provide a new approach to simplifying  
 many  technical lemmas as stated in \cite{CFTZ20} 
 so that we can get rid of the use of triangle removal lemma and drop the condition requiring  $n$ to be sufficiently large.  
Note that Li, Lu and Peng  \cite{2022LLP} 
revisited the spectral extremal graph for the bowtie $F_2$ 
and showed a tight bound $n\ge 7$ in another different way. 
In addition, Lemma \ref{lem-Fk} can also be applied 
to the proof of a recent result due to Lin, Zhai and Zhao \cite[Theorem 7]{LZZ2022}.

\subsubsection{Concise stability result for odd cycles}

The classical stability of Erd\H{o}s and Simonovits says that 
for any $\varepsilon >0$ and any graph $F$ with $\chi (F)=r+1$, there exist $n_0$ and $\delta >0$ such that 
if $G$ is an $F$-free graph on $n\ge n_0$ vertices with 
$e(G)\ge (1- \frac{1}{r} - \delta )\frac{n^2}{2}$, then $G$ can be made $r$-partite by removing at most $\varepsilon n^2$ edges. Moreover,  F\"{u}redi \cite{Fur2015} proved that 
if $G$ is an $n$-vertex $K_{r+1}$-free graph with 
$e(G)\ge e(T_{n,r}) - t$ edges, then $G$ can be made $r$-partite by removing at most $t$ edges. 
This gives a concise dependency $\delta =2\varepsilon$. 
 The concise stability for cliques are well-studied in the past few years;  see \cite{RS2018,Liu2021,BCLLP2021,KRS2021} and references therein. 

We point out that 
the supersaturation-stability method may be utilized to get better bounds 
for treating the extremal problems 
on $C_{2k+1}$-free graphs or $kC_3$-free graphs. 
By applying Corollary \ref{coro-bip}, 
we can prove the following concise stability for odd cycles.

\begin{theorem}[Concise stability]
For every $k\ge 1$ and $0<\varepsilon < 1/2$, 
we denote $\delta := \varepsilon /2$ and 
$n_0:= 2k / \varepsilon$. 
If $G$ is a $C_{2k+1}$-free graph on $n\ge n_0$ vertices 
with $e(G)\ge ({1}/{4} - \delta )n^2$, 
then $G$ can be made bipartite by deleting at most $\varepsilon n^2$ edges. 
\end{theorem}

\begin{proof}
Since $G$ is $C_{2k+1}$-free, we know that $e(G)\le n^2/4$ 
for $n\ge 4k$. Note that 
$G[N(v)]$ is $P_{2k}$-free for each $v\in V(G)$.
Then $3t(G) = \sum_{v\in V} e(N(v))  \le \sum_{v\in V} kd(v) \le 2km \le \frac{1}{2}kn^2$, 
where the first inequality holds by the Erd\H{o}s--Gallai theorem. 
By Corollary \ref{coro-bip}, we can remove 
at most $6t/n +q \le kn + \delta n^2\le \varepsilon n^2$ 
edges to make $G$ bipartite. 
\end{proof}

The above proof gives a new short proof of the stability for odd cycles, but also presents  a {\it linear dependency} between $\delta$ and $\varepsilon$.  
However, the conventional proof for stability 
is based on applying the Erd\H{o}s--Stone--Simonovits theorem, which gives 
bad bounds on $\delta$ and $n_0$\footnote{We refer to Conlon's lecture note; see \url{http://www.its.caltech.edu/~dconlon/EGT12.pdf}}. 
Similarly, we can show the following concise stability for the 
spectral radius. 

\begin{theorem}
For every $k\ge 1$ and $\delta \ge 0$, if $G$ is a 
 $C_{2k+1}$-free graph on $n$ vertices with spectral radius 
 $\lambda (G)\ge {n}/{2} - \delta $, then 
$e(G)\ge n^2/4 - (\delta+2k)n$ and $G$ can be made bipartite by removing at most $(\delta +3k)n$ edges. 
\end{theorem}

\begin{proof}
Note that $3t(G)\le \frac{1}{2}kn^2$. 
Lemma \ref{thm-BN-CFTZ-NZ} implies 
$e(G)\ge \lambda^2- (3t)/ \lambda \ge n^2/4 - \delta n - 2kn$. Applying Corollary \ref{coro-bip}, we can remove at most $ \delta n + 3kn$ edges to make $G$ bipartite. 
\end{proof}

\subsubsection{An alternative proof of the Ning--Zhai theorem}

 Finally, 
we shall present the fourth application by giving  
an alternative new proof of Theorem \ref{thmNZ2021}. 
Our approach is completely different from the original proof in \cite{NZ2021}, 
and it is primarily based on the supersaturation-stability, 
while the original proof relies on the structural analysis of the extremal graph  by counting the $2$-walks starting from the largest entry of the Perron vector.  
Furthermore, our proof allows us to show that 
 the extremal graphs in Theorem \ref{thmNZ2021} are the same as those 
 in Theorem \ref{thm-main}; see Figure \ref{fig-S2T1}. 
In other words, we can determine 
all the extremal graphs $G$ satisfying
 $\lambda (G) > \lambda (T_{n,2})$ and 
 $t(G)= \left\lfloor {n}/{2}\right\rfloor -1$.  
To more clearly demonstrate the main ideas of our approach, we assume that $n\ge 36$ in order to avoid the tedious computation. Now, we briefly describe the main steps.

\begin{proof}[{\bf New proof of Theorem \ref{thmNZ2021}}]
Assume that $G$ is an $n$-vertex graph with $\lambda (G) \ge  
\lambda (T_{n,2})$ and $G\neq T_{n,2}$. 
Moreover, we assume further that $G$  has the minimum number of 
triangles. 
Then 
$  t(G)\le \left\lfloor \frac{n}{2}\right\rfloor -1\le \frac{n-2}{2}$.  
Note that  $\lambda (G)\ge \lambda (T_{n,2}) > \frac{n-1}{2}$. 
By Lemma \ref{thm-BN-CFTZ-NZ}, we get  
\[   e(G)\ge \lambda^2- \frac{3t}{\lambda} 
> \lambda^2 - \frac{6t}{n-1} \ge   
\left\lfloor \frac{n^2}{4} \right\rfloor - \frac{3(n-2)}{n-1} . \]  
Note that $e(G)$ must be an integer.  
Then 
\[ e(G)\ge \left\lfloor \frac{n^2}{4} \right\rfloor  -2. \] 
 If $G$ is  $6$-far from being bipartite, then Theorem \ref{thm-far-bipartite} implies that 
\[  t(G)\ge \frac{n}{6} \left( e(G) +6 - \frac{n^2}{4} \right) > \frac{n}{2}, \]  
 a contradiction. Thus, $G$ is not $6$-far from  being bipartite. Consequently, there is a partition of
the vertex set of $G$ as $V(G)=S\cup T$ such that $e(S)+e(T)< 6$. Then 
\[  e(S,T) = e(G) - e(S) - e(T)
\ge e(G) -5 \ge \left\lfloor \frac{n^2}{4} \right\rfloor -7. \]    
By the AM-GM inequality, we get 
\[  \left \lfloor \frac{n}{2} \right\rfloor -2 
\le |S|,|T| \le \left \lceil \frac{n}{2} \right\rceil +2. \] 
We say an edge is a class-edge of $G$ if the endpoints 
 of this edge are either both in $S$ or both in $T$. 
Similarly,  an edge is said to be a cross-edge 
 if it has one endpoint in $S$ and the other in $T$.  
Next, we claim that there is exactly one class-edge in $G$. Namely, 
\[ e(S) +e(T) =1. \]  
Otherwise, suppose  that $G$ has $s$ class-edges, where $2\le s\le 5$. 
Observe that each missing cross-edge between $S$ and $T$ 
is contained in at most $s$ triangles. Then for $n\ge 36$, 
we have $ t(G) \ge s ( \lfloor \frac{n}{2}\rfloor -2) - 7s > \lfloor \frac{n}{2}\rfloor -1$, 
a contradiction.  Thus, we conclude that 
$ e(S) + e(T)=1$.  
Using this claim, 
we can make a slight refinement as below:  
\[  e(S,T)= e(G)- 1 \ge \left\lfloor \frac{n^2}{4} \right\rfloor -3 \] 
and 
\[  \left \lfloor \frac{n}{2} \right\rfloor -1 
\le |S|,|T| \le \left \lceil \frac{n}{2} \right\rceil +1. \] 
Without loss of generality, we may assume that 
 $e(S)=1$ and $e(T)=0$. 
 Thus, $G$ is a subgraph of $K_{s,t}^+$ with  $s\in [\frac{n}{2} -1, \frac{n}{2} +1]$, and $G$ satisfies $\lambda (G)\ge \lambda (T_{n,2})$
and $t(G)\le \lfloor \frac{n}{2}\rfloor -1$. 
 Finally, using a simple argument, 
 we can compute that 
 \[ G\in \left\{ K_{\frac{n}{2}+1,\frac{n}{2}-1}^+,
K_{\frac{n}{2} ,  \frac{n}{2} }^{+|}, 
K_{ \frac{n+1}{2},\frac{n-1}{2}}^{+|} \right\}. \] 
For simplicity, we omit the tedious calculation, since a similar argument 
can be found in the remark after the proof of Theorem \ref{thm-main} in Section \ref{sec5}.
\end{proof}

\noindent 
{\bf Remark.} 
A theorem of Erd\H{o}s and Rademacher \cite{Erd1955,Erdos1964} states that 
if $e(G)> e(T_{n,2})$, then  $t(G)\ge \lfloor {n}/{2}\rfloor$.  
At first glance, the Erd\H{o}s--Rademacher theorem and Theorem \ref{thmNZ2021} seem incomparable. 
In the above proof, after determining the extremal graphs in 
 Theorem \ref{thmNZ2021}, we can show that 
 Theorem \ref{thmNZ2021} actually implies the Erd\H{o}s--Rademacher theorem. 
 Indeed, as long as $G$ is a graph with $e(G)> e(T_{n,2})$, 
 by the fact  $\lambda (G)\ge 2e(G)/n$,  we can get $\lambda (G)> \lambda (T_{n,2})$. 
Then Theorem \ref{thmNZ2021} gives $t(G)\ge \lfloor {n}/{2}\rfloor -1$, 
 while the graphs attaining the equality has exactly $\lfloor {n^2}/{4}\rfloor$ edges. Therefore, we have $t(G)\ge \lfloor {n}/{2}\rfloor $, as expected. 
It turns out to be meaningful to characterize  
the equality case of Theorem \ref{thmNZ2021}
in this sense.

\section{Proof of Theorem \ref{thm-main}}

\label{sec5}

Assume that $G$ is a graph of order $n$ with 
$\lambda (G) \ge \lambda (T_{n,2})$ and $G\neq T_{n,2}$, 
we need to prove that $G$ has 
at least  $2\lfloor n/2\rfloor -1$ triangular edges. 
Suppose on the contrary that 
$G$ has less than $2\lfloor n/2\rfloor -1$ triangular edges 
(This bound can be changed to $2\lfloor n/2\rfloor +1$ 
in order to adapt the proof of Theorem \ref{thm-main3}). 
Among such counterexamples, we choose $G$ as a graph 
with the maximum spectral radius.

\begin{lemma}\label{lem-partition}
There exists a vertex partition  $V(G)=S\cup T$ such that 
\[ e(S) + e(T)< 6\sqrt{n} \]
and 
$$e(S, T)> \frac{n^2}{4} - 9\sqrt{n}. $$
Furthermore, we have 
\[
\frac{n}{2} - 3n^{1/4} < |S|, |T| < 
\frac{n}{2} + 3n^{1/4}.
\]
\end{lemma}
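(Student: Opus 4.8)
The plan is to obtain the partition by playing the spectral lower bound on $e(G)$ (Lemma~\ref{thm-BN-CFTZ-NZ}) against the supersaturation-stability estimate (Theorem~\ref{thm-far-bipartite}), with a Kruskal--Katona bound keeping the triangle count under control. First I would bound the number of triangles from above. Since $G$ has fewer than $2\lfloor n/2\rfloor-1\le n$ triangular edges, let $G'$ be the spanning subgraph of $G$ whose edges are exactly the triangular edges of $G$; then $t(G)=t(G')$ and $e(G')<n$. By Kruskal--Katona (in the form already used for Theorems~\ref{super-1} and~\ref{thm-m}), $t(G)=t(G')\le\binom{\sqrt{2e(G')}}{3}<\binom{\sqrt{2n}}{3}$, and for $n\ge 5432$ this is less than $0.48\,n^{3/2}$.

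Next I would lower-bound $e(G)$. Since $\lambda(G)\ge\lambda(T_{n,2})$, with $\lambda^2(T_{n,2})=\lfloor n^2/4\rfloor$ and $\lambda(T_{n,2})>(n-1)/2$, Lemma~\ref{thm-BN-CFTZ-NZ} gives $e(G)\ge\lambda^2(G)-3t(G)/\lambda(G)>\lfloor n^2/4\rfloor-6t(G)/(n-1)$; inserting the bound from the previous step and using that $e(G)$ is an integer yields $e(G)>n^2/4-3\sqrt n$ for $n\ge 5432$. Now I claim $G$ is not $6\sqrt n$-far from being bipartite: if it were, Theorem~\ref{thm-far-bipartite} would force $t(G)\ge\frac n6\bigl(e(G)+6\sqrt n-n^2/4\bigr)>\frac n6\cdot 3\sqrt n=\tfrac12 n^{3/2}$, contradicting the Kruskal--Katona bound $t(G)<0.48\,n^{3/2}$.

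Consequently $G$ has a bipartite subgraph missing fewer than $6\sqrt n$ edges; taking its two colour classes as $S$ and $T$ (and assigning any vertex incident to none of the retained edges to either side) gives a partition $V(G)=S\cup T$ with $e(S)+e(T)<6\sqrt n$, and hence $e(S,T)=e(G)-e(S)-e(T)>n^2/4-9\sqrt n$. Finally, from $e(S,T)\le|S|\,|T|$ and $|S|+|T|=n$, writing $|S|=\tfrac n2+a$ we get $\tfrac{n^2}{4}-a^2\ge|S|\,|T|>\tfrac{n^2}{4}-9\sqrt n$, so $a^2<9\sqrt n$, i.e.\ $|a|<3n^{1/4}$, which is precisely the asserted bound on $|S|$ and $|T|$.

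The only delicate point is quantitative: the Kruskal--Katona upper bound $\tfrac{\sqrt2}{3}n^{3/2}\approx 0.471\,n^{3/2}$ and the stability lower bound $\tfrac12 n^{3/2}$ are separated by a genuine but modest gap, so one must verify that the lower-order corrections---the floor in $\lfloor n^2/4\rfloor$, the passage from $6t(G)/(n-1)$ to $3\sqrt n$, and the discrepancy between $\binom{\sqrt{2n}}{3}$ and $(2n)^{3/2}/6$---do not erode it. The hypothesis $n\ge 5432$ is comfortably enough, and choosing $6\sqrt n$ (rather than a smaller multiple of $\sqrt n$) as the ``far from bipartite'' threshold is exactly what leaves the needed room in the contradiction.
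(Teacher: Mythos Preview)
Your proposal is correct and follows essentially the same approach as the paper: bound $t(G)$ via Kruskal--Katona on the triangular-edge subgraph, lower-bound $e(G)$ via Lemma~\ref{thm-BN-CFTZ-NZ}, contradict $6\sqrt{n}$-farness from bipartite using Theorem~\ref{thm-far-bipartite}, and finish with the AM--GM argument on $|S||T|$. Your explicit discussion of the narrow gap between $\tfrac{\sqrt 2}{3}n^{3/2}$ and $\tfrac12 n^{3/2}$ and the role of the lower-order terms is a welcome addition that the paper glosses over.
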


\begin{proof}
Since $G$ has less than 
$n$ triangular edges, 
we know from the Kruskal--Katona theorem (see, e.g., \cite[page 305]{Bollobas78}) 
that $G$ has less than $\sqrt{2}n^{3/2}/3 < n^{3/2}/2$ triangles. 
Note that $\lambda (G)\ge \lambda (T_{n,2})= \sqrt{\lfloor n^2/4\rfloor} > {(n-1)}/{2}$.  Then Lemma \ref{thm-BN-CFTZ-NZ} implies 
\[  e(G) \ge \lambda^2 - \frac{6t}{n-1}> \frac{n^2}{4} - 3\sqrt{n}.  \]
We claim that $G$ is not $6\sqrt{n}$-far from being bipartite. 
Suppose in contrast that $G$ is $6\sqrt{n}$-far from being bipartite. Then Theorem \ref{thm-far-bipartite} 
implies that $G$ has at least ${n}/{6} 
({n^2}/{4} - 3\sqrt{n} + 6\sqrt{n} - {n^2}/{4}) 
= n^{3/2}/2$ triangles, a contradiction. Therefore, $G$ is not $6\sqrt{n}$-far from 
being bipartite. Namely, there exists a vertex partition of 
$G$ as $V(G)=S\cup T$ such that 
\begin{equation*}  
e(S) + e(T) < 6\sqrt{n}. 
\end{equation*}
 Consequently, we get 
\[ e(S,T)> e(G) - 6\sqrt{n} > \frac{n^2}{4} - 9\sqrt{n}. \] 
Without loss of generality, we may assume that 
$1\le |S| \le |T|$. Suppose on the contrary that 
 $|S| \le {n}/{2} - 3n^{1/4}$. Then by $|S| + |T|=n$, we have 
$|T| \ge {n}/{2}  + 3n^{1/4}$. It follows that 
$e(S,T)\le |S| |T| \le ({n}/{2} - 3n^{1/4})({n}/{2} + 3n^{1/4})= {n^2}/{4} - 9 n^{1/2}$, a contradiction. 
Thus, we obtain $|S| > {n}/{2} - 3n^{1/4}$ and 
 $|T| = n-|S| < {n}/{2} + 3n^{1/4}$, as required. 
        \end{proof}

Lemma \ref{lem-partition} guarantees that there exists a partition with  
        $e(S,T) > n^2/4 - 9\sqrt{n}$ 
        and $e(S) + e(T) < 6\sqrt{n}$. 
          Among such partitions, we may assume further that 
        $V(G)=S\cup T$ is a partition with  maximum cut, i.e., 
        the bipartite subgraph $G[S,T]$ has the maximum number of edges. 
        Next, we define two sets of ‘bad’ vertices of $G$. Namely, we define  
\[ L:= \left\{v\in V(G): d(v) \leq 
\left(\frac{1}{2}-\frac{1}{200} \right) n\right\} .\]  
        For a vertex $v\in V(G)$, let $d_S(v) = |N(v) \cap S|$ and $d_T(v) = |N(v) \cap T|$. We denote 
\[ W: = \left\{ v\in S: d_S(v) \geq  \frac{n}{140} \right\} \cup 
\left\{v \in T: d_T(v) \geq  \frac{n}{140} \right\}. \]
First of all,  we show that both $W$ and $L$ are small sets.

\begin{lemma}\label{Lupper}  
We have 
$|L| < 10.$ 
\end{lemma}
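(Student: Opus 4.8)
The plan is to exploit the two facts established so far: the graph $G$ has very few triangular edges (fewer than $2\lfloor n/2\rfloor - 1 < n$), and it has spectral radius at least $\lambda(T_{n,2}) = \sqrt{\lfloor n^2/4\rfloor}$. First I would record what the Perron eigenvector tells us. Let $\bm{x}$ be the (nonnegative, unit) Perron eigenvector of $A(G)$ and let $u$ be a vertex with $x_u = \max_v x_v$. From $\lambda^2 x_u = \sum_{w} (A^2)_{uw} x_w$ and the bound $x_w \le x_u$, together with $\lambda^2 \ge \lfloor n^2/4\rfloor$, one gets that the number of walks of length $2$ from $u$ is at least roughly $n^2/4$; hence $u$ has large degree and, more importantly, almost all vertices lie within distance $2$ of $u$ along many such walks. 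The cleaner route, however, is to work through edges and Lemma~\ref{lem-partition}: we have a max-cut partition $V(G) = S \cup T$ with $e(S)+e(T) < 6\sqrt{n}$, with $e(S,T) > n^2/4 - 9\sqrt{n}$, and with both parts of size $n/2 \pm 3n^{1/4}$.

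The main step is a counting argument showing that a vertex of small degree forces many triangular edges. Suppose for contradiction that $|L| \ge 10$. Take any $v \in L$, say $v \in S$, so $d(v) \le (\tfrac12 - \tfrac{1}{200})n$. Then $v$ is non-adjacent to at least $|T| - d(v) \ge (\tfrac12 - 3n^{-3/4})n - (\tfrac12 - \tfrac{1}{200})n \ge \tfrac{n}{250}$ vertices of $T$ (for $n$ large, here $n \ge 5432$ suffices). I would argue that each such non-neighbor $w \in T$ of $v$, when we try to restore the "missing" cross-edge $vw$, reveals structure: since $e(S,T)$ is so close to $n^2/4$ and $e(S)+e(T)$ is tiny, most vertices of $S$ are adjacent to most of $T$ and vice versa, so $v$ and $w$ have a large common neighborhood inside the bipartite part; every edge from $v$ (resp. from $w$) to that common neighborhood is a triangular edge once we also note the edge $e(S)$ or a second missing edge completes a triangle. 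More directly: a vertex $v$ of small degree means the bipartite graph $G[S,T]$ is missing $\gtrsim n/250$ edges at $v$; since the total number of missing cross-edges is $< 9\sqrt{n} + (\text{stuff})$, summing the deficiency over the $\ge 10$ vertices of $L$ already forces the missing-edge count to exceed what Lemma~\ref{lem-partition} permits once $n$ is large — unless these low-degree vertices overlap heavily, but $|T| - d(v)$ summed over $10$ vertices is $\ge \tfrac{10n}{250} = \tfrac{n}{25} \gg 9\sqrt{n}$, and the cross-edges genuinely missing at distinct vertices of $L$ cannot be "shared" beyond a factor of $2$. This contradiction gives $|L| < 10$.

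I would organize it as: (1) from $e(S)+e(T) < 6\sqrt{n}$ and the max-cut property, deduce $\deg_{S \cup T}$ bounds forcing "most vertices have degree close to $n/2$"; (2) convert $v \in L$ into a lower bound $\ge n/250$ on the number of cross-non-edges incident to $v$; (3) observe $\sum_{v \in V(G)} (\text{cross-non-edges at } v) = 2 \cdot (\text{total cross-non-edges}) = 2\big(|S||T| - e(S,T)\big) < 2\big(9\sqrt{n} + (|S||T| - n^2/4 + 9\sqrt n)\big)$, which is $O(\sqrt n + n^{1/2}) = O(\sqrt n)$ since $|S||T| \le n^2/4$; (4) if $|L|\ge 10$ then the left side is $\ge 10 \cdot n/250 = n/25$, contradicting $O(\sqrt n)$ for $n \ge 5432$. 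The hard part will be getting the constants to line up cleanly — in particular pinning down the exact count of missing cross-edges (it is $|S||T| - e(S,T)$, which is at most $9\sqrt n$ plus the "cut defect" $n^2/4 - |S||T| \le 0$, so in fact at most $9\sqrt n$), and making sure the degree bound defining $L$ (threshold $(\tfrac12 - \tfrac{1}{200})n$) really does translate into a linear-in-$n$ deficiency that dwarfs $\sqrt n$. Once those inequalities are arranged, the bound $|L| < 10$ drops out, and indeed one could push the "$10$" lower, but $10$ is all that is needed downstream.
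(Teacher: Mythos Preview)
Your counting-of-missing-cross-edges approach is sound in principle and is a genuinely different route from the paper, but there is a real arithmetic gap: the constants do not close at $n\ge 5432$. Concretely, for $v\in L\cap S$ you get
\[
|T|-d_T(v)\ \ge\ |T|-d(v)\ \ge\ \frac{n}{200}-3n^{1/4},
\]
and at $n=5432$ this is about $1.4$, not $n/250\approx 21.7$; your claimed inequality $\frac{n}{200}-3n^{1/4}\ge \frac{n}{250}$ requires $n^{3/4}\ge 3000$, i.e.\ $n\gtrsim 43{,}000$. Worse, even with the correct per-vertex bound, summing over ten vertices of $L$ gives only $n/20-30n^{1/4}\approx 14$ at $n=5432$, while the upper bound $2(|S||T|-e(S,T))<18\sqrt{n}\approx 1326$ is far larger, so no contradiction arises. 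Your argument in fact needs $n/20>18\sqrt{n}$ plus lower-order terms, roughly $n\gtrsim 1.5\times 10^{5}$.

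The paper sidesteps this entirely with a different idea: delete a set $L'\subseteq L$ of exactly ten low-degree vertices and count edges in the remainder. Since $e(G)\ge n^2/4-3\sqrt{n}$ and each deleted vertex removes at most $(\tfrac12-\tfrac{1}{200})n$ edges, one gets
\[
e(G\setminus L')\ \ge\ \frac{n^2}{4}-3\sqrt{n}-10\Bigl(\tfrac12-\tfrac{1}{200}\Bigr)n\ \ge\ \frac{(n-10)^2}{4}+25
\]
for $n\ge 5416$; then an appeal to (a mild strengthening of) the Erd\H{o}s--Faudree--Rousseau theorem, or the F\"uredi--Maleki bound, forces $G\setminus L'$ to contain more than $n+1$ triangular edges, contradicting the hypothesis on $G$. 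The point is that deleting the ten low-degree vertices converts the $\tfrac{1}{200}n$ degree-slack into an edge surplus of order $n/20$, which comfortably dominates the $3\sqrt{n}$ deficit and gives the small explicit threshold. Your cross-non-edge count, by contrast, has to beat an $18\sqrt{n}$ budget with only a linear-in-$n$ quantity whose leading coefficient is $1/20$, and that simply requires a much larger $n$.
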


\begin{proof}
 Suppose that $|L| \ge 10$.
 Then let $L^{\prime}\subseteq L$ with $|L^{\prime}|=10$. 
 We consider the subgraph of $G$ obtained by deleting all the vertices of $L'$. 
It follows that
   \begin{eqnarray*}
e(G\setminus L^{\prime}) &\ge& e(G)-\sum_{v\in L^{\prime}}d(v)\\
&\ge& \frac{n^2}{4}- 3\sqrt{n} 
-10 \left(\frac{1}{2}-\frac{1}{200} \right)n\\
&\ge & \frac{(n-10)^2}{4}+25, 
     \end{eqnarray*}
where the last  inequality holds for $n\ge 5416$. 
By modifying the proof of Theorem~\ref{thm-EFR}, 
 we can see that 
 the subgraph $G\setminus L^{\prime}$ contains more than $n+1$ triangular edges, a contradiction 
 (In fact, a result of F\"{u}redi and Maleki \cite[Theorem 1.2]{FM2017} can indicate 
 more triangular edges in $G\setminus L'$). 
So we have $|L| < 10$. 
 \end{proof}

\begin{lemma}\label{W-Lenpty}
We have
$|W| < \frac{1680}{\sqrt{n}}.$
\end{lemma}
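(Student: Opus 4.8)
The plan is a one–step double counting. Write $W_S = W\cap S$ and $W_T = W\cap T$, so that $|W| = |W_S| + |W_T|$ since $S$ and $T$ partition $V(G)$. By the definition of $W$, every vertex $v\in W_S$ satisfies $d_S(v)\ge n/140$, and these degrees are counted inside the induced subgraph $G[S]$. Hence
\[
2e(S) \;=\; \sum_{v\in S} d_S(v) \;\ge\; \sum_{v\in W_S} d_S(v) \;\ge\; |W_S|\cdot\frac{n}{140},
\]
which gives $|W_S| \le 280\,e(S)/n$. The identical argument applied to $T$ yields $|W_T| \le 280\,e(T)/n$.

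Adding these two bounds and invoking the estimate $e(S)+e(T) < 6\sqrt{n}$ furnished by Lemma \ref{lem-partition}, we obtain
\[
|W| \;=\; |W_S| + |W_T| \;\le\; \frac{280\bigl(e(S)+e(T)\bigr)}{n} \;<\; \frac{280\cdot 6\sqrt{n}}{n} \;=\; \frac{1680}{\sqrt{n}},
\]
as claimed.

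There is essentially no obstacle here: the bound follows directly from the threshold $n/140$ in the definition of $W$ together with the ``almost bipartite'' estimate $e(S)+e(T)<6\sqrt{n}$ already established. The only thing to be slightly careful about is that $S$ and $T$ are genuinely disjoint (so the two counts do not interact) and that the degree threshold in the definition of $W$ is measured within the same side, which is exactly what the definition states. The constant $1680 = 280\cdot 6$ is chosen precisely so that the threshold $n/140$ and the bound $6\sqrt{n}$ combine cleanly; had one wanted a stronger conclusion, one could instead feed in the sharper bound $e(G)>n^2/4-3\sqrt{n}$ with a maximum–cut partition, but the stated estimate already suffices for the later arguments.
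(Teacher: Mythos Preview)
Your proof is correct and essentially identical to the paper's own argument: both split $W$ into $W\cap S$ and $W\cap T$, bound each via the degree-sum identity $2e(S)\ge |W\cap S|\cdot n/140$ (and likewise for $T$), and then invoke $e(S)+e(T)<6\sqrt{n}$ from Lemma~\ref{lem-partition} to obtain $|W|<1680/\sqrt{n}$. The only differences are notational.
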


\begin{proof}
We denote $W_1=W\cap S$ and $W_2=W\cap T$. Then 
 $$2e(S) =\sum_{u\in S}d_{S}(u) \ge  \sum_{u\in W_1}d_S(u)\ge  \frac{n}{140} |W_1| $$ 
 and 
 \[  2e(T) = \sum_{u\in T}d_{T}(u)\ge  \sum_{u\in W_2}d_T(u)
 \ge \frac{n}{140} |W_2| . \]
  So we obtain 
  \begin{equation*} \label{WL-2}
  e(S)+e(T)\ge (|W_1|+|W_2|)\frac{ n}{280} 
  =\frac{|W|  n}{280}.
  \end{equation*}
 On the other hand, according to Lemma \ref{lem-partition}, we have 
 \[  e(S)+e(T)< 6\sqrt{n}. \] 
  Then we get ${|W| n}/{280}<  6\sqrt{n}$, 
that is,
$ |W|< {1680}/{\sqrt{n}}$, as needed. 
  \end{proof}

  We will also need the following inclusion-exclusion principle.

\begin{lemma}\label{set}
Let $A_1, A_2,\dots, A_k$ be $k$  finite sets. Then
\begin{equation*}\label{set1}
\left|\bigcap_{i=1}^k A_i \right| \ge \sum_{i=1}^k |A_i|-(k-1)\left|\bigcup_{i=1}^k A_i\right|.
\end{equation*}
\end{lemma}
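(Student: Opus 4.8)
The statement to prove is the inclusion--exclusion inequality
\[
\left|\bigcap_{i=1}^k A_i \right| \ge \sum_{i=1}^k |A_i|-(k-1)\left|\bigcup_{i=1}^k A_i\right|.
\]
The plan is to argue by induction on $k$. The base case $k=1$ is trivial (both sides equal $|A_1|$), and the case $k=2$ is just the familiar identity $|A_1\cap A_2| = |A_1| + |A_2| - |A_1\cup A_2|$, which already gives equality. For the inductive step, I would set $B = \bigcap_{i=1}^{k-1} A_i$ and apply the $k=2$ case to the pair $B, A_k$, obtaining
\[
\left|\bigcap_{i=1}^k A_i\right| = |B\cap A_k| = |B| + |A_k| - |B\cup A_k| \ge |B| + |A_k| - \left|\bigcup_{i=1}^k A_i\right|,
\]
where the last step uses $B\cup A_k \subseteq \bigcup_{i=1}^k A_i$ so that $|B\cup A_k|\le |\bigcup_{i=1}^k A_i|$. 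Then I would invoke the inductive hypothesis on $A_1,\dots,A_{k-1}$ to bound $|B| = |\bigcap_{i=1}^{k-1}A_i| \ge \sum_{i=1}^{k-1}|A_i| - (k-2)|\bigcup_{i=1}^{k-1}A_i|$, and finally use $\bigcup_{i=1}^{k-1}A_i \subseteq \bigcup_{i=1}^{k}A_i$ to replace $|\bigcup_{i=1}^{k-1}A_i|$ by $|\bigcup_{i=1}^{k}A_i|$ at the cost of the inequality direction (since its coefficient $-(k-2)$ is nonpositive for $k\ge 2$). Combining these yields
\[
\left|\bigcap_{i=1}^k A_i\right| \ge \sum_{i=1}^{k-1}|A_i| - (k-2)\left|\bigcup_{i=1}^{k}A_i\right| + |A_k| - \left|\bigcup_{i=1}^{k}A_i\right| = \sum_{i=1}^{k}|A_i| - (k-1)\left|\bigcup_{i=1}^{k}A_i\right|,
\]
as desired.

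An alternative, and perhaps cleaner, route is a direct double-counting / complementation argument: passing to complements inside the union $U = \bigcup_{i=1}^k A_i$, write $A_i^c = U\setminus A_i$, so that $U \setminus \bigcap_{i=1}^k A_i = \bigcup_{i=1}^k A_i^c$. By the union bound, $\left|\bigcup_{i=1}^k A_i^c\right| \le \sum_{i=1}^k |A_i^c| = \sum_{i=1}^k (|U| - |A_i|) = k|U| - \sum_{i=1}^k|A_i|$. Hence $|\bigcap_{i=1}^k A_i| = |U| - |\bigcup_{i=1}^k A_i^c| \ge |U| - k|U| + \sum_{i=1}^k |A_i| = \sum_{i=1}^k|A_i| - (k-1)|U|$, which is exactly the claim. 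I would probably present this second argument as the main proof since it is a one-line consequence of the union bound, and mention the induction only as a remark if at all.

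There is essentially no obstacle here: the result is elementary and the only mild care needed is to keep track of the fact that enlarging the union $\bigcup_{i=1}^{k-1}A_i$ to $\bigcup_{i=1}^{k}A_i$ preserves the inequality because it is subtracted with a nonnegative coefficient. The inequality is tight whenever the $A_i$ are ``as disjoint as possible'' relative to their common intersection; in the application in this paper the sets $A_i$ will be neighborhoods (or non-neighborhoods) of vertices, all sitting inside a bounded ground set, so the bound will be used to force a large common neighborhood from the assumption that each $|A_i|$ is close to $|U|$.
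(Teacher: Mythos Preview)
Your proof is correct; both the inductive argument and the complementation/union-bound argument are valid, and the second one is indeed the cleanest route. The paper itself does not supply a proof of this lemma at all --- it is stated as a standard inclusion--exclusion fact and used directly --- so there is nothing to compare against, and your write-up would serve perfectly well as the omitted justification.
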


  \begin{lemma} \label{lem-W-sub-L}
  We have $W \subseteq L$ and $|W| \le |L| < 10$. 
  \end{lemma}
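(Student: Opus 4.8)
The plan is to establish the inclusion $W\subseteq L$; the chain $|W|\le|L|<10$ then follows at once from Lemma~\ref{Lupper}. So I would fix $v\in W$ and, using the $S\leftrightarrow T$ symmetry in the definition of $W$, assume $v\in S$, so that $d_S(v)\ge n/140$. Arguing by contradiction, suppose $v\notin L$, i.e.\ $d(v)>(\tfrac12-\tfrac1{200})n$. Since $d_S(v)\le 2e(S)\le 2\bigl(e(S)+e(T)\bigr)<12\sqrt n$ by Lemma~\ref{lem-partition}, this yields $d_T(v)=d(v)-d_S(v)>(\tfrac12-\tfrac1{200})n-12\sqrt n$; thus $v$ has both $\Theta(n)$ neighbours inside $S$ and almost $n/2$ neighbours in $T$.

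The second step exploits the near-completeness of $G[S,T]$. By Lemma~\ref{lem-partition}, $|S|\,|T|\le n^2/4$ and $e(S,T)>n^2/4-9\sqrt n$, so the number of $S$--$T$ non-edges is $\sum_{u\in S}\bigl(|T|-d_T(u)\bigr)=|S|\,|T|-e(S,T)<9\sqrt n$. Hence fewer than $9$ vertices $u\in S$ satisfy $d_T(u)<|T|-\sqrt n$, and since $|N_S(v)|=d_S(v)\ge n/140>9$ there is a set $N'\subseteq N_S(v)$ with $|N'|\ge n/140-9$ and $d_T(u)\ge|T|-\sqrt n$ for every $u\in N'$. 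For such a $u$, the inclusion--exclusion bound (Lemma~\ref{set} with $k=2$, applied to $N_T(u)$ and $N_T(v)$ as subsets of $T$) gives
\[
|N_T(u)\cap N_T(v)|\ \ge\ d_T(u)+d_T(v)-|T|\ \ge\ d_T(v)-\sqrt n\ >\ \Bigl(\tfrac12-\tfrac1{200}\Bigr)n-13\sqrt n .
\]

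Finally I would harvest triangular edges. For $u\in N'$ and $w\in N_T(u)\cap N_T(v)$, the edge $uw$ lies in the triangle $uvw$ and hence is triangular; moreover distinct pairs $(u,w)$ of this kind yield distinct edges, since $u$ is the endpoint in $S$ and $w$ the endpoint in $T$. Consequently $G$ has at least $\sum_{u\in N'}|N_T(u)\cap N_T(v)|\ge\bigl(\tfrac{n}{140}-9\bigr)\bigl((\tfrac12-\tfrac1{200})n-13\sqrt n\bigr)$ triangular edges, a quantity that is $\Omega(n^2)$ and in particular exceeds $2\lfloor n/2\rfloor-1$ for $n\ge 5432$ -- contradicting the standing hypothesis. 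This forces $v\in L$, whence $W\subseteq L$. The only genuine work is the arithmetic: one must carry the $O(\sqrt n)$ error terms supplied by Lemma~\ref{lem-partition} and verify that the resulting quadratic lower bound beats the linear target on the whole range $n\ge 5432$. I expect no structural difficulty, since a vertex of large degree that additionally keeps $\Theta(n)$ neighbours inside its own part must span, via the almost-complete bipartite graph $G[S,T]$, quadratically many triangular edges.
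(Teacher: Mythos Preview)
Your argument is correct but follows a genuinely different route from the paper. The paper proves the contrapositive: for $u\notin L$ it first invokes the maximum-cut property to get $d_T(u)\ge d(u)/2>(\tfrac14-\tfrac1{400})n$, and then shows $u$ can have at most $7$ neighbours in $S\setminus(W\cup L)$ --- otherwise $u$ together with $8$ such neighbours would share a common $T$-neighbourhood of size $>n/9$, producing just over $n$ triangular edges. This, together with Lemma~\ref{W-Lenpty}, yields $d_S(u)\le |W|+|L|+7<n/140$, hence $u\notin W$.

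You instead exploit the near-completeness of $G[S,T]$ directly: from $|S|\,|T|-e(S,T)<9\sqrt n$ you deduce that all but fewer than $9$ vertices of $S$ have $d_T\ge|T|-\sqrt n$, so at least $n/140-9$ of $v$'s $S$-neighbours enjoy this property; pairing each with $v$ produces $\Omega(n)$ triangular cross-edges, for a total of $\Omega(n^2)$. This avoids both the max-cut step and Lemma~\ref{W-Lenpty}, making the lemma essentially self-contained modulo Lemma~\ref{lem-partition}, and the contradiction is far stronger than needed (at $n=5432$ your product is already about $5\cdot10^4$, against a target of roughly $n$). One cosmetic remark: your informal claim that ``$v$ has $\Theta(n)$ neighbours inside $S$'' sits oddly next to the simultaneous bound $d_S(v)<12\sqrt n$; the two are compatible on the relevant range, and in the actual argument you only use $d_S(v)\ge n/140$ as a lower bound, so nothing is harmed.
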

  
  \begin{proof}
We shall prove that if $u\notin L$, then $u\notin W$. 
We denote $L_1=L\cap S$ and $L_2=L\cap T$. Without loss of generality, we may assume that $u\in S$ and 
   $u\notin L_1.$ Since  $S$ and $T$ form a maximum cut in $G$, 
   we claim that $d_T(u)\ge \frac{1}{2}d(u)$. 
   Otherwise, if $d_T(u)< \frac{1}{2}d(u)$, then 
   by $d(u)=d_S(u) + d_T(u)$, we have 
   $d_S(u) > d_T(u)$. Moving the vertex $u$ from 
   $S$ to $T$ yields a new vertex bipartition with more edges, 
   which contradicts with the maximality of $G[S,T]$. 
   So we must have $d_T(u)\ge \frac{1}{2}d(u)$. 
  On the other hand, 
  we have $d(u)> \left(\frac{1}{2}-\frac{1}{200} \right)n$ 
  since $u\not\in L$. Then 
   $$d_T(u)\ge \frac{1}{2}d(u)> 
   \left(\frac{1}{4}-\frac{1}{400} \right)n.$$
  Recall that 
  $|L|< 10$ and $|W| < 1680/ \sqrt{n}$, 
we have 
$  |S\setminus (W\cup L)| \approx \frac{n}{2}$. 
  We claim that $u$ has at most $7$ neighbors in 
  $S\setminus (W\cup L)$. 
  Indeed, suppose on the contrary that 
   $u$ is adjacent to $8$ vertices  $u_1, u_2,\ldots, u_8$ in $S\setminus (W\cup L)$.  Since $u_i\not\in L$,  we have $d(u_i)> \left(\frac{1}{2}-\frac{1}{200}\right )n$. Similarly, 
   we have $d_S(u_i)< \frac{n}{140}$ as $u_i\notin W$. So
  \[  d_T(u_i)=d(u_i)-d_S(u_i)> \left(\frac{1}{2}-\frac{1}{200} - \frac{1}{140} \right)n. \] 
   By Lemma~\ref{set}, we have
   \begin{eqnarray*}
&& \left|N_T(u)\cap N_T(u_1) \cap \cdots 
\cap N_T(u_8) \right|\\[2mm]
 &\ge&  |N_T(u)|+ |N_T(u_1)| + \cdots + |N_T(u_8)|  -8|T| \\
& > &\left (\frac{1}{4}-\frac{1}{400}\right)n+\left(\frac{1}{2}-\frac{1}{200} -\frac{1}{140} \right)n\cdot 8 - 8 \left( \frac{n}{2} + 3n^{1/4}\right)\\
 &>&\frac{n}{9}, 
\end{eqnarray*}
 where the last inequality holds for $n\ge 5191$.  
Let $B$ be the set of common neighbors of $u,u_1,\ldots ,u_8$ in $T$. 
Then $|B|>n/9$. 
Observe that for each vertex $v\in B$, the $vuu_i$ forms a triangle 
for each $1\le i\le 8$, so $vu,vu_i (1\le i \le 8)$ are triangular edges. That is to say, each vertex of $B$ 
is incident to at least $9$ triangular edges. 
This leads to more than $9|B| +8 > n +8$ triangular edges, a contradiction.
Therefore $u$ is adjacent to at most $7$ vertices in  $S\setminus (W\cup L)$. 
Recall that $|L|\le 9$ by Lemma \ref{Lupper}.  
Hence, for $n\ge 5432$, we have 
\[  d_S(u) \le |W|+|L|+ 7 
<  \frac{1680}{\sqrt{n}} + 16 < \frac{n}{140}. \] 
By definition, we get $u\notin W$.
This completes the proof. 
 \end{proof}

\begin{lemma}  \label{lem-one-edge}
We have 
$e(S\setminus L) \le 1$ and $e(T\setminus L) \le 1$. 
Consequently, 
there exist  independent sets
 $I_S\subseteq S\setminus L$  and $I_T\subseteq T\setminus L$ such that
$ |I_S|\ge  |S|-10$
   and 
$|I_T|\ge |T|-10 $. 
 \end{lemma}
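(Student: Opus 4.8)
The plan is to show that $e(S\setminus L)\le 1$ (the argument for $T\setminus L$ being identical by symmetry), and then deduce the independent-set statement by simply removing the at most $|L|<10$ vertices of $L$ together with at most one extra vertex incident to the possible single edge. First I would suppose for contradiction that $G[S\setminus L]$ contains two distinct edges, say $xy$ and $x'y'$ (either sharing a vertex or not). Since $x,y,x',y'\notin L$, each of them has degree more than $\left(\tfrac12-\tfrac1{200}\right)n$, and since $x,y\notin W$ (here I invoke Lemma~\ref{lem-W-sub-L}, which gives $W\subseteq L$, so $x,y\notin L$ forces $x,y\notin W$), each of $x,y$ has at most $n/140$ neighbours inside $S$; the same holds for $x',y'$. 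Hence $d_T(x),d_T(y),d_T(x'),d_T(y')>\left(\tfrac12-\tfrac1{200}-\tfrac1{140}\right)n$.

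Next I would apply the inclusion–exclusion bound of Lemma~\ref{set} to the four (or three, if the edges share a vertex) sets $N_T(x),N_T(y),N_T(x'),N_T(y')$, using $|T|<\tfrac n2+3n^{1/4}$ from Lemma~\ref{lem-partition}. In the disjoint case this yields a common neighbourhood $B\subseteq T$ of size at least $4\left(\tfrac12-\tfrac1{200}-\tfrac1{140}\right)n-3\left(\tfrac n2+3n^{1/4}\right)$, which is a positive density of $n$ for $n$ large (comfortably for $n\ge 5432$). Every $v\in B$ then lies in the triangles $vxy$ and $vx'y'$, so each such $v$ is incident to the four triangular edges $vx,vy,vx',vy'$; counting only these, together with the two edges $xy$ and $x'y'$, gives at least $4|B|+2$ triangular edges, and $4|B|+2 > 2\lfloor n/2\rfloor-1$ for $n$ large, contradicting our standing assumption that $G$ has fewer than $2\lfloor n/2\rfloor-1$ triangular edges. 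The case where $xy$ and $x'y'$ share a vertex (so three vertices $x,y,y'$ are involved) is handled the same way with three sets in Lemma~\ref{set}, giving $|B|$ of positive density and at least $3|B|+2$ triangular edges, again a contradiction. Thus $e(S\setminus L)\le 1$, and symmetrically $e(T\setminus L)\le 1$.

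Finally, for the consequence: inside $S\setminus L$ the subgraph has at most one edge, so deleting one endpoint of that edge (if it exists) leaves an independent set $I_S\subseteq S\setminus L$ with $|I_S|\ge |S\setminus L|-1\ge |S|-|L|-1\ge |S|-10$, using $|L|<10$ from Lemma~\ref{Lupper}; the bound $|I_T|\ge|T|-10$ follows in exactly the same way. The main obstacle is purely bookkeeping: making sure the numerical slack in the inclusion–exclusion estimate is genuinely positive and large enough that $c n$ common neighbours produce more than $2\lfloor n/2\rfloor-1$ triangular edges, which forces a careful choice of the constants $1/200$ and $1/140$ and the threshold on $n$; conceptually there is nothing deep here beyond the already-established smallness of $W$ and $L$ and the maximum-cut property that keeps most of each vertex's degree on the opposite side.
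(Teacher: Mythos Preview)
Your proposal is correct and follows essentially the same approach as the paper: assume two edges in $G[S\setminus L]$, use $W\subseteq L$ (Lemma~\ref{lem-W-sub-L}) to force large $d_T$ on their endpoints, apply inclusion--exclusion (Lemma~\ref{set}) in the intersecting and disjoint cases to find a common neighbourhood $B\subseteq T$ of size $\Theta(n)$, and count the resulting triangular edges to exceed the allowed bound; the independent-set consequence is derived identically. The only cosmetic difference is that the paper carries out the arithmetic explicitly (obtaining $|B|>n/3$ and $|B|>n/4$ in the two cases, for $n\ge 166$ and $n\ge 159$ respectively), whereas you leave the numerics as a remark.
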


\begin{proof}
Firstly, we show that $e(S\setminus L) \le 1$ and $e(T\setminus L) \le 1$. 
Suppose on the contrary that 
 $G[S\setminus L]$ contains two edges, say $e_1,e_2$.  
 We shall deduce a contradiction in two cases. 
 
If $e_1$ and $e_2$ are intersecting, then we assume that 
$e_1=\{u_1,u_2\}$ and $e_2=\{u_1,u_3\}$. 
Since $u_1, u_2, u_{3} \notin L$, we get  
$d(u_i)> \left(\frac{1}{2}-\frac{1}{200} \right)n$.
By Lemma~\ref{lem-W-sub-L}, we have $u_i \notin W$ and 
$d_S(u_i)<  \frac{n}{140}$. Hence
$d_T(u_i)=d(u_i)-d_S(u_i)> \left(\frac{1}{2}-\frac{1}{200}-\frac{1}{140}\right)n.$
By Lemma~\ref{set}, we  get 
    \begin{eqnarray*}
\left |\bigcap_{i=1}^{3}N_T(u_i) \right| &\ge &  \sum_{i=1}^{3}|N_T(u_i)|-2 \left|\bigcup_{i=1}^{3}N_T(u_i) \right| \\
& > & \left (\frac{1}{2}-\frac{1}{200}-\frac{1}{140} \right)n\cdot 3 - 
2 \left(\frac{n}{2}+ 3n^{1/4} \right) \\
 &>& \frac{n}{3}, 
\end{eqnarray*}
where the last inequality holds for $n\ge 166$. 
Consequently, each vertex of the common neighbors of 
$\{u_1,u_2,u_3\}$ leads to at least $3$ new triangular edges,  so $G$ has more than $n$ triangular edges, which is a contradiction.

If $e_1$ and $e_2$ are disjoint, then we denote 
$e_1=\{u_1,u_2\}$ and $e_2=\{u_3,u_4\}$. Similarly, we can see that 
    \begin{eqnarray*}
\left |\bigcap_{i=1}^{4}N_T(u_i) \right| &\ge &  \sum_{i=1}^{4}|N_T(u_i)|-3 \left|\bigcup_{i=1}^{4}N_T(u_i) \right| \\
& > & \left (\frac{1}{2}-\frac{1}{200}-\frac{1}{140} \right)n\cdot 4 - 
3 \left(\frac{n}{2}+ 3n^{1/4} \right) \\
 &>& \frac{n}{4}, 
\end{eqnarray*}
where the last inequality holds for $n\ge 159$. 
In this case, we can also find more than $n$ triangular edges in $G$, a contradiction. Therefore, we conclude that $e(S\setminus L) \le 1$.

Now, by deleting  
at most one vertex from an edge in $G[S\setminus L]$, 
we can obtain 
 a large independent set. Since $|L|\le 9$ 
 by Lemma \ref{Lupper},  
  there exists an independent set
 $I_S\subseteq S\setminus L$ such that
  $
  |I_S|\ge |S\setminus L| - 1 \ge |S|-10$ by Lemma \ref{lem-W-sub-L}. 
 The same argument gives that there  is an independent set $I_T\subseteq T\setminus L$ with
$ |I_T|\ge |T|- 10$.  
 \end{proof}

Let $\mathbf{x}\in \mathbb{R}^n$ be an eigenvector vector corresponding to 
$\lambda (G)$. 
By the Perron--Frobenius theorem, we know that 
$\mathbf{x}$ has all non-negative entries. 
For a vertex $v\in V(G)$, we will write ${x}_v$ for 
 the eigenvector entry of $\mathbf{x}$ corresponding to $v$. 
 Let $z\in V(G)$ be a vertex with the maximum eigenvector entry. 
  Without loss of generality, we may assume by scaling that  ${x}_z=1$ and by symmetry that  $z\in S$.

\begin{lemma} \label{lem-IT}
We have $ \sum\limits_{ v\in I_T} {x}_v >  \frac{n}{2} - 21$.
\end{lemma}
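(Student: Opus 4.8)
\textbf{Proof proposal for Lemma \ref{lem-IT}.}
The plan is to use the eigenvalue equation at the maximum-entry vertex $z$ together with the crude structural information already established: $I_T$ is an independent set of size at least $|T|-10 > n/2 - 3n^{1/4} - 10$, and $G[S,T]$ is a near-complete bipartite graph missing only $O(\sqrt n)$ edges. First I would decide which side $z$ lies on; by symmetry, suppose $z \in S$ (the case $z\in T$ is analogous). From $\lambda x_z = \sum_{v\sim z} x_v$ and $\lambda = \lambda(G) \ge \lambda(T_{n,2}) > (n-1)/2$ with $x_z=1$, I get $\sum_{v\sim z} x_v > (n-1)/2$. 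The neighbours of $z$ inside $S$ contribute at most $d_S(z) \le n/140$ terms, each of entry at most $1$; so the contribution from $N(z)\cap T$ is already more than $(n-1)/2 - n/140$. Since $N(z)\cap T$ differs from $I_T$ by at most $|T\setminus I_T| + (\text{missing edges from } z) \le 10 + e(S,T)\text{-deficit}$ vertices — more precisely $|I_T \setminus N(z)| \le 10 + (n^2/4 - e(S,T)) < 10 + 9\sqrt n$ — and each entry is at most $1$, I can transfer the bound from $\sum_{v\in N(z)\cap T} x_v$ to $\sum_{v\in I_T} x_v$ at a cost of at most $10 + 9\sqrt n + n/140$, which for $n\ge 5432$ is well below the slack I need.

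A cleaner route, which I expect is what the authors intend, avoids estimating $x_z$'s neighbourhood directly and instead applies the eigenvalue equation to \emph{every} vertex of $I_S$ and averages, or applies it once to a carefully chosen vertex. Concretely: pick any $u \in I_S$ (nonempty since $|I_S| \ge |S|-10 > 0$). Then $\lambda x_u = \sum_{v\sim u} x_v \le \sum_{v\in I_T} x_v + \sum_{v\in T\setminus I_T} x_v + d_S(u)\cdot 1$, where I used that $u$ is independent in $S\setminus L$ so its $S$-neighbours are few, $d_S(u) < n/140$ (as $u\notin W$). Running the inequality in the other direction, $\lambda x_u \ge \sum_{v\in I_T \cap N(u)} x_v \ge \sum_{v\in I_T} x_v - |I_T\setminus N(u)|$. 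The key point is that $\sum_{u\in I_S}|I_T \setminus N(u)| \le e(I_S, T) \text{-deficit} \le n^2/4 - e(S,T) < 9\sqrt n$, so for an \emph{average} $u\in I_S$ we have $|I_T\setminus N(u)| < 9\sqrt n/|I_S| < 1$; choosing such a $u$ (or just one with the fewest non-neighbours in $I_T$) gives $\lambda x_u \ge \sum_{v\in I_T} x_v - O(\sqrt n/n) > \sum_{v\in I_T}x_v - 1$. Now I need a lower bound on $x_u$: since $\lambda x_u \ge \sum_{v\in I_T}x_v$ up to a constant and I will separately show $\sum_{v\in I_T}x_v$ is large, this is almost circular, so instead I bound $x_u$ from below via a second application of the eigenvalue equation at $u$ restricted to its many neighbours in $I_T$, each of which in turn sees $z$ or sees many high-entry vertices — but the simplest sufficient bound is $x_u \ge \frac{1}{\lambda}\sum_{v\sim u}x_v \ge \frac{1}{\lambda}\cdot(\text{number of } T\text{-neighbours of } u) \cdot x_{\min}$, which is too weak.

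Therefore the honest approach is the first one: establish $\sum_{v\in I_T} x_v$ directly from the equation at $z$, using $x_z = 1$ as the normalization so that no lower bound on any other entry is needed. The main obstacle — and the only real content — is controlling the ``leakage'': the difference between $\sum_{v\sim z}x_v$ (which $\lambda x_z = \lambda > (n-1)/2$ hands us) and $\sum_{v\in I_T}x_v$. This leakage has three sources: (i) neighbours of $z$ inside $S$, bounded by $d_S(z) < n/140$ if $z\in S$ and handled trivially if $z\in T$; (ii) vertices of $T\setminus I_T$, at most $10$; (iii) vertices of $I_T$ not adjacent to $z$, at most the edge-deficit $n^2/4 - e(S,T) < 9\sqrt n$ plus, if $z\in S$, nothing extra. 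Summing, the leakage is at most $n/140 + 10 + 9\sqrt n$, whence $\sum_{v\in I_T}x_v > (n-1)/2 - n/140 - 10 - 9\sqrt n$. This is weaker than the claimed $n/2 - 21$, so the bound $d_S(z) < n/140$ must be sharpened: in fact one first shows $z\notin W\cup L$ is false — rather, one shows directly that $z$, being the max-entry vertex, must have $d_T(z)$ very close to $n/2$ and $d_S(z) = O(1)$, by the same common-neighbourhood-of-high-degree-vertices argument used in Lemma \ref{lem-W-sub-L} (if $z$ had many $S$-neighbours they would force many triangular edges). With $d_S(z) = O(1)$ the leakage drops to $O(\sqrt n)$, and since $O(\sqrt n) < 21 - 1/2$ fails, one instead notes $\lambda \ge \sqrt{\lfloor n^2/4\rfloor}$ gives $\lambda > n/2 - 1/n$, improving the starting point to $\sum_{v\sim z}x_v > n/2 - 1/n$; combined with leakage $O(\sqrt n)$ this still does not reach $n/2 - 21$ for moderate $n$, which is why the hypothesis $n \ge 5432$ enters — for $n$ in this range $9\sqrt n + O(1) < 21$ is \emph{false}, so the actual argument must bootstrap: use the weak bound $\sum_{v\in I_T}x_v = n/2 - O(\sqrt n)$ to deduce that almost all entries on $I_T$ are close to $1$, feed this back into the equation at $z$ to show the $S$-neighbours and the missing edges at $z$ actually contribute entries bounded away from $1$ or are absent, and close the gap to $n/2 - 21$. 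I expect the paper's proof to be short precisely because it invokes the already-proven structural lemmas (\ref{lem-W-sub-L}, \ref{lem-one-edge}) to make steps (i)–(iii) cheap, and I would follow that outline, doing the bootstrap only if the direct estimate genuinely misses the target constant.
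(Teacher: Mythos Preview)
Your proposal has a genuine gap, and it stems from a sign error in the ``leakage'' accounting. You want to show that $\sum_{v\in I_T} x_v$ is large, starting from $\lambda = \lambda x_z = \sum_{v\sim z} x_v$. Decomposing the right-hand side gives
\[
\lambda = \sum_{v\in N_S(z)} x_v \;+\; \sum_{\substack{v\sim z\\ v\in T\setminus I_T}} x_v \;+\; \sum_{\substack{v\sim z\\ v\in I_T}} x_v
\;\le\; d_S(z) \;+\; |T\setminus I_T| \;+\; \sum_{v\in I_T} x_v,
\]
where the last inequality uses $x_v\le 1$ and the trivial bound $\sum_{v\in N(z)\cap I_T} x_v \le \sum_{v\in I_T} x_v$ (all entries are non-negative). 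So your ``leakage (iii)'' --- vertices of $I_T$ \emph{not} adjacent to $z$ --- does not exist: those vertices contribute non-negatively to the target sum, and dropping them only helps, not hurts. There is no $9\sqrt{n}$ term, and consequently no need for any bootstrap.

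The second issue is that you bound $d_S(z)$ by $n/140$ (via $z\notin W$), and then propose to sharpen this to $O(1)$ by redoing a common-neighbourhood argument. But the sharp bound is already available: from $\lambda x_z \le d(z)$ you get $d(z) > (n-1)/2$, so $z\notin L$; then Lemma~\ref{lem-one-edge} gives $e(S\setminus L)\le 1$, whence $d_{S\setminus L}(z)\le 1$, and together with $|L|\le 9$ (Lemma~\ref{Lupper}) this yields $d_S(z)\le 10$. Plugging this and $|T\setminus I_T|\le 10$ into the display above gives
\[
\sum_{v\in I_T} x_v \;\ge\; \lambda - 20 \;>\; \frac{n-1}{2} - 20 \;>\; \frac{n}{2} - 21,
\]
which is exactly the paper's proof. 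No averaging, no missing-edge count, no bootstrap is needed.
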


\begin{proof} 
Considering $z$-th entry of 
 the eigenvector equation $A(G)\mathbf{x} = \lambda \mathbf{x}$, we have 
 $$  \frac{n-1}{2} < 
 \lambda (G)= \lambda (G) {x}_z = 
 \sum_{v\in N(z)} {x}_v \le d(z).$$
 Hence $z\notin L$. 
  By Lemma \ref{lem-W-sub-L}, we know that $W\subseteq L$ and 
  $|L|\le 9$. 
From Lemma~\ref{lem-one-edge}, we have 
$d_{S\setminus L}(z) \le 1$ and  
 \[  d_S(z)\le d_{S\setminus L}(z) + |L| \le 10. \] 
 Therefore, we get
 \begin{eqnarray*}
 \lambda (G)&= &  \lambda (G) {x}_z  = 
 \sum_{v\in N_S(z)} {x}_v 
 +\sum_{v\in N_T(z)} {x}_v\\
 &=& \sum_{v\in N_S(z)} {x}_v+  \sum_{v\sim z, v\in I_T} {x}_v 
 +\sum_{v\sim z , v\in T\setminus I_T} {x}_v\\  
 &\le&  10 +\sum_{ v\in I_T} {x}_v+ |T \setminus I_T|\\ 
&\le&  \sum_{ v\in I_T} {x}_v  +20.
\end{eqnarray*}
Recall that $\lambda (G)\ge \lambda (T_{n,2}) > \frac{n-1}{2}$. 
 So 
$ \sum\limits_{ v\in I_T} {x}_v> \frac{n}{2} - 21$, as desired.
\end{proof}

\begin{lemma}\label{Lempty}
We have $L = \varnothing$ and  $e(S) + e(T) \le 1$. 
\end{lemma}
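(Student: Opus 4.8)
The plan is to establish $L=\varnothing$ first, by a local switching argument exploiting the choice of $G$ as a counterexample of maximum spectral radius, and then to read off $e(S)+e(T)\le 1$ from Lemma~\ref{lem-one-edge} together with a short count of triangular edges. For the first part I would suppose $L\neq\varnothing$, fix $v\in L$ (so that $d(v)\le(\tfrac12-\tfrac1{200})n$), and recall from Lemma~\ref{lem-IT} that $z\in S$, that $I_T\subseteq T\setminus L$ is an independent set, and that $\sum_{u\in I_T}x_u>\tfrac n2-21$. I would form $G'$ on the same vertex set by deleting every edge at $v$ and joining $v$ to every vertex of $I_T$ (note $v\notin I_T$, as $v\in L$). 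Two things then need to be checked. First, $\lambda(G')>\lambda(G)$: here $G$ is connected (since $G[S,T]$ is $K_{|S|,|T|}$ with fewer than $9\sqrt n$ edges removed, hence connected, and it spans $V(G)$), so the Perron vector $\mathbf{x}$ is positive, and since $\sum_{u\sim_G v}x_u=\lambda(G)x_v\le d(v)x_z=d(v)\le(\tfrac12-\tfrac1{200})n<\tfrac n2-21<\sum_{u\in I_T}x_u$, the Rayleigh quotient gives
\[
\lambda(G')\ \ge\ \frac{\mathbf{x}^{\top}A(G')\mathbf{x}}{\mathbf{x}^{\top}\mathbf{x}}\ =\ \lambda(G)+\frac{2x_v\bigl(\sum_{u\in I_T}x_u-\lambda(G)x_v\bigr)}{\mathbf{x}^{\top}\mathbf{x}}\ >\ \lambda(G).
\]
Second, $G'$ is still a counterexample: its only new edges are the $\{v,u\}$ with $u\in I_T$, and since $N_{G'}(v)=I_T$ is an independent set, $v$ lies in no triangle of $G'$, while every triangle of $G'$ avoiding $v$ is already a triangle of $G$; hence every triangular edge of $G'$ is a triangular edge of $G$, so $G'$ again has fewer than $2\lfloor n/2\rfloor-1$ triangular edges, and $G'\neq T_{n,2}$ since $\lambda(G')>\lambda(T_{n,2})$. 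This contradicts the maximality of $\lambda(G)$, so $L=\varnothing$.

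Once $L=\varnothing$, Lemma~\ref{lem-one-edge} yields $e(S)\le 1$ and $e(T)\le 1$, so it remains only to rule out $e(S)=e(T)=1$. If $\{a_1,a_2\}\subseteq S$ and $\{b_1,b_2\}\subseteq T$ are the two edges, then since $L=W=\varnothing$ each of $a_1,a_2,b_1,b_2$ has degree more than $(\tfrac12-\tfrac1{200})n$ and at most $n/140$ neighbours inside its own part, so $d_T(a_i)>(\tfrac12-\tfrac1{200}-\tfrac1{140})n$, and likewise $d_S(b_j)$ is large. By the inclusion--exclusion principle (Lemma~\ref{set}),
\[
|N_T(a_1)\cap N_T(a_2)|\ \ge\ d_T(a_1)+d_T(a_2)-|T|\ >\ \Bigl(\tfrac12-\tfrac1{100}-\tfrac1{70}\Bigr)n-3n^{1/4},
\]
and the same bound holds for $|N_S(b_1)\cap N_S(b_2)|$. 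Each common neighbour $c$ of $a_1,a_2$ makes $\{a_1,a_2\},\{a_1,c\},\{a_2,c\}$ triangular, each common neighbour $c'$ of $b_1,b_2$ makes $\{b_1,b_2\},\{b_1,c'\},\{b_2,c'\}$ triangular, and the only edges counted on both sides lie among the four edges $\{a_i,b_j\}$. Summing, $G$ would contain more than $1.8n$ triangular edges, which for $n\ge 5432$ far exceeds $2\lfloor n/2\rfloor-1\le n-1$, a contradiction. Hence $e(S)+e(T)\le 1$.

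I expect the main obstacle to be the triangular-edge bookkeeping inside the switching step: one must argue cleanly that rerouting $N_G(v)$ onto the independent set $I_T$ can destroy but never create triangular edges, so that $G'$ stays inside the class of counterexamples while its spectral radius strictly increases. Given Lemma~\ref{lem-IT}, the Rayleigh-quotient estimate is immediate, and the count in the second part is routine once $L=W=\varnothing$, the bounds $|S|,|T|=\tfrac n2+O(n^{1/4})$ (Lemma~\ref{lem-partition}), and Lemma~\ref{lem-one-edge} are available.
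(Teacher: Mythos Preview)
Your proposal is correct and follows essentially the same route as the paper: the local switching of $v$ onto the independent set $I_T$ combined with the Rayleigh quotient (using Lemma~\ref{lem-IT}) to force $L=\varnothing$, and then the common-neighbour count from Lemma~\ref{lem-one-edge} to exclude $e(S)=e(T)=1$. You are in fact slightly more careful than the paper in justifying $x_v>0$ via connectivity of $G[S,T]$, and in the second part you use the weaker bound $d_S(a_i)<n/140$ coming from $W=\varnothing$ where the paper uses $d_S(a_i)\le 1$ from $e(S)\le 1$, but both choices lead to the same conclusion.
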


\begin{proof}
By way of contradiction, assume that there is a vertex  $v\in L$, then $d(v)\le (\frac{1}{2}-\frac{1}{200})n$.
We define a graph $G^+$ with the vertex set $V(G)$ and the edge set 
\[  E(G^+) = E(G \setminus \{v\}) \cup \{vw: w\in I_T\}. \] 
 Note that adding a vertex incident with vertices in $I_T$ does not create any triangular edges   
since $I_T$ is an independent set.  
By Lemma \ref{lem-IT},  we have 
\begin{eqnarray*}
\lambda (G^+) - \lambda (G) &\geq &
 \frac{\mathbf{x}^{\top} \left(A(G^+) - A(G)\right) \mathbf{x}}{\mathbf{x}^{\top} \mathbf{x}} \\
 & =& \frac{2 {x}_v}{\mathbf{x}^{\top}\mathbf{x}} 
 \biggl( \sum_{w\in I_T} {x}_w - \sum_{u\in N_G(v)} {x}_u\biggr) \\
& >& \frac{2 {x}_v}{\mathbf{x}^{\top} \mathbf{x}} 
\left( \frac{n}{2} - 21 - 
\left(\frac{1}{2}-\frac{1}{200} \right)n \right)\\
&=& \frac{2 {x}_v}{\mathbf{x}^{\top} \mathbf{x}} \left( \frac{n}{200}-21\right)> 0,
\end{eqnarray*}
where the last inequality holds for $n> 4200$. This contradicts 
with the maximality of the spectral radius  of $G$, so $L$ must be empty.

By Lemma \ref{lem-one-edge}, 
we  get 
$e(S) \le 1$ and $e(T) \le 1$. 
Since $L=\varnothing$, then for every vertex $v\in S$, we have 
$d(v) > (\frac{1}{2} - \frac{1}{200})n $ and 
$d_S(v) \le 1$. So $d_T(v) \ge \lfloor (\frac{1}{2} - \frac{1}{200} )n \rfloor$. The corresponding degree condition also holds for 
each vertex of $T$. We next show $e(S) + e(T)\le 1$. 
Assume otherwise, so that $e(S)=1 $ and $ e(T)=1$. 
Then we denote $e_1=\{v_1,v_2\}\in E(G[S])$. 
Observe that for $n\ge 137$, we have 
\[  |N_T(v_1) \cap N_T(v_2)| >  2 \left\lfloor \left(\frac{1}{2} - \frac{1}{200} \right)n \right\rfloor - \left(\frac{n}{2} + 3n^{1/4} \right) > \frac{2n}{5}.\]
Each vertex of the common neighbors of $v_1,v_2$ in $T$ can yield 
two triangular edges. There are more than $\frac{4}{5}n$ triangular edges 
between $\{v_1,v_2\}$ and $N_T(v_1) \cap N_T(v_2)$.  
Similarly, the edge in $G[T]$ can lead to at least $\frac{4n}{5} -4$ 
new triangular edges,  so $G$ has 
more than $\frac{7}{5}n$ triangular edges. 
This is a contradiction. Therefore, we have 
$e(S) + e(T)\le 1$, as required. 
\end{proof}

The most general result is the following structure theorem, which 
asserts that any graph with larger spectral radius than $T_{n,2}$ 
and few triangular edges can be approximated by 
an almost-balanced complete bipartite graph. 
Just like in the classical stability method, 
once we have proved that the extremal graph is quite close to the 
conjectured graph, we can show further that it
must be exactly the conjectured graph.

\begin{theorem}\label{STlambdarefine}
If $G$ is a graph of order $n$ with 
at most $n+1$ triangular edges, and $G$ has the maximum spectral radius, then 
$e(G)\ge \lfloor {n^2}/{4} \rfloor -3 $. 
Moreover, 
 there exists a vertex partition $V(G)=S\cup T$ such that  
$e(S,T)\ge \lfloor n^2/4 \rfloor -4$ and 
$\lceil {n}/{2} \rceil -2
\le |S|,  |T|\le \lfloor {n}/{2} \rfloor +2$. 
\end{theorem}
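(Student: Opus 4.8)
The plan is to synthesize the structural information already obtained in Lemmas~\ref{lem-partition}--\ref{Lempty} with the counting inequality of Lemma~\ref{thm-BN-CFTZ-NZ}; essentially no new idea is needed beyond what is already in place. Recall that we have a maximum-cut partition $V(G)=S\cup T$ with $e(S)+e(T)\le 1$ and $\tfrac n2-3n^{1/4}<|S|,|T|<\tfrac n2+3n^{1/4}$.

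First I would dispose of the possibility $e(S)+e(T)=0$. In that case $G$ is bipartite, hence triangle-free, so on the one hand $e(G)\le|S|\,|T|\le\lfloor n^2/4\rfloor$, and on the other hand Lemma~\ref{thm-BN-CFTZ-NZ} with $t(G)=0$ gives $e(G)\ge\lambda(G)^2$. Hence $\lambda(G)\le\sqrt{\lfloor n^2/4\rfloor}=\lambda(T_{n,2})\le\lambda(G)$, so equality holds throughout; the equality case of Lemma~\ref{thm-BN-CFTZ-NZ} then forces $G$ to be a complete bipartite graph with exactly $\lfloor n^2/4\rfloor$ edges, i.e.\ $G=T_{n,2}$, contradicting the choice of $G$. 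Therefore $e(S)+e(T)=1$, and, swapping $S$ and $T$ if necessary, we may take the unique internal edge to be $\{v_1,v_2\}\subseteq S$, with $T$ independent.

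Next comes the bound on $e(G)$. Because $T$ is independent and $\{v_1,v_2\}$ is the only edge of $G[S]$, every triangle of $G$ must use the edge $v_1v_2$, hence is of the form $\{v_1,v_2,w\}$ with $w\in N_T(v_1)\cap N_T(v_2)$. Consequently $t(G)=|N_T(v_1)\cap N_T(v_2)|$, and the triangular edges of $G$ are precisely $v_1v_2$ together with $v_1w$ and $v_2w$ for $w\in N_T(v_1)\cap N_T(v_2)$, so there are $1+2t(G)$ of them (and none when $t(G)=0$). Since by hypothesis $G$ has at most $n+1$ triangular edges, this gives $t(G)\le n/2$. Writing $\lambda:=\lambda(G)$ and using $\lambda\ge\lambda(T_{n,2})=\sqrt{\lfloor n^2/4\rfloor}\ge\tfrac{n-1}{2}$, Lemma~\ref{thm-BN-CFTZ-NZ} yields
\[
 e(G)\ \ge\ \lambda^2-\frac{3t(G)}{\lambda}\ \ge\ \Bigl\lfloor\frac{n^2}{4}\Bigr\rfloor-\frac{3n}{n-1}\ >\ \Bigl\lfloor\frac{n^2}{4}\Bigr\rfloor-4 ,
\]
since $3n/(n-1)<4$ for $n\ge 5$; as $e(G)$ is an integer, we conclude $e(G)\ge\lfloor n^2/4\rfloor-3$. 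This is the first assertion, and then $e(S,T)=e(G)-e(S)-e(T)=e(G)-1\ge\lfloor n^2/4\rfloor-4$.

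Finally, for the part sizes I would use only $|S|+|T|=n$ and $|S|\,|T|\ge e(S,T)\ge\lfloor n^2/4\rfloor-4$, both symmetric in $S$ and $T$. Since $|S|\,|T|$ is maximised when $\{|S|,|T|\}=\{\lceil n/2\rceil,\lfloor n/2\rfloor\}$, a short computation splitting on the parity of $n$ shows that $|S|\,|T|\ge\lfloor n^2/4\rfloor-4$ together with $|S|+|T|=n$ forces $\lceil n/2\rceil-2\le|S|,|T|\le\lfloor n/2\rfloor+2$, as claimed. I do not expect any genuine obstacle in this theorem: the heavy lifting (controlling $L$ and $W$, and reaching $e(S)+e(T)\le 1$) has already been carried out, and the only points requiring a little care are the integrality rounding $e(G)>\lfloor n^2/4\rfloor-4\Rightarrow e(G)\ge\lfloor n^2/4\rfloor-3$ and the parity bookkeeping in the last step.
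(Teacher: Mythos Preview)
Your proof is correct and follows essentially the same route as the paper: use $e(S)+e(T)\le 1$ from Lemma~\ref{Lempty} to bound $t(G)$, feed this into Lemma~\ref{thm-BN-CFTZ-NZ} to get $e(G)>\lfloor n^2/4\rfloor-4$, round to an integer, subtract the one internal edge, and read off the part sizes from $|S|\,|T|\ge e(S,T)$. The only cosmetic differences are that the paper bounds $t(G)\le |T|<\tfrac n2+3n^{1/4}$ directly from the partition (rather than via your count $1+2t(G)\le n+1$ of triangular edges), and the paper does not separately dispose of the case $e(S)+e(T)=0$ here, since the stated conclusions hold trivially in that case and the contradiction with $G\ne T_{n,2}$ is deferred to the proof of Theorem~\ref{thm-main}.
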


\begin{proof}
 From Lemma \ref{Lempty}, 
  we have $e(S) + e(T) \leq 1$.
  Since any triangle contains an  edge of  $E(S)\cup E(T)$, 
  the number of triangles in $G$ is bounded above by $\frac{n}{2} +3n^{1/4}$. 
       By Lemma \ref{thm-BN-CFTZ-NZ},
       we have $$e(G) \geq \lambda^2-\frac{6t}{n-1}> 
       \left\lfloor \frac{n^2}{4}  \right \rfloor-4.$$
Then 
\[ e(S,T) = e(G) - e(S) - e(T) > \frac{n^2}{4} -5. \]
By symmetry, we may assume that $|S| \le |T|$. 
       Suppose on the contrary  that $|S|\le \lceil \frac{n}{2} \rceil -3$.  Then $|T|=n-|S|\ge \lfloor \frac{n}{2} \rfloor +3$. 
       If $n$ is even, then it follows that 
       $e(S,T)\le  |S||T|\le  \left (\frac{n}{2}-3 \right) \left(\frac{n}{2}+3 \right)= \frac{n^2}{4}-9$, 
        which contradicts with $e(S,T)\ge {n^2}/{4} - 4$. 
        If $n$ is odd, then $e(S,T)\le (\frac{n+1}{2}-3)(\frac{n-1}{2} +3) =\frac{n^2-1}{4} -6$, a contradiction. 
        Thus, we have
        \[   \left\lceil \frac{n}{2} \right\rceil -2
\le |S|, |T|\le  \left\lfloor \frac{n}{2} \right\rfloor +2. \] 
This completes the proof. 
\end{proof}

Now, we are ready to present the proof of Theorem \ref{thm-main}. 

\begin{proof}[{\bf Proof of Theorem \ref{thm-main}}] 
Let $G$ be a graph on $n\ge 5432$ vertices with $\lambda (G)\ge \lambda (T_{n,2})$ and $G\neq T_{n,2}$. 
Suppose on the contrary that $G$ has at most $2 \lfloor n/2 \rfloor -2$ triangular edges. 
Furthermore, we also choose $G$ as a graph with the maximum spectral radius. In what follows, we will deduce a contradiction.  

\medskip 
First of all, we know from Theorem \ref{thmNZ2021} that $G$ contains at least $\lfloor n/2\rfloor -1$ triangles\footnote{
We use Theorem \ref{thmNZ2021} in order to avoid the complicated computations.}. 
By Theorem \ref{STlambdarefine}, 
$G$ is almost complete bipartite, 
and we have $n/2 -2\le |S|,|T| \le n/2 +2$.  
If $e(S) + e(T)=0$, 
then $G$ is a bipartite graph with color classes $S$ and $T$. 
So we have $\lambda (G)\le \sqrt{|S||T|} 
\le \sqrt{\lfloor n^2/4\rfloor}$ 
since $|S| + |T|=n$. 
On the other hand, our assumption gives 
$\lambda (G)\ge \lambda (T_{n,2}) = \sqrt{\lfloor n^2/4\rfloor}$.
Therefore, it follows that $G=T_{n,2}$, a contradiction. 
By Lemma \ref{Lempty}, 
we now assume that $e(S) + e(T)=1$. 
Next, we divide the proof into two cases.

\medskip 
{\bf Case 1.} Assume that $n$ is even.

{\bf Subcase 1.1.}  $|S|= \frac{n}{2} -2$ and $|T|=\frac{n}{2} +2$. 
 If $e(S)=1$, then $G$ is a subgraph of $K_{\frac{n}{2} -2, \frac{n}{2} +2}^+$.  
Similarly, we get that 
$\lambda (K_{\frac{n}{2} -2, \frac{n}{2} +2}^+) $ is the largest root of 
\[  g_1(x)=x^3 -x^2 + 4 x- (n^2x)/4  + n^2/4 -n -8. \]
We can check that $g_1(\frac{n}{2}) = n-8 >0$ and 
$g_1'(x) \ge 0$ for every $x\ge \frac{n}{2}$. It follows that 
$ \lambda (K_{\frac{n}{2} -2, \frac{n}{2} +2}^+) < \frac{n}{2}$. 
If $e(T)=1$, then $G$ is a subgraph of $K_{\frac{n}{2} +2, \frac{n}{2} -2}^+$. By computation, we obtain that $\lambda (K_{\frac{n}{2} +2, \frac{n}{2} -2}^+) $ is the largest root of 
\[   g_2(x)= x^3 -x^2 + 4x - (n^2 x)/4 + n^2/4 -n.   \]
It is easy to verify that $g_2(\frac{n}{2}) = n >0$ and $g_2'(x) \ge 0$ 
for $x\ge \frac{n}{2}$. Thus, we have $\lambda (K_{\frac{n}{2} +2, \frac{n}{2} -2}^+) < \frac{n}{2} = \lambda (T_{n,2})$, a contradiction. 
Apart from the direct computation, 
there is another way to see that 
$\lambda (K_{\frac{n}{2} +2, \frac{n}{2} -2}^+) < \lambda (T_{n,2})$. 
Suppose in contrast that $\lambda (K_{\frac{n}{2} +2, \frac{n}{2} -2}^+) \ge \lambda (T_{n,2})$. 
Then Theorem \ref{thmNZ2021} implies that $K_{\frac{n}{2} +2, \frac{n}{2} -2}^+$ contains at least 
$\frac{n}{2} -1$ triangles, which is a contradiction immediately. 

In this subcase, we conclude that 
 either $\lambda (G) \le \lambda (K_{\frac{n}{2} +2, \frac{n}{2} -2}^+) < \lambda (T_{n,2}) $ or 
 $\lambda (G)\le  \lambda (K_{\frac{n}{2} -2, \frac{n}{2} +2}^+) < \lambda (T_{n,2}) $, which contradicts with the assumption. 

{\bf Subcase 1.2.}  $|S|= \frac{n}{2} -1$ and $|T|=\frac{n}{2} +1$.  
If $e(S)=1$, then $G$ is a subgraph of $K_{\frac{n}{2} -1, \frac{n}{2} +1}^+$. Since $G$ has at most $n-2$ triangular edges, 
and $K_{\frac{n}{2} -1, \frac{n}{2} +1}^+$ has $2|T|+1=n+3$ triangular edges. 
Therefore, we must destroy at least $5$ triangular edges from 
$K_{\frac{n}{2} -1, \frac{n}{2} +1}^+$ to obtain the graph $G$. 
Consequently, the deleted triangular edges are incident to at least $3$ vertices of $T$. 
Then $G$ has at most $|T| -3 = \frac{n}{2}-2$ triangles, a contradiction.  
If $e(T)=1$, then 
$G$ is a subgraph of $K_{\frac{n}{2} +1, \frac{n}{2} -1}^+$. 
Observe that $K_{\frac{n}{2} +1, \frac{n}{2} -1}^+$ has $n-1$ triangular edges. We must delete at least one triangular edge of $K_{\frac{n}{2} +1, \frac{n}{2} -1}^+$ to obtain $G$. 
It follows that $G$ has at most $\frac{n}{2} -2$ 
 triangles, a contradiction.

{\bf Subcase 1.3.}  $|S|= \frac{n}{2} $ and $|T|=\frac{n}{2} $.  
In this situation, 
we may assume by the symmetry that $e(S)=1$. 
Then $G$ is a subgraph of $K_{\frac{n}{2} , \frac{n}{2}}^+$. 
Recall that $G$ has at most $n-2$ triangular edges, and 
$K_{\frac{n}{2} , \frac{n}{2}}^+$ has exactly $n+1$ triangular edges. 
To obtain the graph $G$,  we need to destroy at least $3$ triangular edges 
 from $K_{\frac{n}{2} , \frac{n}{2}}^+$.  
 Consequently, $G$ has at most $\frac{n}{2} -2$ triangles, 
 which is a contradiction.

\medskip 
{\bf Case 2.} Suppose that $n$ is odd. In this case, by assumption, 
we know that 
$G$ contains at least $\frac{n-3}{2}$ triangles 
and $G$ has at most $n-3$ triangular edges.  

{\bf Subcase 2.1.}  $|S|= \frac{n-3}{2} $ and $|T|=\frac{n+3}{2}$. 
If $e(S)=1$, then $G$ is a subgraph of $K_{\frac{n-3}{2} , \frac{n+3}{2} }^+$. 
Notice that $K_{\frac{n-3}{2} , \frac{n+3}{2} }^+$ has exactly 
$n+4$ triangular edges. 
To obtain the graph $G$, we need to destroy 
at least $7$ triangular edges. 
Then we need to delete some triangular edges that 
are incident to at least $4$ vertices of $T$,  so 
$G$ has at most $|T|-4=\frac{n-5}{2} $ triangles, 
a contradiction. 
If $e(T)=1$, then $G$ is a subgraph of $K_{\frac{n+3}{2} , \frac{n-3}{2} }^+$. By computation, we obtain that 
$\lambda (K_{\frac{n+3}{2} , \frac{n-3}{2} }^+)$ is the largest root of 
\[ g_3(x)=x^3 - x^2 + (9 x)/4 - (x n^2)/4 + n^2/4 - n + 3/4.  \]
It is easy to check that $g_3(\frac{1}{2}\sqrt{n^2-1})= 
1 - n + \sqrt{ n^2-1}>0$. 
Moreover, we have $g_3'(x)=3x^2 -2x + 9/4 - n^2/4$. 
We can verify that $g_3'(x)>0$ for any $x> \frac{1}{2}\sqrt{n^2-1}$, 
which yields $ 
\lambda (G)\le \lambda (K_{\frac{n+3}{2} , \frac{n-3}{2} }^+) <  \frac{1}{2}\sqrt{n^2-1} = \lambda (T_{n,2})$, a contradiction. 

{\bf Subcase 2.2.}  $|S|= \frac{n-1}{2} $ and $|T|=\frac{n+1}{2}$. 
If $e(S)=1$, then $G$ is a subgraph of $K_{\frac{n-1}{2} , \frac{n+1}{2} }^+$. Since $K_{\frac{n-1}{2} , \frac{n+1}{2} }^+$ has exactly 
$n+2$ triangular edges, we must destroy at least 
$5$ triangular edges to obtain the graph $G$. 
So the deleted triangular edges are incident to at least $3$ vertices of $T$, and $G$ contains at most $|T|-3=\frac{n-5}{2}$ triangles, 
a contradiction. 
If $e(T)=1$, then $G$ is a subgraph of $K_{\frac{n+1}{2} , \frac{n-1}{2} }^+$. As  $K_{\frac{n+1}{2} , \frac{n-1}{2} }^+$ has 
$n$ triangular edges, we need to destroy at least 
$3$ triangular edges to produce  $G$. 
In this process, 
at least two triangles of $K_{\frac{n+1}{2} , \frac{n-1}{2} }^+$ 
are removed, so $G$ has at most $|S|-2=\frac{n-5}{2}$ triangles, 
which is a contradiction.  
\end{proof}

\noindent 
{\bf Remark.} 
In the above proof,  we can determine 
the extremal graphs $G$ in the sense that
 $\lambda (G)\ge \lambda (T_{n,2})$, $G\neq T_{n,2}$ and $G$ has exactly $2\lfloor \frac{n}{2}\rfloor -1$ triangular edges. 
Indeed, we next give the sketch without details. 

\medskip  
In Subcase 1.1, it was proved that 
$\lambda (G) < \lambda (T_{n,2})$, a contradiction.

In Subcase 1.2, 
as we know, $G$ has exactly $n-1$ triangular edges. 
If $e(S)=1$,   
then $G$ is obtained  
from $K_{\frac{n}{2}-1,\frac{n}{2}+1}^+$ 
by deleting at least two triangular edges that incident to 
two vertices of $T$. 
In this deletion, we destroy 
four triangular edges of $K_{\frac{n}{2}-1,\frac{n}{2}+1}^+$. 
More precisely, let $\{u,v\}$ be the unique edge of $G[S]$. Then we can delete two triangular edges 
that intersect in $u$, or delete two disjoint triangular edges incident to $u$ and $v$, respectively. 
In each case, we can compute that 
the resulting graphs have spectral radius less than $\lambda (T_{n,2})$.  
If $e(T)=1$, then $G$ is a subgraph of $K_{\frac{n}{2} +1, \frac{n}{2} -1}^+$. 
Note that we cannot delete any triangular edges to obtain $G$.  
Moreover, we can verify that 
the deletion of a non-triangular edge
leads to a graph with spectral radius less than $\lambda (T_{n,2})$. 
So we have $G=K_{\frac{n}{2} +1, \frac{n}{2} -1}^+$. 
In addition, Lemma \ref{lem-pm-1} gives 
 $\lambda (K_{\frac{n}{2} +1, \frac{n}{2} -1}^+) > \lambda (T_{n,2})$. 
Thus $K_{\frac{n}{2} +1, \frac{n}{2} -1}^+$ is 
one of the extremal graphs.  

In Subcase 1.3, 
$G$ is obtained from $K_{\frac{n}{2} , \frac{n}{2}}^+$ by 
deleting at least one triangular edge. 
So $G$ is a subgraph of $K_{\frac{n}{2} ,  \frac{n}{2} }^{+|}$. 
 By calculation,  deleting any edge from $K_{\frac{n}{2} ,  \frac{n}{2} }^{+|}$ yields a graph with spectral radius 
 less than $\lambda (T_{n,2})$. 
 Then we must have $G=K_{\frac{n}{2} ,  \frac{n}{2} }^{+|}$. 
 From Lemma \ref{lem-equal-pm}, 
we get 
$\lambda (K_{\frac{n}{2} ,  \frac{n}{2} }^{+|}) > \lambda (T_{n,2})$. So $K_{\frac{n}{2} ,  \frac{n}{2} }^{+\,|}$ is the second  extremal graph. 

\medskip 
In Subcase 2.1, $G$ has exactly $n-2$ triangular edges. 
If $e(S)=1$, then $G$ is 
a subgraph of $K_{\frac{n-3}{2} , \frac{n+3}{2} }^+$ 
by deleting at least three triangular edges incident to 
three vertices of $T$. 
For example, let $\{u,v\}$ be the unique edge of $G[S]$. 
 We can delete three triangular edges that intersect in $u$, 
or we delete two triangular edges incident to $u$, 
and one triangular edge incident to $v$. 
In the two cases, 
the resulting subgraphs have spectral radius less than 
$\lambda (T_{n,2})$. 
If $e(T)=1$, then $G$ is a subgraph of 
$K_{\frac{n+3}{2} , \frac{n-3}{2} }^+$. In the previous proof, 
we have shown that $ \lambda (G)< \lambda (T_{n,2})$, a contradiction.

In Subcase 2.2, 
if $e(S)=1$, then $G$ is a subgraph of $K_{\frac{n-1}{2} , \frac{n+1}{2} }^+$. 
Note that $K_{\frac{n-1}{2} , \frac{n+1}{2} }^+$ contains 
$n+2$ triangular edges. We need to delete at least 
two triangular edges incident to two vertices of $T$. 
Let $\{u,v\}$ be the unique edge 
of $G[S]$. Then 
$G$ can be obtained by deleting two triangular edges 
that intersect in $u$, or 
deleting two disjoint triangular edges incident to $u$ and $v$, 
respectively. 
 In both cases, 
 we can check that the resulting graphs have spectral radius less than $\lambda (T_{n,2})$.  
If $e(T)=1$, then $G$ is a subgraph of 
$K_{\frac{n+1}{2} , \frac{n-1}{2} }^{+\,|}$. 
We can calculate that any proper subgraph has spectral radius less than $\lambda (T_{n,2})$. 
Moreover, 
 Lemma \ref{odd-n-pm-one} tells us that 
$\lambda (K_{\frac{n+1}{2} , \frac{n-1}{2} }^{+\,|}) 
> \lambda (T_{n,2})$, so $K_{\frac{n+1}{2} , \frac{n-1}{2} }^{+\,|}$  is the third extremal graph.

\section{Proof of Theorem \ref{thm-main3}}

\label{sec6}

Using a similar argument, 
we can prove Theorem \ref{thm-main3}. 

\begin{proof}[{\bf Proof of Theorem \ref{thm-main3}}]
Let  $G$ be an $n$-vertex graph with 
$\lambda (G) \ge \lambda (K_{\lceil \frac{n}{2}\rceil , \lfloor \frac{n}{2}\rfloor}^+)$ and 
$G$ has at most $2 \lfloor n/2 \rfloor +1$ triangular edges.  
We shall show that $G=K_{\lceil \frac{n}{2}\rceil , \lfloor \frac{n}{2}\rfloor}^+$.  
First of all, 
we know from Theorem \ref{STlambdarefine} that
 $G$ is an almost balanced complete bipartite graph. 
 More precisely, we have $e(G)\ge \lfloor {n^2}/{4} \rfloor -3 $, and $G$ admits a partition $V(G)=S\cup T$ such that  
$e(S,T)\ge \lfloor n^2/4 \rfloor -4$ and 
$n/2 -2\le |S|,|T| \le n/2 +2$.   
 If $e(S) + e(T)=0$,  
then $G$ is a bipartite graph with color classes $S$ and $T$. 
Consequently, we get
 $\lambda (G)
\le \lambda (T_{n,2}) < \lambda (K_{\lceil \frac{n}{2}\rceil , \lfloor \frac{n}{2}\rfloor}^+)$, which contradicts with 
the assumption. 
By Lemma \ref{Lempty}, 
we  have $e(S) + e(T)=1$. 
In what follows, we divide the proof into two cases.

\medskip 
{\bf Case 1.} Assume that $n$ is even. 

\medskip 
{\bf Subcase 1.1.}  $|S|= \frac{n}{2} -2$ and $|T|=\frac{n}{2} +2$. 
If $e(T)=1$, then $G$ is a subgraph of $K_{\frac{n}{2} +2, \frac{n}{2} -2}^+$. In the proof of 
Theorem \ref{thm-main} for Subcase 1.1,  
we have shown that 
$\lambda (K_{\frac{n}{2} +2, \frac{n}{2} -2}^+) < \frac{n}{2}$, a contradiction. 
 If $e(S)=1$, then $G$ is a subgraph of $K_{\frac{n}{2} -2, \frac{n}{2} +2}^+$.  We also showed that 
 $\lambda (G)\le  \lambda (K_{\frac{n}{2} -2, \frac{n}{2} +2}^+) < \frac{n}{2}$, which contradicts with the assumption. 

\medskip 
{\bf Subcase 1.2.}  $|S|= \frac{n}{2} -1$ and $|T|=\frac{n}{2} +1$.  
If $e(S)=1$, then $G$ is a subgraph of $K_{\frac{n}{2} -1, \frac{n}{2} +1}^+$. Similarly, we can show that 
\[ \lambda (K_{\frac{n}{2} -1, \frac{n}{2} +1}^+) < 
\lambda (K_{\frac{n}{2} , \frac{n}{2} }^+). \]  
Indeed, $\lambda (K_{\frac{n}{2} -1, \frac{n}{2} +1}^+)$ 
is the largest root of 
\[  h_1(x)= -3 - n + n^2/4 + x - (n^2 x)/4 - x^2 + x^3. \]
Recall in Lemma \ref{lem-21} that 
$\lambda (K_{\frac{n}{2} , \frac{n}{2} }^+)$ is the largest root of 
\[ f(x) =-n + n^2/4 - (n^2 x)/4 - x^2 + x^3.  \]
Observe that $h_1(x) - f(x) = x-3 >0$ for every $x>3$. 
Then we have $h_1(x) > f(x) \ge 0$ for any 
$x\ge \lambda (K_{\frac{n}{2} , \frac{n}{2} }^+)$, 
which implies $\lambda (K_{\frac{n}{2} -1, \frac{n}{2} +1}^+) < 
\lambda (K_{\frac{n}{2} , \frac{n}{2} }^+)$, as needed.

If $e(T)=1$, then 
$G$ is a subgraph of $K_{\frac{n}{2} +1, \frac{n}{2} -1}^+$. 
We can prove that 
\[  \lambda (K_{\frac{n}{2} +1, \frac{n}{2} -1}^+) < \lambda (K_{\frac{n}{2}, \frac{n}{2} }^+). \]
Indeed, since $\lambda (K_{\frac{n}{2} +1, \frac{n}{2} -1}^+)$ 
 is the largest root of 
 \[ h_2(x) = 1 - n + n^2/4 + x - (n^2 x)/4 - x^2 + x^3,  \]
 and $h_2(x) > f(x)$ for any 
 $x>0$, which yields $\lambda (K_{\frac{n}{2} +1, \frac{n}{2} -1}^+) < \lambda (K_{\frac{n}{2}, \frac{n}{2} }^+)$.

\medskip  
{\bf Subcase 1.3.}  $|S|= \frac{n}{2} $ and $|T|=\frac{n}{2} $.  
By the symmetry, 
we may assume that $e(S)=1$. 
Then $G$ is a subgraph of $K_{\frac{n}{2} , \frac{n}{2}}^+$. 
Since $\lambda (G) \ge \lambda (K_{\frac{n}{2}, 
\frac{n}{2}}^+)$, we get $G= K_{\frac{n}{2} , \frac{n}{2}}^+$, 
which is the desired extremal graph.

\medskip 
{\bf Case 2.} Suppose that $n$ is odd. 

\medskip 
{\bf Subcase 2.1.}  $|S|= \frac{n-3}{2} $ and $|T|=\frac{n+3}{2}$. 
If $e(S)=1$, then $G$ is a subgraph of $K_{\frac{n-3}{2} , \frac{n+3}{2} }^+$. 
By calculation, we obtain that  $\lambda (K_{\frac{n-3}{2} , \frac{n+3}{2} }^+)$ is the largest root of 
\[ h_3(x) =-(21/4) - n + n^2/4 + (9 x)/4 - (n^2 x)/4 - x^2 + x^3. \]
By Lemma  \ref{lem-21}, 
we know that $\lambda(K_{\frac{n+1}{2}, \frac{n-1}{2}}^+)$
is the largest root of
\[ g(x)=3/4 - n + n^2/4 + x/4 - (n^2 x)/4 - x^2 + x^3. \]
Since $h_3(x)- g(x) = 2x-6$, we get 
$h_3(x) > g(x)$ for any $x>3$, so it follows that 
$\lambda (K_{\frac{n-3}{2} , \frac{n+3}{2} }^+) < 
\lambda(K_{\frac{n+1}{2}, \frac{n-1}{2}}^+)$, a contradiction.

If $e(T)=1$, then $G$ is a subgraph of $K_{\frac{n+3}{2} , \frac{n-3}{2} }^+$. By computation, we obtain that 
$\lambda (K_{\frac{n+3}{2} , \frac{n-3}{2} }^+)$ is the largest root of 
\[ h_4(x)=3/4 - n + n^2/4 + (9 x)/4 - (n^2 x)/4 - x^2 + x^3.  \]
It is easy to check that $h_4(x) > g(x)$ for any $x>0$. 
So we have $\lambda (K_{\frac{n+3}{2} , \frac{n-3}{2} }^+) < 
\lambda (K_{\frac{n+1}{2} , \frac{n-1}{2} }^+)$, 
which contradicts with the assumption on $G$. 

\medskip 
{\bf Subcase 2.2.}  $|S|= \frac{n-1}{2} $ and $|T|=\frac{n+1}{2}$. 
If $e(S)=1$, then $G$ is a subgraph of $K_{\frac{n-1}{2} , \frac{n+1}{2} }^+$. Since $G$ has at most $n$ triangular edges and $K_{\frac{n-1}{2} , \frac{n+1}{2} }^+$ has exactly $n+2$ triangular edges, 
we must destroy at least 
two triangular edges to obtain the subgraph $G$. 
Let $K_{\frac{n-1}{2} , \frac{n+1}{2} }^{+\,|}$ 
be the graph obtained from 
$K_{\frac{n-1}{2} , \frac{n+1}{2} }^+$ by deleting an edge between $S$ and $T$ such that this edge is incident to 
the unique edge of $G[S]$. 
Furthermore, it follows that 
$G$ is a subgraph of $K_{\frac{n-1}{2} , \frac{n+1}{2} }^{+\,|}$.  
In this case, we can show that 
\[  \lambda (K_{\frac{n-1}{2} , \frac{n+1}{2} }^{+\, |}) < \frac{n}{2} 
< \lambda (K_{\frac{n+1}{2} , \frac{n-1}{2} }^{+}) .  \]
Indeed, since the spectral radius of $K_{\frac{n-1}{2} , \frac{n+1}{2} }^{+\, |}$ 
is the largest root of 
\[ h_5(x) = -(x/2) - 2 n x + (n^2 x)/2 + x^2 - n x^2 + x^3/4 - (n^2 x^3)/4 + x^5,  \]
and $h_5(\frac{n}{2})=  (-8 n - 24 n^2 + n^3)/32 >0$
for $n\ge 25$. Moreover, we can check that 
$h_5'(x) >0$ for every $x\ge \frac{n}{2}$. So it yields 
$\lambda (K_{\frac{n-1}{2} , \frac{n+1}{2} }^{+\, |}) < \frac{n}{2} < 
 \lambda (K_{\frac{n+1}{2} , \frac{n-1}{2} }^{+})$ by Lemma \ref{lem-21}.  
This is a contradiction. 

If $e(T)=1$, then $G$ is a subgraph of $K_{\frac{n+1}{2} , \frac{n-1}{2} }^+$. The assumption asserts that $\lambda (G) 
\ge \lambda (K_{\frac{n+1}{2} , \frac{n-1}{2} }^+)$, 
so we get $G=K_{\frac{n+1}{2} , \frac{n-1}{2} }^+$, 
which is the expected extremal graph. 
\end{proof}

\section{Concluding remarks}

\label{sec7}

In this paper, 
we have bounded the minimum number of triangular edges 
of a graph in terms of the spectral radius, 
and we have established a spectral Erd\H{o}s--Faudree--Rousseau theorem. 
The main ideas in our proof 
attribute to the supersaturation-stability (Theorem \ref{thm-far-bipartite}) and some additional spectral techniques. 
We believe that this method may have the potential to be applied to a wider range of spectral extremal graph problems.

\subsection{Supersaturation-stability via spectral radius}
We stated in Subsection \ref{sec4-3} that 
Theorem \ref{thm-far-bipartite} can deduce a conjecture 
of Erd\H{o}s involving the booksize of a graph. 
It is worth mentioning that an interesting spectral problem 
of Zhai and Lin \cite[Problem 1.2]{ZL2022jgt} asserts that 
every $n$-vertex graph $G$ with 
$\lambda (G) > \lambda (T_{n,2})$ has booksize greater than $n/6$ as well. 
To solve this problem, 
it is sufficient to show a spectral version of 
Theorem \ref{thm-far-bipartite}. 
For the sake of formality, we propose the following conjecture. 

\begin{conjecture} \label{conj-spec-far}
If $G$ is $t$-far from being bipartite, then 
\[  t(G) \ge \frac{n}{6}\Bigl( \lambda(G) + t - \lambda (T_{n,2}) \Bigr). \]
\end{conjecture}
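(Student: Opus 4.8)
The plan is to prove a Perron‑weighted version of the local argument behind Theorem~\ref{thm-far-bipartite}. We may assume that $t$ equals the exact edit distance of $G$ to bipartiteness (taking $t$ maximal only makes the target larger), so that $G$ has a bipartition with at least $e(G)-t$ crossing edges and, since no cut exceeds $\lfloor n^2/4\rfloor$, also $e(G)\le\lfloor n^2/4\rfloor+t$. Let $\mathbf{x}=(x_v)_{v\in V(G)}$ be a Perron eigenvector of $G$ normalized so that $\max_v x_v=1$, and put $\|\mathbf{x}\|_1=\sum_v x_v$. For every vertex $v$ the bipartition $\bigl(N(v),\,V(G)\setminus N(v)\bigr)$ already certifies $e(N(v))+e\bigl(V(G)\setminus N(v)\bigr)\ge t$. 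Summing this inequality over $v$ with weight $x_v$ (instead of weight $1$, which reproduces the proof of Theorem~\ref{thm-far-bipartite}) and simplifying both sides with the eigen‑equation $\lambda(G)x_v=\sum_{w\sim v}x_w$, the identity $\sum_v d(v)x_v=\lambda(G)\|\mathbf{x}\|_1$, and $\sum_v x_v\,e(N(v))=\sigma$, where $\sigma:=\sum(x_u+x_v+x_w)$ ranges over all triangles $uvw$ of $G$, a short calculation should yield
\[
2\sigma\ \ge\ \bigl(\lambda(G)^2-e(G)+t\bigr)\,\|\mathbf{x}\|_1 ,
\]
and therefore, as $x_v\le 1$ gives $\sigma\le 3t(G)$,
\[
6\,t(G)\ \ge\ \bigl(\lambda(G)^2-e(G)+t\bigr)\,\|\mathbf{x}\|_1 .
\]
This is the precise spectral mirror of the estimate $6t(G)\ge n\bigl(e(G)+t-n^2/4\bigr)$ underlying Theorem~\ref{thm-far-bipartite}.

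Second, I would convert the quadratic‑scale quantity $\lambda(G)^2-e(G)$ into the linear‑scale quantity $\lambda(G)-\lambda(T_{n,2})$, splitting into two regimes. When $\lambda(G)$ is bounded away from $\lambda(T_{n,2})$ from below, the crude inequalities $\lambda(G)\le\sqrt{2e(G)}$ and $e(G)\le\lfloor n^2/4\rfloor+t$, fed into Theorem~\ref{thm-far-bipartite} directly, already leave ample room. When $\lambda(G)$ is close to $\lambda(T_{n,2})\approx n/2$, I would combine the weighted inequality above with Lemma~\ref{thm-BN-CFTZ-NZ} in the form $e(G)\ge\lambda(G)^2-3t(G)/\lambda(G)$ and with a lower bound $\|\mathbf{x}\|_1\ge n-o(n)$; in this regime $G$ is forced to be an almost balanced complete bipartite graph (in the spirit of Theorem~\ref{STlambdarefine}), so its Perron vector is nearly flat, and one bootstraps $\|\mathbf{x}\|_1\ge n-o(n)$ from the trivial bound $\|\mathbf{x}\|_1\ge\lambda(G)$ by the spectral technique of Cioab\u{a}, Feng, Tait and Zhang.

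The step I expect to be the main obstacle is exactly this conversion. The weighted inequality is inherently of quadratic scale in $\lambda(G)$, so the linear target $\tfrac{n}{6}\bigl(\lambda(G)+t-\lambda(T_{n,2})\bigr)$ is only reached once the Perron vector is genuinely close to uniform, making both $\|\mathbf{x}\|_1\approx n$ and the slack in $\sigma\le 3t(G)$ small. Bounding $\|\mathbf{x}\|_1$ from below in terms of $n$ under only the hypothesis that $G$ is $t$-far from being bipartite is delicate; graphs that are far from bipartite yet whose Perron vector concentrates on few vertices (windmills and friendship graphs being the natural test cases) push the inequality to its boundary, so a fully general argument will very likely need the extra hypothesis $\lambda(G)\ge\lambda(T_{n,2})$ — which is anyway the form needed for the intended application to the book problem of Zhai and Lin. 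Removing that assumption would require a genuinely new idea for controlling the $\ell_1$-norm of the Perron vector.
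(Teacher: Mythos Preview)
The statement you are trying to prove is labelled a \textbf{Conjecture} in the paper and is left open; there is no proof in the paper to compare your attempt against.

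Your Perron--weighted inequality is correct and rather elegant. Multiplying $e(N_v)+e(N_v^c)\ge t$ by $x_v$, summing, and using $\sum_v x_v\sum_{w\sim v}d(w)=\lambda^2\|\mathbf{x}\|_1$ together with $\sum_v x_v\,e(N_v)=\sigma$ indeed gives
\[
  2\sigma \;\ge\; \bigl(\lambda(G)^2 - e(G) + t\bigr)\,\|\mathbf{x}\|_1,
\]
and hence $6\,t(G)\ge(\lambda^2-m+t)\|\mathbf{x}\|_1$. In fact this inequality is \emph{tight} for the friendship graph $F_k$: one checks $\lambda^2-m+t=\lambda$, $\|\mathbf{x}\|_1=1+\lambda$, and $2\sigma=2k+2\lambda$.

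The genuine gap is that your conversion step cannot be completed in general, because the conjecture as written is \emph{false}. Your own test case already disproves it. For $F_k$ on $n=2k+1$ vertices one has $t(F_k)=k$, the graph is exactly $k$-far from bipartite, $\lambda(F_k)=\tfrac12(1+\sqrt{1+8k})$, and $\lambda(T_{n,2})=\sqrt{k(k+1)}$. Taking $k=4$ gives
\[
  t(F_4)=4 \qquad\text{while}\qquad \frac{9}{6}\Bigl(\tfrac{1+\sqrt{33}}{2}+4-\sqrt{20}\Bigr)\approx 4.35,
\]
and the discrepancy grows with $k$ (for $k=10$ the right-hand side is about $15.8$ against $t(F_{10})=10$). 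So friendship graphs do not merely ``push the inequality to its boundary''; they cross it. The mechanism is precisely the one you isolated: the Perron vector of $F_k$ concentrates at the hub, so $\|\mathbf{x}\|_1\sim\sqrt{2k}$ rather than $n$, and simultaneously $\sigma$ is well below $3\,t(G)$.

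Your instinct that the hypothesis $\lambda(G)\ge\lambda(T_{n,2})$ is needed is therefore not a matter of convenience but of truth: without it the statement fails. Since the paper's intended application (the remark following the conjecture) uses only that regime, a corrected conjecture restricted to $\lambda(G)\ge\lambda(T_{n,2})$ remains plausible and would still imply the book bound of Zhai and Lin; your weighted inequality looks like a natural starting point for that restricted version.
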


\noindent 
{\bf Remark.} 
Apart from being interesting on its own, we can see that, somewhat surprisingly, 
Conjecture \ref{conj-spec-far} in fact implies the aforementioned problem of Zhai and Lin \cite[Problem 1.2]{ZL2022jgt}. 
Indeed, suppose that $G$ is an $n$-vertex graph with $\lambda (G)> \lambda (T_{n,2})$, and 
$G$ contains exactly $t$ triangles.  
Then assuming Conjecture \ref{conj-spec-far}, we know that 
$G$ is not ${6t}/{n}$-far from being bipartite. 
So we can remove less than ${6t}/{n}$ edges from $G$ to destroy all $t$ triangles. 
Thus,  one of these edges is contained in more than $n/6$ triangles, as expected.

\medskip 
We have also proved in Subsection \ref{sec4-3}  that the spectral extremal result \cite[Theorem 2]{CFTZ20} for the friendship graph $F_k$ holds for every $n\ge (21k)^4$ by applying  Lemma \ref{lem-Fk}. We point out here that the constant factor can be slightly improved by a  result in \cite[Theorem 4]{Zhu2023}, which shows that for $n\ge 4k^3$,  every $n$-vertex $F_k$-free graph  contains less than $k^2n$ triangles. 
This leads to an improvement on the coefficients of Lemma \ref{lem-Fk} under the constraint $n\ge 4k^3$. 
 However, it seems difficult to improve the exponent of $k$. 
In the case of Tur\'{a}n number, 
 Erd\H{o}s,  F\"{u}redi, Gould and Gunderson \cite{Erdos95}  
 proved that for $k\ge 1$ and $n\ge 50k^2$, we have 
 \[ \mathrm{ex}(n, F_k)= \left\lfloor \frac {n^2}{4}\right \rfloor+ \left\{
  \begin{array}{ll}
   k^2-k, \quad~~  \mbox{if $k$ is odd;} \\
    k^2-\frac32 k, \quad \mbox{if $k$ is even}.
  \end{array}
\right. \] 
 Furthermore, they conjectured \cite[page 90]{Erdos95} 
 that the above result on $\mathrm{ex}(n,F_k)$ still holds for every $n\ge 4k$, rather than $n\ge 50k^2$. 
 This conjecture remains {\it unresolved}. 
 In the case of spectral radius,  
we may ask further that whether the spectral extremal result for $F_k$ still holds for every $n\ge Ck$ with an absolute constant $C>0$. 
We mention that finding (linear) sharp bounds on the order of graphs is also regarded as an interesting problem in extremal graph theory, we refer the readers to \cite{FG2015cpc, LN2021outplanar, ZL2022jgt} for some related results.

\subsection{Counting triangular edges} 
A well-known result of Nosal \cite{Nosal1970} 
(see, e.g., \cite{Niki2002cpc,Ning2017-ars}) 
asserts that if $G$ is a  graph with
$m$ edges and $\lambda (G) > \sqrt{m}$,
then it contains a triangle.
In 2023, Ning and Zhai \cite{NZ2021}
proved a counting result, which asserts that if
$  \lambda (G)\ge \sqrt{m}$, 
then $t(G)\ge \lfloor \frac{1}{2}(\sqrt{m}-1) \rfloor$, unless $G$ is a complete bipartite graph. 
Inspired by this result, 
we propose the following problem.

\begin{conjecture} \label{thm-main2}
If $G$ is a graph with $m$ edges and 
\[  \lambda (G) \ge \sqrt{m} , \]
then $G$ has at least $\sqrt{m}$ triangular edges, unless 
$G$ is a complete bipartite graph. 
\end{conjecture}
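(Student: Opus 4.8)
Suppose for contradiction that $G$ has fewer than $\sqrt m$ triangular edges and is not a complete bipartite graph; after deleting isolated vertices write $n=|V(G)|$ and $m=e(G)$, so $\lambda(G)\ge\sqrt m$. If $G$ is triangle-free then Nosal's theorem \cite{Nosal1970}, together with the characterisation of its equality case, forces $G$ to be complete bipartite, a contradiction; so $G$ contains a triangle. Two quick reductions come first. If $m\ge\lfloor n^2/4\rfloor+1$ then $e(G)>e(T_{n,2})$, and Theorem~\ref{thm-EFR} gives at least $2\lfloor n/2\rfloor+1$ triangular edges, which exceeds $\sqrt m$ since $m\le\binom n2<n^2/2$; so we may assume $m\le\lfloor n^2/4\rfloor$. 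And, exactly as in the proof of Theorem~\ref{thm-m}, the triangular edges span a subgraph $G'$ with $e(G')<\sqrt m$ and $t(G)=t(G')$, so Kruskal--Katona's theorem gives $t(G)<\binom{\sqrt{2\sqrt m}}{3}<\tfrac{\sqrt2}{3}m^{3/4}$, while Lemma~\ref{thm-BN-CFTZ-NZ} gives $t(G)\ge\tfrac{\sqrt m\,(\lambda^2-m)}{3}$; hence $\lambda^2<m+\sqrt2\,m^{1/4}$.

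The plan is to convert this near-equality into near-bipartiteness. Substituting $\lambda^2<m+\sqrt2\,m^{1/4}$ into the identity $6t(G)=2\lambda_1(\lambda_1^2-m)+\sum_{i\ge2}(\lambda_1+\lambda_i)\lambda_i^2\ge(\lambda_1+\lambda_n)(2m-\lambda_1^2)$ and using $t(G)<\tfrac{\sqrt2}{3}m^{3/4}$ once more yields $\lambda_1(G)+\lambda_n(G)=O(m^{-1/4})$ (with small values of $m$ checked directly), so $G$ is spectrally very close to bipartite while still $\lambda_1(G)^2\ge e(G)$. The crucial ingredient is then a stability version of Nosal's theorem — essentially the structural analysis behind Ning and Zhai's bound $t(G)\ge\lfloor(\sqrt m-1)/2\rfloor$ for graphs with $\lambda(G)\ge\sqrt m$ \cite{NZ2021} — which should produce a partition $V(G)=S\cup T$ with $e(S)+e(T)=o(\sqrt m)$, $e(S,T)\ge|S||T|-o(\sqrt m)$ and $|S||T|=m+o(m)$; in words, $G$ is $K_{|S|,|T|}$ with $o(\sqrt m)$ edges added inside the parts and $o(\sqrt m)$ cross edges removed.

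Granting such a partition, I would finish in the spirit of Section~\ref{sec5}. A Perron-vector rotation argument — move a ``bad'' low-degree vertex onto one side, or exploit $G\neq K_{|S|,|T|}$ together with the near-completeness of $G[S,T]$ — should sharpen the partition to $e(S)+e(T)\le1$ with $G[S,T]$ missing only $O(1)$ edges, so that, writing $a=|S|\le|T|=b$, one has $ab=m+O(1)$. If $e(S)+e(T)=0$ then $G$ is bipartite and $\lambda(G)\le\sqrt{e(G)}=\sqrt m$ with equality only for complete bipartite graphs, contradicting $G\neq$ complete bipartite. Hence there is a single edge $uv$ inside a part, whose two ends have only $O(1)$ non-neighbours on the opposite side, so $uv$ lies in a book of size $b-O(1)$ if $uv\subseteq S$ and of size $a-O(1)$ if $uv\subseteq T$. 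In the first case we already obtain $2b-O(1)\ge 2\sqrt{ab}-O(1)>\sqrt m$ triangular edges. In the second case, if $b\le 4a$ then $2a-O(1)\ge\sqrt{ab}-O(1)$ again does it (the only genuine issue is that the $O(1)$ slack must not break the integer inequality — this is where the exact constant needs care), while if $b>4a$ then $G$ is a very unbalanced $K_{a,b}$ plus one edge minus $O(1)$ edges, and a direct eigenvalue computation — the generalisation of the fact $\lambda(K_{1,n-1}^+)<\sqrt n$ — gives $\lambda(G)<\sqrt m$, contrary to hypothesis.

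The main obstacle is the stability step. Theorem~\ref{thm-far-bipartite} is useless here: when $G$ is close to an unbalanced $K_{a,b}$ the edge count $m$ lies far below $n^2/4$, making that bound vacuous. One needs instead a spectral stability version of Nosal's theorem that remains effective for sparse hosts and is precise enough to leave only $O(1)$ edits — the analogue, for this non-fixed-order problem, of Theorem~\ref{STlambdarefine} — together with the elementary but delicate eigenvalue estimates that eliminate the unbalanced cases and recover the exact constant $\sqrt m$. (The extremal graphs should turn out to be those $K_{a,b}^{+}$, with the extra edge placed in the larger part, for which $\lambda(K_{a,b}^+)$ is only just above $\sqrt{ab+1}$, i.e.\ roughly $a\approx 4b$.)
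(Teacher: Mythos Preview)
The statement you are trying to prove is labelled \textbf{Conjecture}~\ref{thm-main2} in the paper, and the paper offers no proof; it is posed there as an open problem in the concluding remarks, immediately after the related but weaker supersaturation bound of Theorem~\ref{thm-m}. So there is no ``paper's own proof'' to compare against.

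That said, your outline is a sensible plan of attack and your preliminary estimates are correct: the Kruskal--Katona bound $t(G)<\tfrac{\sqrt2}{3}m^{3/4}$, the consequence $\lambda^2<m+\sqrt2\,m^{1/4}$ via Lemma~\ref{thm-BN-CFTZ-NZ}, and the deduction $\lambda_1+\lambda_n=O(m^{-1/4})$ from the identity $6t(G)=2\lambda_1(\lambda_1^2-m)+\sum_{i\ge2}(\lambda_1+\lambda_i)\lambda_i^2$ are all fine. You are also right that Theorem~\ref{thm-far-bipartite} is useless here because the target graph can be a very unbalanced $K_{a,b}$ with $m\ll n^2/4$.

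The genuine gap is exactly where you place it: you need a \emph{stability version of Nosal's theorem} that, from $\lambda_1+\lambda_n=o(1)$ (or from $\lambda^2-m=o(m^{1/4})$), extracts a bipartition with only $o(\sqrt m)$ internal edges and $o(\sqrt m)$ missing cross edges. No such statement is proved in the paper, and the Ning--Zhai argument you allude to does not, as written, yield this level of structural control in the unbalanced regime. Until that step is supplied, the argument is a programme rather than a proof; in particular, the Perron-rotation endgame you sketch (patterned on Section~\ref{sec5}) presupposes a partition with $O(1)$ defects, which is strictly stronger than anything your spectral inequalities currently deliver. Your identification of the likely extremal configurations ($K_{a,b}^+$ with the extra edge in the larger part, near the threshold $\min\{a,b\}\approx\tfrac14\max\{a,b\}$) also remains heuristic and would need its own eigenvalue computation once the stability step is in hand.
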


\medskip 
In what follows, 
we shall conclude some problems 
concerning the minimum number of edges that occur in cliques or odd cycles. 
Motivated by the study on the minimum number of 
 triangular edges
 among graphs with $n$ vertices and $m\ge n^2/4 +q $ edges, 
 we may ask the following conjecture, 
 which provides a spectral version of 
 Theorem \ref{thm-GL}. 
 
 \begin{conjecture}  
For any graph $G$ on  $n$ vertices, there exists 
an $n$-vertex graph $H=G(a,b,c)$  for some integers $a,b,c$ such that $\lambda (H)\ge \lambda(G)$  and $|\mathbf{NT}(H)| \ge |\mathbf{NT}(G)|$. 
\end{conjecture}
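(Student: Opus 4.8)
The plan is to mirror the structure of the proofs of Theorems~\ref{thm-main} and~\ref{thm-main3}, but with the fixed spectral threshold replaced by a variable one. First I would dispose of the bipartite case: if $G$ is bipartite then every edge is non-triangular, so $|\mathbf{NT}(G)|=e(G)\le \lfloor n^2/4\rfloor$, and since $\lambda(G)\le\sqrt{\lfloor n^2/4\rfloor}=\lambda(T_{n,2})$ for any $n$-vertex bipartite graph, the choice $H=T_{n,2}=G\bigl(0,\lceil n/2\rceil,\lfloor n/2\rfloor\bigr)$ already works. So assume $G$ is not bipartite; put $\ell=|\mathbf{NT}(G)|$ and recall that the non-triangular edges always span a triangle-free subgraph, so $\ell\le\lfloor n^2/4\rfloor$, with equality only for $G=T_{n,2}$. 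Among all $n$-vertex graphs with at least $\ell$ non-triangular edges, let $G^{*}$ be one of maximum spectral radius; then $\lambda(G^{*})\ge\lambda(G)$, and it suffices either to show that $G^{*}$ may be taken of the form $G(a,b,c)$, or, more modestly, to exhibit some $G(a,b,c)$ with $bc\ge\ell$ and $\lambda\bigl(G(a,b,c)\bigr)\ge\lambda(G^{*})$.

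Next I would run a stability analysis on the extremal graph $G^{*}$. Writing $m^{*}=e(G^{*})$ and $t^{*}=t(G^{*})$, Kruskal--Katona applied to the triangular subgraph (which has $m^{*}-\ell$ edges) bounds $t^{*}$ in terms of $m^{*}-\ell$, and Lemma~\ref{thm-BN-CFTZ-NZ} then forces a lower bound on $m^{*}$ in terms of $\lambda(G^{*})$; Theorem~\ref{thm-far-bipartite} shows that $G^{*}$ cannot be far from bipartite, yielding a partition $V(G^{*})=S\cup T$ with few edges inside the parts. Following the Perron-weight arguments of Section~\ref{sec5} — bounding low-degree vertices, bounding vertices with many neighbours inside their own part, and using the maximality of $\lambda(G^{*})$ to relocate misplaced vertices onto an independent set of large Perron weight — one would upgrade this to a decomposition $V(G^{*})=A^{*}\cup B^{*}\cup C^{*}$ in which $B^{*}$ is independent and joined completely to $A^{*}\cup C^{*}$, $C^{*}$ is independent, and $A^{*}$ is almost a clique, so that $G^{*}$ is within $O(1)$ edges of some $G(a,b,c)$. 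A final local switching/symmetrization step would then be needed: each operation that turns an ``error edge'' into the correct kind of edge (moving a vertex among $A^{*},B^{*},C^{*}$, or completing/deleting an edge inside $A^{*}$) must be shown to neither decrease $\lambda$ nor decrease the number of $B^{*}$--$C^{*}$ edges, which are exactly the non-triangular edges of the target graph. Once $G^{*}=G(a,b,c)$, one optimizes the characteristic cubic $\lambda^{3}-(a-1)\lambda^{2}-(ab+bc)\lambda+(a-1)bc=0$ over admissible $(a,b,c)$ with $bc\ge\ell$ to conclude.

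I expect the main obstacle to be the intermediate range of $\ell$, together with a genuine tension inside the switching step: the operations that raise $\lambda$ (adding edges, balancing the parts) tend to destroy non-triangular edges, whereas the operations that create non-triangular edges (thinning the clique $A^{*}$, shifting weight toward $C^{*}$) tend to lower $\lambda$, so neither a pure spectral extremal argument nor the Zykov-type symmetrization of F\"{u}redi--Maleki suffices on its own. A plausible resolution is a two-parameter weighting that tracks $\lambda(G^{*})$ and $|\mathbf{NT}(G^{*})|$ simultaneously, or a bootstrap from the edge version (Theorem~\ref{thm-GL}): one first passes to a graph $G(a,b,c)$ with $e\bigl(G(a,b,c)\bigr)\ge e(G^{*})$ and $bc\ge\ell$, and then argues that, \emph{within} the family $\{G(a,b,c)\}$, the edge count already controls the spectral radius tightly enough — via Lemma~\ref{thm-BN-CFTZ-NZ} and the explicit cubic above — that $\lambda\bigl(G(a,b,c)\bigr)\ge\lambda(G^{*})$ can be arranged. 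Making either route quantitative, and handling the non-bipartite small-$n$ boundary cases not covered by Theorem~\ref{thm-GL}, is where the real work lies.
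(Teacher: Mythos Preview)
The statement you are attempting to prove is labelled a \emph{Conjecture} in the paper, not a theorem; it appears in Section~\ref{sec7} (Concluding remarks) precisely because the authors do \emph{not} have a proof and are proposing it as an open problem. There is therefore no proof in the paper to compare your proposal against.

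What you have written is a reasonable research outline, and you have correctly identified the central obstruction yourself: the stability machinery of Section~\ref{sec5} is calibrated to the single threshold $\lambda(T_{n,2})$, and all the quantitative lemmas (Lemmas~\ref{lem-partition}--\ref{Lempty}) use that specific value to control $e(G)$, the bipartition error, and the sizes of $L$ and $W$. Once $\lambda(G)$ is allowed to range over all of $(0,n-1)$, those bounds evaporate; in particular, when $\lambda(G)$ is well below $n/2$ the supersaturation-stability step (Theorem~\ref{thm-far-bipartite}) gives no useful information, and the Perron-weight relocation argument of Lemma~\ref{Lempty} has no leverage. Your fallback --- bootstrap from the edge version (Theorem~\ref{thm-GL}) and then argue that within the family $\{G(a,b,c)\}$ the edge count controls $\lambda$ --- is exactly the step where the difficulty lives: the map $(a,b,c)\mapsto\lambda\bigl(G(a,b,c)\bigr)$ is not monotone in $e\bigl(G(a,b,c)\bigr)$ when $bc$ is held fixed, so ``more edges'' need not translate into ``larger spectral radius'' without further work. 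You flag this tension explicitly, which is honest, but it means your document is a proof \emph{plan} with an acknowledged gap rather than a proof; that is appropriate given that the statement is genuinely open.
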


 \subsection{Counting edges in cliques or odd cycles}
 
Recall that $T_{n,r}$ is the $r$-partite Tur\'{a}n graph on $n$ vertices. 
The famous Tur\'{a}n theorem  \cite[p. 294]{Bollobas78} 
states that an $n$-vertex  graph $G$ with 
   $ e(G)\ge e(T_{n,r})$ has a copy of $K_{r+1}$, unless  $G=T_{n,r}$. Correspondingly, 
   Nikiforov \cite{Niki2007laa2} showed that if 
   $\lambda (G) \ge \lambda (T_{n,r})$, then 
   $G$ contains a copy of $K_{r+1}$, unless $G=T_{n,r}$. 
   So it is natural to consider the following extension 
   by minimizing the number of edges that occur in $K_{r+1}$. 

\begin{problem} \label{prob-clique}
Suppose that $r\ge 3$ and 
$G$ is an $n$-vertex graph with 
$\lambda (G) > \lambda (T_{n,r})$. 
What is the smallest number of edges  of $G$ that 
are contained in  $K_{r+1}$?  
\end{problem}

\noindent 
{\bf Remark.} 
Inspired by Conjecture \ref{conj-FM} and Theorem 
\ref{thm-GL}, 
we believe intuitively that the spectral extremal graphs 
in Problem \ref{prob-clique} 
  are possibly analogues of graphs of the form $G(a,b,c)$, i.e., 
 they are perhaps constructed from 
 a complete $r$-partite graph of order $n$ by 
adding an almost complete graph to one of the vertex parts.

Apart from the number of triangular edges, 
Erd\H{o}s,  Faudree and Rousseau \cite{EFR1992} 
also considered the analogous problems 
for longer odd cycles in a graph of order $n$ with 
more than $\lfloor n^2/4 \rfloor$ edges. 
They proved that for any $k\ge 2$, every 
graph on $n$ vertices with $\lfloor n^2/4\rfloor +1$ 
edges contains at least $\frac{11}{144}n^2 - O(n)$ 
edges that are contained in an odd cycle $C_{2k+1}$.  
It turns out that the case $k\ge 2$ is quite different from 
the triangle case.  
Furthermore, 
Erd\H{o}s,  Faudree and Rousseau \cite{EFR1992} made 
a stronger conjecture, 
which asserts that all such graphs 
contain at least $\frac{2}{9}n^2 - O(n)$ edges that occur in $C_{2k+1}$. 
We remark that adding an extra edge 
to the complete balanced bipartite graph is not optimal.

In 2017, F\"{u}redi and Maleki \cite{FM2017}  
disproved this conjecture for $k=2$, and  
constructed $n$-vertex graphs with $\lfloor n^2/4 \rfloor +1$ 
edges and with only $ \frac{2+\sqrt{2}}{16} n^2 + O(n) \approx 
0.213 n^2$ edges in $C_5$. 
In 2019, Grzesik, Hu and Volec 
\cite{GHV2019} obtained 
asymptotically sharp bounds for the smallest possible
number of edges in $C_{2k+1}$. Using Razborov's  flag algebras method, they proved that if 
$G$ is an $n$-vertex graph with $\lfloor n^2/4 \rfloor +1$ edges, 
then it contains at least $\frac{2+\sqrt{2}}{16} n^2 - O(n^{15/8})$ 
edges that occur in $C_5$, 
and for $k\ge 3$, it contains at least $\frac{2}{9}n^2 - O(n)$ 
edges in $C_{2k+1}$. 
Motivated by these results, 
we may propose the following spectral problem. 

\begin{problem}
Let $G$ be a graph of order $n$ with $\lambda (G) > \lambda (T_{n,2})$. 
For each $k\ge 2$, what is the smallest number of edges of $G$ that 
occur in $C_{2k+1}$?  
\end{problem}

\section*{Acknowledgements}  
The authors would like to thank Xiaocong He and Loujun Yu 
for carefully reading an early manuscript of this paper. 
 The authors also show their great gratitude to anonymous referees for valuable suggestions, which considerably improve the presentation of the paper. 
Yongtao Li was supported by the Postdoctoral Fellowship Program of CPSF (No. GZC20233196), 
 Lihua Feng was supported by the National Natural Science Foundation of China (Nos. 12271527 and 12471022), and Yuejian Peng was supported by 
 the National Natural Science Foundation of Hunan Province (No. 2025JJ30003) and 
the National Natural Science Foundation of China (Nos. 11931002 and 12371327).

\frenchspacing

\end{document}